\definecolor{darkred}{rgb}{1,0,0} 
\definecolor{darkgreen}{rgb}{0,0.8,0}
\definecolor{darkblue}{rgb}{0,0,1}
\numberwithin{equation}{section}
\newtheoremstyle{personal}%
{12pt}
{12pt}
{\slshape}
{}
{\bfseries}
{.}
{.5em}
{}
\theoremstyle{personal}%
\newtheorem{thm}{Theorem}[section]
\newtheorem{cor}[thm]{Corollary}
\newtheorem{lem}[thm]{Lemma}
\newtheorem{prop}[thm]{Proposition}
\newtheorem{quest}{Question}[section]
\theoremstyle{definition}
\newtheorem{exm}[thm]{Example}
\DeclareRobustCommand{\llongtwoheadleftarrow}{\twoheadleftarrow\joinrel\relbar\joinrel\relbar}
\DeclareMathOperator*{\bssurjup}{\llongtwoheadleftarrow} 
\newcommand{\XX}{\mathcal{X}}
\newcommand{\Exp}{\mathrm{Exp}}
\newcommand{\dist}{\mathrm{dist}}
\newcommand{\CaP}{\mathrm{Ca}\PP}
\newcommand{\CP}{\C\PP}
\newcommand{\HP}{\mathds{H}\PP}
\newcommand{\Lie}{\mathcal{L}}
\newcommand{\HH}{\mathcal{H}}
\newcommand{\eh}{\ind_{\mathrm{EH}}}
\newcommand{\parity}{\mathrm{par}}
\newcommand{\Sp}{\mathrm{Sp}}
\newcommand{\fix}{\mathrm{fix}}
\newcommand{\cz}{\ind_{CZ}}
\newcommand{\pr}{\mathrm{pr}}
\newcommand{\A}{\mathcal{A}}
\newcommand{\fr}{\mathrm{ind_{FR}}}
\newcommand{\N}{\mathds{N}}
\newcommand{\Z}{\mathds{Z}}
\newcommand{\R}{\mathds{R}}
\newcommand{\Q}{\mathds{Q}}
\newcommand{\RP}{\mathds{R}\PP}
\newcommand{\PP}{\mathds{P}}
\newcommand{\C}{\mathds{C}}
\newcommand{\NN}{\mathcal{N}}
\newcommand{\WW}{\mathcal{W}}
\newcommand{\UU}{\mathcal{U}}
\newcommand{\KK}{\mathcal{K}}
\newcommand{\crit}{\mathrm{crit}}
\newcommand{\diff}{\mathrm{d}}
\newcommand{\ind}{\mathrm{ind}}
\newcommand{\id}{\mathrm{id}}
\newcommand{\nul}{\mathrm{nul}}
\newcommand{\sys}{\mathrm{sys}}
\newcommand{\injrad}{\mathrm{injrad}}
\newcommand{\Tan}{\mathrm{T}}
\DeclareRobustCommand{\llonghookrightarrow}{\lhook\joinrel\relbar\joinrel\relbar\joinrel\rightarrow}
\DeclareRobustCommand{\llongrightarrow}{\relbar\joinrel\relbar\joinrel\rightarrow}
\DeclareMathOperator{\supp}{\mathrm{supp}} 
\DeclareMathOperator*{\toup}{\longrightarrow} 
\DeclareMathOperator*{\ttoup}{\llongrightarrow} 
\DeclareMathOperator*{\eembup}{\llonghookrightarrow}
\begin{document}

\title[On the spectral characterization of Besse and Zoll Reeb flows]{On the spectral characterization\\ of Besse and Zoll Reeb flows}

\author{Viktor L.\ Ginzburg}
\address{Viktor Ginzburg\newline\indent Department of Mathematics, UC Santa Cruz, Santa Cruz, CA 95064, USA}
\email{ginzburg@ucsc.edu}

\author{Ba\c{s}ak Z.\ G\"urel}
\address{Ba\c{s}ak G\"urel\newline\indent Department of Mathematics, UCF, Orlando, FL 32816, USA}
\email{basak.gurel@ucf.edu}

\author{Marco Mazzucchelli}
\address{Marco Mazzucchelli\newline\indent CNRS, UMPA, \'Ecole Normale Sup\'erieure de Lyon, 69364 Lyon, France}
\email{marco.mazzucchelli@ens-lyon.fr}

\date{September 7, 2019. \emph{Revised}: March 23, 2020.}

\subjclass[2010]{53D10, 58E05, 53C22}

\keywords{Closed Reeb orbits, closed geodesics, Besse manifolds, Zoll manifolds}

\thanks{This work was partially supported by the NSF Grant DMS-1440140 via MSRI (BG and MM), the NSF CAREER award DMS-1454342 (BG), and  by Simons Foundation Collaboration Grant 581382 (VG)}

\begin{abstract}
A closed contact manifold is called Besse when all its Reeb orbits are closed, and Zoll when they have the same minimal period. In this paper, we provide a characterization of Besse contact forms  for convex contact spheres and Riemannian unit tangent bundles in terms of $S^1$-equivariant spectral invariants. Furthermore, for restricted contact type hypersurfaces of symplectic vector spaces, we give a sufficient condition for the Besse property via the Ekeland--Hofer capacities.

\tableofcontents
\end{abstract}

\maketitle

\vspace{-40pt}

\section{Introduction}
\label{s:introduction}

This work gives spectral characterizations of several classes of higher dimensional Besse and Zoll contact manifolds. Recall that a contact manifold is a pair $(\Sigma,\lambda)$, where $\Sigma$ is a manifold of dimension $2n-1$, and $\lambda$ is a 1-form on $\Sigma$ such that $\lambda\wedge(\diff\lambda)^{n-1}$ is nowhere vanishing. Throughout the paper, all contact manifolds are assumed to be closed and connected. The spectrum mentioned above is the one related to the Reeb dynamics. 
Namely, denote by $R$ the Reeb vector field on $(\Sigma,\lambda)$, defined by $\lambda(R)\equiv1$ and $\diff\lambda(R,\cdot)\equiv0$. 
The action spectrum $\sigma(\Sigma,\lambda)$ is the set of periods of the closed Reeb orbits, i.e., of the periodic orbits of the flow $\phi_R^t:\Sigma\to\Sigma$ of $R$. Here by period we mean any, not necessarily minimal, period of a Reeb orbit: a closed Reeb orbit with minimal period $\tau>0$ contributes to the spectrum all multiples of $\tau$. In all  settings that we consider, the action spectrum is known to be non-empty, whereas in general this is a conjecture due to Weinstein.

A closed connected contact manifold is called \textbf{Besse} when all its Reeb orbits are closed, and in such a case the Reeb orbits admit a common period by a theorem of Wadsley, \cite{Wadsley:1975sp}. When the common period of the Reeb orbits is also their minimal period, the contact manifold is called \textbf{Zoll}\footnote{In the literature, Besse and Zoll contact manifolds are sometimes called almost regular and regular contact manifolds respectively, see, e.g., \cite{Thomas:1976aa}.}. In other words, these are the prequantization bundles equipped with connection forms. To be more specific, consider a symplectic manifold $(B,\omega)$ such that $[\omega]$  is integral, i.e., $[\omega]$ lies in the image of  $H^2(B;\Z)$ in $H^2(B;\R)$. There exists an $S^1$-bundle $\pi:\Sigma\to B$ with first Chern class $-[\omega]$ or, more precisely, such that $-[\omega]$ is the image of its first Chern class. This circle bundle is not unique if $H^2(B;\Z)$ has torsion. Identifying the Lie algebra of $S^1$ with $\R$, pick a connection form $\lambda$ on this $S^1$-bundle. (We refer the reader to, e.g., \cite{Guillemin:2002aa} for a discussion of sign conventions used here.) The connection form $\lambda$ is an $S^1$-invariant form on $\Sigma$ such that $d\lambda=\pi^*\omega$. Note that such a form is not unique up to a gauge transformation if $H^1(B;\R)\neq 0$. It is easy to see that $\lambda$ is a contact form. The contact manifold $(\Sigma,\lambda)$ is often referred to as a prequantization or Boothby--Wang bundle \cite{Boothby:1958ss} and it is not hard to show that every Zoll contact manifold has this form.

In \cite{Mazzucchelli:2018aa}, the third author and Suhr proved that the unit tangent bundle of a Riemannian 2-sphere $(S^2,g)$, equipped with its standard contact form, is Zoll if and only if the simply closed geodesics of $(S^2,g)$ have the same length.
In the recent paper \cite{Cristofaro-Gardiner:2019aa}, Cristofaro-Gardiner and the third author proved that a closed connected contact 3-manifold $(\Sigma,\lambda)$ is Besse if and only if its action spectrum $\sigma(\Sigma,\lambda)$ has rank one, that is, $\sigma(\Sigma,\lambda)\subset\{m\tau\ |\ m\in\Z\}$ for some $\tau>0$. The proof of this result uses in an essential way the properties of the spectral invariants from embedded contact homology (ECH), \cite{Hutchings:2014qf, Cristofaro-Gardiner:2016rp}, a tool which is only available in dimension 3. 

Even though a characterization such as the one in \cite{Cristofaro-Gardiner:2019aa} seems currently out of reach for general high dimensional contact manifolds, in this paper we provide results in this direction for three classes of contact manifolds: convex hypersurfaces of symplectic vector spaces $(\R^{2n},\omega)$, more general restricted contact type hypersurfaces of $(\R^{2n},\omega)$, and Riemannian unit tangent bundles. With one notable exception (Theorem~\ref{t:Zoll_pinched}), we are not able to obtain characterizations of the Besse property from the mere knowledge of the action spectrum; our criteria require the knowledge of suitable $S^1$-equivariant spectral invariants, which are selectors of values from the action spectrum.

\subsection{Convex contact spheres}
\label{ss:convex}
Consider $\R^{2n}$ equipped with its standard symplectic form 
\begin{align*}
 \omega=\sum_{i=1}^n \diff x_i\wedge\diff y_i.
\end{align*}
Let $C\subset\R^{2n}$ be a compact ball containing the origin in its interior and having a smooth boundary $\Sigma=\partial C$ which is strongly convex, i.e., it is a level-set of a smooth function with positive definite Hessian at every point. The primitive 
\begin{align*}
\sum_{i=1}^n\tfrac{1}{2}\big(x_i\,\diff y_i-y_i\,\diff x_i\big)
\end{align*}
of  $\omega$ restricts to a contact form $\lambda$ on $\Sigma$. In what follows, we will refer to such a hypersurface $\Sigma$ as a convex contact sphere, implicitly equipped with its contact form $\lambda$, and denote its action spectrum by $\sigma(\Sigma)$.

In their seminal work \cite{Ekeland:1987aa}, Ekeland and Hofer constructed an increasing sequence
$$
c_0(\Sigma)\leq c_1(\Sigma)\leq c_2(\Sigma)\leq \ldots
$$
 of spectral invariants $c_i(\Sigma)\in\sigma(\Sigma)$ for integers $i\geq0$ by means of a suitable min-max procedure with $S^1$-equivariant cohomology classes. In this paper, we recall an alternative definition of these spectral invariants relying on Clarke's dual action functional, as in Ekeland's monograph \cite[Chapter~V]{Ekeland:1990lc};  see Section~\ref{ss:convex_spectral_invariants}. 
 
Every $\tau$-periodic Reeb orbit of $\Sigma$ has a well defined Conley-Zehnder index (see Section~\ref{ss:Morse_indices_Clarke}). In a more general symplectic setting, the Conley-Zehnder indices can have arbitrary sign, but on a convex contact sphere they are always larger than or equal to $n$.
If $\Sigma$ is Besse and $\tau$ is a common period for its Reeb orbits, then all $\tau$-periodic Reeb orbits have the same Conley-Zehnder index $\mu\in\N$, and we say that $\Sigma$ is $\bm{(\tau,\mu)}$\textbf{-Besse}.

\begin{exm}
\label{ex:ellipsoids}
Consider the ellipsoid
\begin{align*}
 E(a)=\left\{ z=(z_1,...,z_n)\in\R^{2n}\ \left|\ \sum_{h=1}^n \frac{|z_h|^2}{a_h}=\frac1\pi \right.\right\},
\end{align*}
where $a=(a_1,...,a_n)$ and $0<a_1\leq...\leq a_n<\infty$. The associated Reeb flow is the linear one
\begin{align*}
\phi_R^t:E(a)\to E(a),
\qquad
\phi_R^t(z)=(e^{J2\pi t/a_1}z_1,...,e^{J2\pi t/a_n}z_n ),
\end{align*}
where $J$ is the standard complex structure of the symplectic vector space $(\R^{2n},\omega_0)$. The ellipsoid $E(a)$ is Besse if and only if the ratios $a_h/a_k$ are rational for all $h,k\in\{1,...,n\}$. In this case, the minimal period $\tau_0>0$ of the Reeb flow is precisely the least common multiple of $a_1,...,a_n$. For any multiple $\tau>0$ of $\tau_0$, the $\tau$-periodic Reeb orbits have Conley-Zehnder index $\mu = 2\big(\tfrac{\tau}{a_1}+...+\tfrac{\tau}{a_n}\big) - n$, and the spectral invariants satisfy 
\begin{align}
\label{e:equality_Besse_ellipsoids}
\tau=c_i(\Sigma)=c_{i+n-1}(\Sigma) 
\end{align}
for $i=(\mu-n)/2$, see Section~\ref{ss:ellipsoids}.
\hfill\qed
\end{exm}

Our first result shows that the equality~\eqref{e:equality_Besse_ellipsoids} is indeed a characterizing property of $(\tau,\mu)$-Besse convex contact spheres.

\begin{thm}\label{t:Besse_convex}
A convex contact sphere $\Sigma\subset\R^{2n}$ is Besse if and only if $c_i(\Sigma)=c_{i+n-1}(\Sigma)$ for some integer $i\geq0$. In this case, $\Sigma$ is $(\tau,\mu)$-Besse for $\tau=c_i(\Sigma)=c_{i+n-1}(\Sigma)$ and $\mu=2i+n$.
\end{thm}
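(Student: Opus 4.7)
The plan is to work with Clarke's dual action functional $\psi$, set up in Section~\ref{ss:convex_spectral_invariants}, under which the Ekeland--Hofer invariants $c_i(\Sigma)$ are realized as $S^1$-equivariant min-max values with respect to the Fadell--Rabinowitz index $\fr$. The functional $\psi$ is $S^1$-invariant under time translation, and its positive critical $S^1$-orbits correspond bijectively to the closed Reeb orbits of $\Sigma$, with action equal to the period; at a critical orbit representing $\gamma$ one has the index identity $\cz(\gamma)=\ind_\psi(\gamma)+n$ together with matching of nullities, as in Section~\ref{ss:Morse_indices_Clarke}. The central minimax principle, which drives both implications, has two halves: on the one hand, whenever $c_j(\Sigma)=c_{j+k-1}(\Sigma)=\tau$ the critical set of $\psi$ at the level corresponding to $\tau$ has $\fr$-index at least $k$; on the other, a Morse--Bott critical family at level $\tau$ of $\fr$-index at least $k$ and common Morse index $m$ forces $c_i(\Sigma)=\tau$ for $k$ consecutive values of $i$ beginning at $i=m/2$.

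For the ``only if'' direction, assume $\Sigma$ is $(\tau,\mu)$-Besse. Then the set of $\tau$-periodic Reeb orbits covers $\Sigma$, and the corresponding critical locus $\KK$ of $\psi$ is a Morse--Bott critical manifold whose $S^1$-quotient by time translation is the orbit space of the Reeb flow, a closed orbifold of real dimension $2n-2$. The common Morse--Bott index of $\KK$ equals $\mu-n$ by the index identity above, and a characteristic-class computation (Gysin sequence for the principal $S^1$-orbifold bundle over the orbit space) gives $\fr(\KK)=n$. Feeding this into the principle yields $c_i(\Sigma)=\tau$ for $i_0\leq i\leq i_0+n-1$ with $i_0=(\mu-n)/2$, whence $c_{i_0}(\Sigma)=c_{i_0+n-1}(\Sigma)=\tau$ and $\mu=2i_0+n$.

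For the ``if'' direction, assume $c_i(\Sigma)=c_{i+n-1}(\Sigma)=\tau$. The principle produces a critical set of $\psi$ at the level of $\tau$ with $\fr$-index at least $n$, and evaluating critical loops at time zero identifies it, as an $S^1$-space under time translation, with the compact subset $K\subseteq\Sigma$ of points on $\tau$-periodic Reeb orbits equipped with the rescaled Reeb $S^1$-action of period $\tau$. Thus $\fr(K)\geq n$, and I would close by invoking the topological fact that any proper closed Reeb-invariant subset of the convex contact sphere $\Sigma\cong S^{2n-1}$ has $\fr$-index at most $n-1$: the quantity $\fr(K)-1$ equals the cup-length of the Euler class of the circle bundle $K\to K/S^1$ in $H^*(K/S^1;\Q)$, and the long exact sequence of the Borel pair $(\Sigma_{S^1},K_{S^1})$ forces this cup-length to drop by at least one when $K\subsetneq\Sigma$. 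Hence $K=\Sigma$, every point of $\Sigma$ lies on a $\tau$-periodic orbit, and $\Sigma$ is Besse; the equality $\mu=2i+n$ follows from the index identity applied to any such orbit.

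The main obstacle is the topological step in the ``if'' direction: a priori $K$ is only a compact subset of $\Sigma$, not a smooth submanifold, and the circle action on $K$ may carry non-trivial isotropy at points on iterates of shorter Reeb orbits, so the cohomological comparison with $\Sigma\cong S^{2n-1}$ must be executed with care. A subsidiary technical point is the Morse--Bott computation in the ``only if'' direction, which underlies the bookkeeping pinning down both the window $[i_0,i_0+n-1]$ of coincident capacities and the matching $\mu=2i_0+n$.
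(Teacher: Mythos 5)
Your proposal follows the same broad outline as the paper (Clarke's dual functional, Fadell--Rabinowitz minimax, the Morse--index-to-Conley--Zehnder dictionary, a Gysin computation for the Besse critical manifold), but both implications, as you have written them, rest on two steps that do not actually go through and that the paper resolves with dedicated machinery.

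For the ``if'' direction, the Lusternik--Schnirelmann theorem gives a lower bound on the Fadell--Rabinowitz index of \emph{every $S^1$-invariant neighborhood} of the critical set, not of the critical set $\KK$ itself; these are not the same for a general compact invariant set, and $K=\fix(\phi_R^\tau)$ is a priori only compact, not a submanifold or even an ANR. Moreover, even granting a bound on $\fr(K)$, your closing claim --- that the Borel long exact sequence of $(\Sigma,K)$ forces $\fr(K)<\fr(\Sigma)$ whenever $K\subsetneq\Sigma$ --- does not follow: the restriction $H^*_{S^1}(\Sigma;\Q)\to H^*_{S^1}(K;\Q)$ sends $e_\Sigma^{n-1}\mapsto e_K^{n-1}$, but nothing in the LES forces that restriction to vanish, and for an arbitrary compact invariant $K$ one cannot rule out $e_K^{n-1}\neq 0$ on $\check{\mathrm{H}}$-level considerations alone. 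The paper's route is different and avoids $\fr(K)$ entirely: Lemma~\ref{l:neighborhoods} constructs, for any open $W\subsetneq\Sigma$ containing $K$, a tubular neighborhood $\NN$ of $\KK$ in $L^\beta_0(S^1,\R^{2n})$ homotopy equivalent to $W$, so $H^{2n-1}(\NN;\Q)=0$ since $W$ is a non-compact open $(2n-1)$-manifold; then the abstract Lemma~\ref{l:bound_FR} converts this vanishing of \emph{ordinary} cohomology of small (non-equivariant) neighborhoods into the bound $\fr(\UU)<n-1$ for some invariant $\UU$. That is the mechanism that makes $K=\Sigma$ unavoidable, and it is genuinely needed.

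For the ``only if'' direction, the ``minimax principle'' you invoke --- a Morse--Bott family at level $\tau$ with index $m$ and $\fr$-index $k$ forces $c_i=\tau$ for $k$ consecutive $i$ starting at $i=m/2$ --- is not a general fact. What pins down the exact window of coincident invariants is a perfectness-type statement for $\A$ in the relevant degrees, which in turn hinges on a lacunarity/parity argument: the paper's Lemma~\ref{l:Besse_indices} shows that in the Besse case each critical component $\KK'$ has $\nul(\KK')=\dim(\KK')$ and \emph{even} Morse index, so that $\ind(\KK')+\nul(\KK')$ is odd and strictly less than $\ind(\KK)$ for all lower components; this plus the monotonicity formula~\eqref{e:Morse_index_formula} yields $H^{d}_{S^1}(\{\A<\tau\};\Q)=0$ for $d\geq\ind(\KK)-1$ (and a dual estimate above the level), from which $c_{i-1}<\tau=c_i=c_{i+n-1}<c_{i+n}$ with $i=\ind(\KK)/2$. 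Without that parity input, other critical levels could shift the minimax indexing and $c_i=\tau$ need not start at $i=m/2$. (Separately, watch your normalization of $\fr$: with the paper's convention $\fr(S^{2n-1})=n-1$, not $n$; your bookkeeping appears internally consistent under a shifted convention, but it would be easy to lose the correct window length if mixed with the paper's.)
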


An immediate consequence of Theorem~\ref{t:Besse_convex} is a characterization of  Zoll convex contact spheres:

\begin{cor}\label{c:Zoll_convex}
A convex contact sphere $\Sigma\subset\R^{2n}$ is Zoll if and only if $c_0(\Sigma)=c_{n-1}(\Sigma)$.
\end{cor}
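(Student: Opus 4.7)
The plan is to derive Corollary~\ref{c:Zoll_convex} as the specialization of Theorem~\ref{t:Besse_convex} to $i=0$, in which case the theorem reads: $c_0(\Sigma)=c_{n-1}(\Sigma)$ if and only if $\Sigma$ is $(\tau,n)$-Besse with $\tau=c_0(\Sigma)$. The task therefore reduces to establishing that on a convex contact sphere this $(\tau,n)$-Besse property is equivalent to being Zoll with minimal period $\tau$.

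For the implication $c_0(\Sigma)=c_{n-1}(\Sigma)\Rightarrow\Sigma$ is Zoll, Theorem~\ref{t:Besse_convex} provides that $\Sigma$ is $(\tau,n)$-Besse with $\tau=c_0(\Sigma)$, so every $\tau$-periodic Reeb orbit has Conley--Zehnder index equal to $n$, the minimum permitted on a strongly convex hypersurface. Were there a Reeb orbit $\gamma_0$ of minimal period $\tau_0<\tau$, one would have $\tau=k\tau_0$ for some integer $k\ge 2$, and the iterate $\gamma_0^k$ would be a $\tau$-periodic Reeb orbit. The classical iteration inequality for convex hypersurfaces, as in Ekeland~\cite[Chapter~V]{Ekeland:1990lc}, forces $\cz(\gamma_0^k)>n$ for $k\ge 2$, contradicting the $(\tau,n)$-Besse property. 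Hence $\tau$ is the common minimal period and $\Sigma$ is Zoll.

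For the converse, assume $\Sigma$ is Zoll with minimal period $\tau$. Then $\Sigma$ is $(\tau,\mu_0)$-Besse with $\mu_0\geq n$ equal to the common Conley--Zehnder index of the simple $\tau$-periodic orbits. The construction of the spectral invariants $c_i$ via Clarke's dual action functional (to be reviewed in Section~\ref{ss:convex_spectral_invariants}), combined with the forward content of Theorem~\ref{t:Besse_convex}, associates to the Besse period $\tau$ and index $\mu_0$ the index $i_0=(\mu_0-n)/2\ge 0$ with $c_{i_0}(\Sigma)=c_{i_0+n-1}(\Sigma)=\tau$; this is the general form of the equality $\tau=c_i=c_{i+n-1}$ explicitly verified for ellipsoids in Example~\ref{ex:ellipsoids}. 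On the other hand, the first Ekeland--Hofer capacity coincides with the minimal Reeb period on a convex contact sphere, so $c_0(\Sigma)=\tau$. Monotonicity $c_0(\Sigma)\leq c_1(\Sigma)\leq\dots$ together with $c_0(\Sigma)=c_{i_0}(\Sigma)=\tau$ then yields $c_0(\Sigma)=\dots=c_{i_0+n-1}(\Sigma)=\tau$, and since $n-1\leq i_0+n-1$ this gives $c_{n-1}(\Sigma)=\tau=c_0(\Sigma)$.

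The main point of delicacy is the converse direction, where one invokes a slightly strengthened form of Theorem~\ref{t:Besse_convex}: not only does Besse imply the existence of some $i$ with $c_i=c_{i+n-1}$, but each Besse common period $\tau$ with common index $\mu$ produces its own index $i=(\mu-n)/2$. This refinement comes for free from the min-max construction of the $c_i$'s and should be read off from the proof of the theorem; once it is in hand, the monotonicity argument above circumvents the need to pin down the value of $\mu_0$ a priori. The forward direction is the straightforward application of the iteration inequality, provided one is careful with Conley--Zehnder conventions near the boundary value $n$ of convexity.
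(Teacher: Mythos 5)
Your proof is correct, but in both directions it takes a substantially more roundabout route than the paper's.

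For the forward implication, the paper observes that $c_0(\Sigma)=\min\A=\sys(\Sigma)$ is the \emph{minimum} of the action spectrum; once Lemma~\ref{l:Besse_convex_1} delivers that every Reeb orbit has $c_0(\Sigma)$ as a (not necessarily minimal) period, there is simply no room for a smaller minimal period, and the Zoll property is immediate. You instead extract the full $(\tau,n)$-Besse conclusion from Theorem~\ref{t:Besse_convex} and then invoke the strict monotonicity of the Conley--Zehnder index under iteration on convex hypersurfaces to exclude iterated orbits. This is sound (iterates pick up extra conjugate points through the formula~\eqref{e:Morse_index_formula}, since $\Gamma_\alpha(1)$ fixes $\dot\gamma(0)$), but it invokes a nontrivial index estimate where a one-line systolic observation suffices.

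For the converse, the paper argues directly: in the Zoll case, $\A^{-1}(c_0(\Sigma))$ is $S^1$-equivariantly homeomorphic to $\Sigma$ with its free $S^1$ action, hence has Fadell--Rabinowitz index $n-1$, which immediately yields $c_{n-1}(\Sigma)=c_0(\Sigma)$. You instead route through the refined Besse statement (what the paper proves as Lemma~\ref{l:Besse_convex_2}): a $(\tau,\mu_0)$-Besse sphere has $c_{i_0}=c_{i_0+n-1}=\tau$ at $i_0=(\mu_0-n)/2$, and combining with $c_0=\tau$ and monotonicity collapses the whole range $c_0,\dots,c_{i_0+n-1}$ to $\tau$. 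This works, but Lemma~\ref{l:Besse_convex_2} is proved in the paper only \emph{after} Corollary~\ref{c:Zoll_convex} (and requires the lacunarity and parity arguments of Lemma~\ref{l:Besse_indices}); adopting your route would require reordering the exposition and burdens the corollary with machinery it does not need. Note also that your argument implicitly forces $i_0=0$ (since $c_{i_0-1}<\tau$ by Lemma~\ref{l:Besse_convex_2} while $c_0=\tau$ would give $c_{i_0-1}\geq\tau$ for $i_0\geq 1$), so the case distinction on $\mu_0$ is illusory.

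One small terminological point: you refer to ``the first Ekeland--Hofer capacity'' coinciding with the minimal period, but what is being used is the spectral invariant $c_0$ from the Clarke dual functional, defined in Section~\ref{ss:convex_spectral_invariants}; the paper is careful to distinguish this from the genuine Ekeland--Hofer capacity of Section~\ref{s:EH_capacities}, noting only conjecturally that they agree.
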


A convex contact sphere $\Sigma=\partial C\subset\R^{2n}$ is said to be $\delta$-pinched when it is squeezed between two round balls $B^{2n}(r)\subseteq C\subseteq B^{2n}(R)$ whose radii have ratio $R/r<\delta$. The class of $\sqrt{2}$-pinched convex contact spheres $\Sigma\subset\R^{2n}$ is particularly significant in the study of periodic orbits: for instance, a theorem due to Ekeland and Lasry \cite{Ekeland:1980aa} asserts that they always have at least $n$ closed Reeb orbits. (The multiplicity problem is still open for general convex contact spheres of dimension at least $9$ without any additional non-degeneracy assumptions; see however \cite{Long:2002aa}, and also \cite{Ginzburg:2020aa} for more recent results and further references.) Building on \cite{Ekeland:1980aa}, we provide a characterization of Zoll $\sqrt{2}$-pinched convex contact spheres, which is solely based on the action spectrum. Recall that the systole $\sys(\Sigma)$ is the minimum of the spectrum $\sigma(\Sigma)$, that is, the minimum among all the periods of the closed Reeb orbits of $\Sigma$.

\begin{thm}\label{t:Zoll_pinched}
A convex contact $\delta$-pinched sphere $\Sigma\subset\R^{2n}$ with $\delta\in(1,\sqrt2]$ is Zoll if and only if its action spectrum satisfies 
$\sigma(\Sigma)\cap(\sys(\Sigma),\delta^2\sys(\Sigma))=\varnothing$. 
\end{thm}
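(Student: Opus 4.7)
I would handle the two directions separately. The ``only if'' is immediate: if $\Sigma$ is Zoll with common minimal period $\tau := \sys(\Sigma)$, then $\sigma(\Sigma) = \tau \cdot \mathds{N}_{\geq 1}$, so the open interval $(\tau, \delta^2 \tau)$ is disjoint from $\sigma(\Sigma)$ as soon as $\delta^2 \leq 2$, which is our hypothesis.

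For the converse, set $\tau_0 := \sys(\Sigma)$ and assume $\sigma(\Sigma) \cap (\tau_0, \delta^2 \tau_0) = \varnothing$. My plan is to prove that $c_0(\Sigma) = c_{n-1}(\Sigma)$ and then to invoke Corollary~\ref{c:Zoll_convex}. Two ingredients combine to give this. First, I would recall the classical identification $c_0(\Sigma) = \tau_0$: this follows from the Clarke dual action description of the spectral invariants reviewed in Section~\ref{ss:convex_spectral_invariants}, since the global minimum of the dual action functional is a genuine critical value realized by the shortest closed Reeb orbit of $\Sigma$ (see \cite[Chapter~V]{Ekeland:1990lc}). Second, monotonicity of the $c_i$ under domain inclusion, together with the computation $c_0(\partial B^{2n}(\rho)) = \ldots = c_{n-1}(\partial B^{2n}(\rho)) = \pi \rho^2$ for round spheres (a special case of Example~\ref{ex:ellipsoids}), yields, for any round balls $B^{2n}(r) \subseteq C \subseteq B^{2n}(R)$ realizing the $\delta$-pinching $R/r < \delta$, the chain
\begin{equation*}
\tau_0 \,=\, c_0(\Sigma) \,\leq\, c_{n-1}(\Sigma) \,\leq\, \pi R^2 \,<\, \delta^2 \pi r^2 \,\leq\, \delta^2 c_0(\Sigma) \,=\, \delta^2 \tau_0.
\end{equation*}
Since $c_{n-1}(\Sigma) \in \sigma(\Sigma) \cap [\tau_0, \delta^2 \tau_0)$, the spectral gap hypothesis forces $c_{n-1}(\Sigma) = \tau_0 = c_0(\Sigma)$, and Corollary~\ref{c:Zoll_convex} then yields that $\Sigma$ is Zoll.

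The only genuinely non-trivial step is the identification $c_0(\Sigma) = \sys(\Sigma)$; everything else is an arithmetic consequence of capacity monotonicity and of the explicit values of the $c_i$ for round spheres. The pinching hypothesis $\delta \leq \sqrt{2}$ enters the argument solely through the strict middle inequality $\pi R^2 < \delta^2 \pi r^2$, coming from $R/r < \delta$, which is what ensures that the half-open interval $[\tau_0, \delta^2 \tau_0)$ meets $\sigma(\Sigma)$ only at its left endpoint.
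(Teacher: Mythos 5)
Your proposal is correct and follows essentially the same route as the paper: establish the two-sided estimate $\sys(\Sigma)\leq c_{n-1}(\Sigma)\leq\pi R^2<\delta^2\pi r^2\leq\delta^2\sys(\Sigma)$, then use the spectral gap hypothesis to force $c_{n-1}(\Sigma)=c_0(\Sigma)$, and invoke Corollary~\ref{c:Zoll_convex}. The one cosmetic difference is that you derive the bounds $\pi r^2\leq\sys(\Sigma)$ and $c_{n-1}(\Sigma)\leq\pi R^2$ from a general monotonicity property of the Clarke-dual spectral invariants under domain inclusion together with the round-sphere computation, whereas the paper proves these two inequalities inline from the sandwich $\beta^{-1}(r^\alpha/\alpha)^{\beta-1}\|u\|^\beta\leq H^*(u)\leq\beta^{-1}(R^\alpha/\alpha)^{\beta-1}\|u\|^\beta$ (the first giving $\min\A\geq\pi r^2$, the second giving, via an explicit $S^1$-equivariant Hopf embedding $\iota\colon S^{2n-1}\hookrightarrow L^\beta_0(S^1,\R^{2n})$ with $\max\A\circ\iota\leq\pi R^2$ and $\fr(S^{2n-1})=n-1$, the bound $c_{n-1}(\Sigma)\leq\pi R^2$). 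That explicit embedding argument is precisely a self-contained proof of the instance of monotonicity you invoke, so the two arguments are mathematically identical; you should just be aware that the paper never isolates monotonicity of the $c_i$ of Section~\ref{ss:convex_spectral_invariants} as a standalone statement (it is true, since $C\subseteq C'$ with the same fixed degree $\alpha$ forces $H\geq H'$, $H^*\leq H'^*$, hence $\A\leq\A'$ pointwise), so if you wanted to write this up you would either need to cite a reference for it or supply the one-line sandwich argument, which lands you back exactly where the paper is.
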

In the context of geodesic flows (cf.~Subsection~\ref{ss:geodesic_flows}), a theorem in the same spirit was proved by Ballmann--Thorbergsson--Ziller \cite[Theorem~A]{Ballmann:1983fv}.

\subsection{Restricted contact type hypersurfaces of symplectic vector spaces}
We now consider a larger class of closed hypersurfaces 
$\Sigma\subset\R^{2n}$ formed by hypersurfaces of restricted contact type, i.e., such that there exists a primitive $\Lambda$ of the standard symplectic form $\omega$ which restricts to a contact form on $\Sigma$. For instance, the boundary of a star-shaped domain in $\R^{2n}$ has restricted contact type. In fact, every contact form on $S^{2n-1}$ supporting the standard contact structure can be obtained by a star-shaped embedding. The action spectrum $\sigma(\Sigma,\Lambda|_\Sigma)$ is independent of the choice of the primitive $\Lambda$, and therefore will be simply denoted by $\sigma(\Sigma)$. Analogously, if $(\Sigma,\Lambda|_\Sigma)$ is Besse, the same will be true if we replace $\Lambda$ by any other primitive of $\omega$ which restricts to a contact form on $\Sigma$, and therefore we will simply say that $\Sigma$ is Besse.
For this class of contact manifolds, the variational theory of closed Reeb orbits is more involved than in the convex case, and several approaches (predating Floer theory and symplectic homology) were developed by Viterbo \cite{Viterbo:1989aa}, by Hofer and Zehnder \cite{Hofer:1987aa}, and by Ekeland and Hofer \cite{Ekeland:1989aa, Ekeland:1990aa}. In this section, we adopt the latter approach, which involves the Ekeland--Hofer symplectic capacities. 

For each integer $i\geq0$ and for each bounded subset $B\subset\R^{2n}$, we denote by $c_i(B)$ the $i$-th Ekeland--Hofer capacity. This is again an increasing sequence:
\[
c_0(B)\leq c_1(B)\leq c_2(B)\leq \dotsc 
\]

Two  terminological remarks are due. First, note that there is a potential conflict of notation here, since $c_i(\cdot)$ was already used in Subsection~\ref{ss:convex} to denote the spectral invariants for the Clarke's dual action functional. Nevertheless, recent papers of Abbondandolo and Kang \cite{Abbondandolo:2019aa}, and Irie \cite{Irie:2019aa} seem to suggest that the $i$-th Ekeland--Hofer capacity of a convex contact sphere may indeed coincide with its $i$-th spectral invariant defined via the Clarke's dual action functional.  Secondly, here we think of $c_i$ as a function of $B$ and thus $c_i$ naturally extends to a function of compact subsets of $\R^{2n}$. In particular, one has $c_i(\Sigma)$ for $\Sigma=\partial B$ defined in this way. This notation could be somewhat inconsistent with the one from Section~\ref{ss:convex}, where the spectral invariants, a.k.a.\ action selectors, a.k.a.\ capacities, are associated with the domain bounded by $\Sigma$. However, one of the remarkable features of the Ekeland--Hofer capacities is that they are action selectors: if $B\subset\R^{2n}$ is a compact subset whose boundary $\Sigma=\partial B$ is smooth and of restricted contact type, then $c_i(B)=c_i(\Sigma)\in\sigma(\Sigma)$ for all $i\geq0$. 

Here we prove that the Ekeland--Hofer capacities provide sufficient conditions for a restricted contact type hypersurface $\Sigma$ to be Besse.

\begin{thm}
\label{t:EH_Besse}
Let $\Sigma\subset\R^{2n}$ be a closed hypersurface of restricted contact type with discrete action spectrum $\sigma(\Sigma)$. Assume that $c_{i}(\Sigma)=c_{i+n-1}(\Sigma)$ for some integer $i\geq0$. Then $\Sigma$ is Besse and $c_i(\Sigma)$ is a common period of its closed Reeb orbits.
\end{thm}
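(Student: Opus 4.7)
The approach is to adapt the Lusternik--Schnirelmann strategy that underlies Theorem~\ref{t:Besse_convex} to the Ekeland--Hofer variational framework for restricted contact type hypersurfaces. Let $\tau:=c_i(\Sigma)=c_{i+n-1}(\Sigma)$.

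First I would exploit the coincidence of $n$ consecutive min-max values. Each Ekeland--Hofer capacity $c_j(\Sigma)$ is produced as an $S^1$-equivariant min-max value of a strongly indefinite action functional $\Phi$ on a Hilbert completion of loops in $\R^{2n}$, filtered by the generator $\alpha\in H^2_{S^1}(\mathrm{pt};\R)$, so that $c_j$ corresponds to $\alpha^j$. The equality $c_i=c_{i+1}=\dots=c_{i+n-1}=\tau$ is the classical hypothesis of a Borel--Lusternik--Schnirelmann estimate: via an $S^1$-equivariant deformation lemma adapted to $\Phi$ (the Ekeland--Hofer pseudo-gradient from \cite{Ekeland:1990aa}), one concludes that the $S^1$-invariant critical set $K:=\crit(\Phi)\cap\{\Phi=\tau\}$ carries an equivariant cup-length at least $n-1$; in particular, the image of $\alpha^{n-1}$ is nonzero in $H^{2(n-1)}_{S^1}(K;\R)$.

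Next I would transfer this cohomological information to $\Sigma$. The discreteness of $\sigma(\Sigma)$ implies that $\tau$ is isolated in $\sigma(\Sigma)$; standard bubbling/compactness arguments then yield that $K$ is compact and organized, at least after a small perturbation, into finitely many Morse--Bott components, each one the closure of an $S^1$-family of parametrized closed Reeb orbits with period dividing $\tau$. The evaluation $\gamma\mapsto\gamma(0)$ is an $S^1$-equivariant continuous map from $K$ onto the Reeb-invariant carrier $X:=\ev(K)\subset\Sigma$. The reparametrization $S^1$-action on $K$ has only finite (cyclic) isotropy coming from iteration, so the cup-length bound on $K$ yields $\dim K/S^1\geq 2(n-1)$; reinstating the Reeb-orbit direction shows that $X$ has local dimension at least $2n-1=\dim\Sigma$, and the Morse--Bott structure makes $X$ locally a smooth submanifold of full dimension, hence open.

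Finally I would close the argument. The set $X$ is closed: if $p_k\in X$ with $p_k\to p$ and each $p_k$ lies on a closed Reeb orbit of period $T_k$ dividing $\tau$, then the $T_k$ all belong to the finite set $\sigma(\Sigma)\cap(0,\tau]$ (using discreteness), so up to a subsequence $T_k=T$ for some fixed $T$ dividing $\tau$, and by continuity of the flow $p$ lies on a closed Reeb orbit of the same period $T$. Thus $X$ is clopen in the connected manifold $\Sigma$, so $X=\Sigma$, which proves that $\Sigma$ is Besse with $\tau$ a common period.

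The main technical obstacle is the first step. Unlike the convex case, where Clarke duality provides a functional bounded below with a clean saddle-point theory, the Ekeland--Hofer functional $\Phi$ is strongly indefinite and its critical set is noncompact in the natural norm topology. Rigorously extracting the cup-length conclusion at a common min-max value thus requires an $S^1$-equivariant deformation on truncated saddle subspaces $E_N\subset E$ together with uniform Palais--Smale control, and then a careful passage to the limit identifying the abstract critical-level set with a geometric set of closed Reeb orbits. The discreteness of $\sigma(\Sigma)$ is precisely the hypothesis that makes this limit tractable, and that underwrites the closedness in the final step.
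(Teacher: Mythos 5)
Your overall strategy is the right one in spirit, and you correctly identify that the argument should be a Lusternik--Schnirelmann theorem forcing the critical set at the repeated min-max level to have large Fadell--Rabinowitz index, transferred to $\Sigma$ to show that the Reeb flow fills the hypersurface. However, there is a fundamental structural gap at the very first step that your candid closing paragraph only partially diagnoses.

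The Ekeland--Hofer capacity $c_j(\Sigma)$ is not the min-max value of a single functional $\Phi$: it is the \emph{infimum}, over a family of admissible Hamiltonians $H\in\HH(B)$, of the min-max values $c_j(H)$ of the Hamiltonian action functionals $\Phi_H$. Consequently, the hypothesis $c_i(\Sigma)=c_{i+n-1}(\Sigma)=\tau$ does \emph{not} immediately give you an $S^1$-invariant functional with a repeated critical value $\tau$. You need to produce an actual Hamiltonian $H$ for which $c_i(H)=c_{i+n-1}(H)$, and then identify the critical set of $\Phi_H$ at that level geometrically with a set of $\tau$-periodic Reeb orbits on $\Sigma$. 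This is where essentially all the technical content lies, and it is precisely where the discreteness hypothesis on $\sigma(\Sigma)$ enters: one builds an explicit increasing sequence $H_m$ whose graphs track $\Sigma$ ever more tightly (via a cutoff profile $\phi_m$ along the Liouville flow), shows the associated symplectic actions $\tau_{m,j}$ converge to $c_j(\Sigma)$, and then uses discreteness to conclude these sequences \emph{stabilize}, so that for large $m$ one actually has $c_i(H_m)=c_{i+n-1}(H_m)$ and the minimax critical orbits of $\Phi_{H_m}$ lie on a hypersurface $\Sigma_s$ obtained by a small Liouville-flow pushoff of $\Sigma$. Your proposal calls this the step requiring ``uniform Palais--Smale control on truncated saddle subspaces,'' but the issue is not the strong indefiniteness per se (the Ekeland--Hofer admissible group $\Gamma$ handles that); it is the inf over Hamiltonians, and nothing in your argument addresses it.

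A second, independent gap is the passage from the cohomological bound on the critical set to the full-dimensionality of its carrier. You claim that the cup-length bound forces $\dim K/S^1\geq 2(n-1)$ and, ``after a small perturbation,'' a Morse--Bott structure that makes the evaluation image $X\subset\Sigma$ open. But (a) equivariant cup-length in $H^*_{S^1}(K;\Q)$ being large does not by itself bound the dimension of an arbitrary compact invariant set from below; the correct implication goes through a careful local construction of an $S^1$-invariant neighborhood of $K$ inside the loop space that is homotopy equivalent to a neighborhood $W$ of the evaluation image in $\Sigma$ (so that if $W\subsetneq\Sigma$ one gets $H^{*\geq 2n-1}(W;\Q)=0$ and hence, via a Gysin argument, a neighborhood of $K$ with Fadell--Rabinowitz index $<n-1$); and (b) a ``small perturbation'' of the Hamiltonian or the hypersurface changes the action spectrum and hence the very statement being proved, so Morse--Bottness cannot be assumed. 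The open-closed argument at the end is also unnecessary: the relevant carrier is $\fix(\phi_R^{\tau})$, automatically closed, so once one rules out $\fix(\phi_R^{\tau})\subsetneq\Sigma$ by contradiction one is done.
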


In this theorem, the assumption that the action spectrum be discrete does not seem to be essential, and is imposed only to avoid technical difficulties. Up to this assumption, Theorem~\ref{t:EH_Besse} is a generalization of one direction -- the ``if" assertion -- of Theorem~\ref{t:Besse_convex}. We do not know whether the Ekeland--Hofer capacities actually provide a characterization of restricted contact type hypersurfaces, although this is likely to be the case. 

\begin{quest}
Does the converse implication in Theorem~\ref{t:EH_Besse} hold? More precisely, if $\Sigma$ is a Besse, closed, restricted contact type hypersurface whose Reeb orbits have common period $\tau$, is it true that $c_{i}(\Sigma)=c_{i+n-1}(\Sigma)=\tau$, where $2i+n$ is the Conley--Zehnder index of any $\tau$-periodic Reeb orbit?
\end{quest}

\subsection{Geodesic flows}
\label{ss:geodesic_flows}

The last class of contact manifolds which we consider is that of Riemannian unit tangent bundles 
\[
\Sigma=SM=\big\{(q,v)\in\Tan M\ \big|\ \|v\|_g=1\big\},
\]
where $M$ is a closed manifold of dimension $n\geq2$, and $g$ is a Riemannian metric on $M$. The unit tangent bundle $SM$ is implicitly equipped with the contact form which is the restriction of the Liouville 1-form on $\Tan M$. The Reeb flow on $SM$ is the geodesic flow of $(M,g)$, and in particular the closed Reeb orbits are the lifts of closed geodesics of $(M,g)$.

A Riemannian metric $g$ on $M$ is called Besse (resp.\ Zoll) when its unit tangent bundle $SM$ is Besse (resp.\ Zoll). We will say that a closed manifold $M$ of dimension at least $2$ is Besse (resp.\ Zoll) when it admits a Besse (resp.\ Zoll) Riemannian metric. In the recent paper \cite{Mazzucchelli:2018pb}, the third author and Suhr provided a characterization of the Zoll property of a Riemannian metric in terms of suitably defined spectral invariants associated with a finite-dimensional reduction of the classical energy functional. Here, we provide an analogous characterization in terms of the usual $S^1$-equivariant spectral invariants of geodesic flows.

A celebrated theorem due to Bott and Samelson \cite{Bott:1954aa, Samelson:1963aa, Besse:1978pr} asserts that any Besse manifold has the same integral cohomology ring as one of the compact rank-one symmetric spaces
\begin{align*}
S^n,\ \RP^n,\ \CP^{n/2},\ \HP^{n/4},\ \CaP^2\mbox{ (with dimension $n=16$)}.
\end{align*}
We denote by $M_0\in\{S^n, \RP^n, \CP^{n/2}, \HP^{n/4}, \CaP^2\}$ the model of a Besse manifold $M$, i.e., the unique manifold $M_0$ from this list, satisfying $H^*(M;\Z)\cong H^*(M_0;\Z)$. All simply connected Besse manifolds $M$ are spin, except those whose associated model is $M_0=\CP^{n/2}$ with even complex dimension $n/2$; see~\cite{Radeschi:2017dz}.

Let $\Lambda M=W^{1,2}(S^1,M)$ be the free loop space of $M$, and let us identify $M\subset \Lambda M$ with the subspace of constant loops. The circle $S^1=\R/\Z$ acts on $\Lambda M$ by time-shift
\begin{align*}
 t\cdot\gamma=\gamma(t+\cdot),
 \qquad
 \forall t\in S^1,\ \gamma\in\Lambda M,
\end{align*}
and we can therefore consider the $S^1$-equivariant cohomology of $\Lambda M$.  Every non-zero cohomology class $\mu\in H^*_{S^1}(\Lambda M,M;\Q)$ gives rise to an associated spectral invariant $c_g(\mu)\in\sigma(SM)$, which is the period of a unit-speed closed geodesic of $(M,g)$. We will recall the precise definition of $c_g(\mu)$ in Subsection~\ref{ss:geodesic_flows_equivariant_spectral_invariants}.

Assume now that $M$ is an $n$-dimensional Zoll manifold, and consider a Zoll Riemannian metric on $M$. The associated geodesic flow gives rise to an $S^1$ action on the unit tangent bundle $SM$. In this case, it is well known that the $S^1$-equivariant cohomology of the free loop space relative to the constants admits an explicit isomorphism
\begin{align}
\label{e:rel_equiv_hom_loop_space}
H^*_{S^1}(\Lambda M,M;\Q)\cong\bigoplus_{m\geq1} H^{*-m\,i(M)-(m-1)(n-1)}(SM/S^1;\Q),
\end{align}
for some integer $i(M)>0$; see Section~\ref{ss:spectral_characterization_Riemannian}. We consider the non-zero cohomology classes
\begin{align*}
\alpha_m\in H^{m\,i(M)+(m-1)(n-1)}_{S^1}(\Lambda M,M;\Q),
\\
\beta_m\in H^{m\,i(M)+(m+1)(n-1)}_{S^1}(\Lambda M,M;\Q),
\end{align*}
which correspond to generators of $H^0_{S^1}(SM/S^1;\Q)$ and $H^{2n-2}_{S^1}(SM/S^1;\Q)$, respectively, of the $m$-th summand in the above direct sum decomposition. Note that the definition of these classes relies on the fact that $M$ admits a Zoll metric. (However, it is not immediately obvious that the classes $\alpha_m$ and $\beta_m$ are determined by $M$ and completely independent of the choice of the reference Zoll metric.) Our characterization of Zoll Riemannian metrics is as follows.

\begin{thm}
\label{t:geodesics_Zoll}
Let $M$ be a simply connected spin Zoll manifold of dimension $n\geq2$, and let $g$ be any Riemannian metric on $M$. Then, the following three conditions are equivalent:
\begin{itemize}
\item[(i)] $c_g(\alpha_1)=c_g(\beta_1)$, 
\item[(ii)] $c_g(\alpha_m)=c_g(\beta_m)=m\,c_g(\alpha_1)$ for all integers $m\geq1$,
\item[(iii)] $g$ is Zoll, and its unit speed geodesics have minimal period $c_g(\alpha_1)$.
\end{itemize}
Moreover, if $M=S^n$ with $n\neq3$, condition $(\mathrm{i})$ can be replaced by
\begin{itemize}
\item[(i')] $c_g(\alpha_m)=c_g(\beta_m)$ for some $m\in\N$.
\end{itemize}
\end{thm}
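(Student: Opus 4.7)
The plan is to prove (iii) $\Rightarrow$ (ii) $\Rightarrow$ (i) $\Rightarrow$ (iii), mirroring the strategy of Theorem~\ref{t:Besse_convex} but now carried out on the free loop space rather than via Clarke duality. For (iii) $\Rightarrow$ (ii), I would take the given Zoll metric $g$ itself as the reference used to set up the decomposition~\eqref{e:rel_equiv_hom_loop_space}: the critical set of the energy functional at the action level $m\tau$ (with $\tau$ the common minimal period) is a Morse--Bott critical $S^1$-submanifold of $\Lambda M$ diffeomorphic to $SM$, whose local $S^1$-equivariant cohomology realizes the $m$-th summand in~\eqref{e:rel_equiv_hom_loop_space}. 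Equivariant Morse--Bott theory then identifies both $c_g(\alpha_m)$ and $c_g(\beta_m)$ with $m\tau$. The implication (ii) $\Rightarrow$ (i) is immediate.

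The heart of the proof is (i) $\Rightarrow$ (iii). Let $c := c_g(\alpha_1)=c_g(\beta_1)$ and let $K \subset \Lambda M$ denote the critical set of the energy functional at the corresponding level. The classes $\alpha_1$ and $\beta_1$ differ in degree by $2(n-1) = \dim(SM/S^1)$, and come respectively from the bottom and top generators of $H^*(SM/S^1;\Q)$ in the $m=1$ summand of~\eqref{e:rel_equiv_hom_loop_space}. A minimax/deformation argument then shows that the local $S^1$-equivariant cohomology of $K$ simultaneously receives nontrivial pull-backs of $\alpha_1$ and $\beta_1$; combined with the identification of these two classes with the extreme classes on $SM/S^1$, this forces the evaluation map $K/S^1 \to SM/S^1$, sending a closed geodesic to its unit initial tangent vector, to be surjective. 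Hence every $(q,v)\in SM$ lies on a closed geodesic of length $c$, so $g$ is Besse with common period $c$. That $c$ is actually the \emph{minimal} period follows because $\alpha_1$ realizes the lowest-degree nonzero class in the target of the minimax, so $c_g(\alpha_1)$ cannot exceed the length of any prime closed geodesic of $g$.

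For the moreover statement, assume $M=S^n$ with $n\neq 3$ and $c_g(\alpha_m)=c_g(\beta_m)=:c$ for some $m\geq 1$. The same minimax argument produces a critical set at level $c$ whose evaluation onto $SM$ is surjective, so $g$ is Besse with common period $c$. To upgrade this to Zoll, i.e.\ to exclude that all length-$c$ closed geodesics are $k$-fold iterates ($k\geq 2$) of strictly shorter primes, I would invoke the Bott iteration formulae for the Morse index together with the constraints coming from the Bott--Samelson classification: the spin assumption and the explicit structure of $H^*_{S^1}(\Lambda S^n, S^n;\Q)$ impose a congruence on the Morse index of any closed geodesic realizing $c_g(\alpha_m)$, which rules out iterates whenever $n\neq 3$. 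The dimension $n=3$ is excluded precisely because this congruence degenerates there.

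The main obstacle is the implication (i) $\Rightarrow$ (iii): one must extract, from a single equality of spectral invariants, the strong geometric conclusion that the critical set at the common action level covers all of $SM$ under evaluation. The required $S^1$-equivariant minimax argument is delicate because the closed-geodesic critical set is intrinsically degenerate under the $S^1$-action and, for an arbitrary $g$, is generically far from Morse--Bott; one must therefore argue via local equivariant cohomology of an \emph{isolated} critical set at the given action value, without a global Morse--Bott model. A secondary, purely index-theoretic difficulty in the moreover part is the exclusion of iterates, which is exactly what forces $n\neq 3$.
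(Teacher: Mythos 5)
Your outline for (iii)~$\Rightarrow$~(ii)~$\Rightarrow$~(i) and for the Besse conclusion in (i)~$\Rightarrow$~(iii) is in the same spirit as the paper (the latter is Lemma~\ref{l:Besse} there), but there are two genuine gaps in the remainder.

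First, the step from Besse to Zoll in (i)~$\Rightarrow$~(iii) is not justified. You assert that because $\alpha_1$ is ``the lowest-degree nonzero class in the target of the minimax, $c_g(\alpha_1)$ cannot exceed the length of any prime closed geodesic.'' This is not a consequence of the minimax definition: a low-degree class can perfectly well become visible only at a high energy level, and the fact that $\alpha_1$ lives in the minimal degree says nothing a priori about where it is detected in the filtration. The actual argument requires using that once $g$ is known to be Besse, the energy functional is perfect for relative $S^1$-equivariant rational cohomology (this is where the simply connected and spin hypotheses and Radeschi--Wilking's work enter), together with the computation that $\dim H^{i(M)}_{S^1}(\Lambda M,M;\Q)=1$. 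One then shows that if $g$ had a prime closed geodesic of period strictly below the common period, both that shorter critical manifold and the one at the common period would have Morse index $i(M)$ (by the Bott iteration inequality combined with minimality of $i(M)$), and perfectness would force $\dim H^{i(M)}_{S^1}(\Lambda M,M;\Q)\geq 2$, a contradiction. Your sketch omits the perfectness and the rank-one computation entirely, which is the heart of this part of the proof.

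Second, the moreover part is simply on the wrong track. You propose that the spin hypothesis and an ``index congruence'' coming from the Bott iteration formulae rule out iterates when $n\neq 3$. No such elementary congruence argument exists; if it did, it would give an easy proof of the Berger conjecture. The reason $M=S^n$ with $n\neq 3$ allows (i') to replace (i) is that the Berger conjecture --- every Besse metric on $S^n$ is Zoll --- is a theorem in those cases: Gromoll--Grove for $n=2$ and Radeschi--Wilking for $n>3$. It remains open for $S^3$, and this is exactly why $n=3$ is excluded. So the argument you need here is: $c_g(\alpha_m)=c_g(\beta_m)$ implies Besse by the same Lemma~\ref{l:Besse} mechanism, then one quotes the Berger conjecture to conclude Zoll, and finally Lemma~\ref{l:Zoll_spectrum} pins down the minimal period. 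This is a citation to a deep geometric result, not an index-theoretic lemma, and the role of the index computations in this proof is entirely confined to the rank-one argument described above.
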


The moreover part of Theorem~\ref{t:geodesics_Zoll} is based on the validity of the so-called Berger conjecture, which was established for $S^2$ by Gromoll and Grove \cite{Gromoll:1981kl}, and recently for $S^n$ with $n>3$ by Radeschi and Wilking \cite{Radeschi:2017dz}: any Besse Riemannian metric on $S^n$ is Zoll. The conjecture is still open for the other simply connected closed manifolds admitting Zoll Riemannian metrics, and its validity would imply the equivalence of (i') and the Zoll condition in full generality.

\subsection{General perspective} All results stated in this section fit the same general pattern and can be cast in the framework of Lusternik--Schnirelmann theory. In its modern version the theory gives a lower bound on the number of critical values of a function  or a functional $f\colon X\to \R$ with isolated critical points in terms of the structure of the cup product on the cohomology of the underlying space $X$ or, more generally, the Morse-type homology of $f$. It hinges on the fact that for two cohomology classes $\alpha$ and $\beta$, we necessary have the inequality
\[
c(\alpha\smile\beta,f)\geq c(\alpha,f)
\] 
relating the minimax critical values of $f$ associated with the classes $\alpha\smile\beta$ and $\alpha$ and on the fact that this inequality is strict when $\beta$ has degree $|\beta|>0$ and the critical points are isolated.

Furthermore, when the equality occurs the critical set $K$ of $f$ at the critical level $c(\alpha\smile\beta,f)=c(\alpha,f)$ must be sufficiently large: the restriction $\beta|_K$ of $\beta$ to $K$ is necessarily non-zero. In particular, informally speaking, the dimension of $K$ must be at least $|\beta|$. These facts are well-known and in some form go back to the original work of Lusternik and Schnirelmann, cf.\ \cite{Viterbo:1997pi}. Moreover, they also hold in many other versions of Morse theory, e.g., in fixed point Floer theory; \cite{Howard:2012aa}.

This general principle carries over to the equivariant setting when $f$ is invariant under an $S^1$ action on $X$.  The multiplication by the image of the generator of $e\in H^2(BS^1;\R)$ gives rise to the shift operator $D$ on $H^*_{S^1}(X;\R)$ increasing degree by 2.  As above, $c(D\alpha,f)\geq c(\alpha,f)$ and the inequality is strict provided that the action is locally free and the critical sets are isolated $S^1$-orbits. Moreover, whenever the equality holds,  the operator $D$ is necessarily non-zero on $H^*_{S^1}(K;\R)$, where $K$ is again the critical set on the level $c(D\alpha,f)=c(\alpha,f)$. In particular, $c(D^k\alpha,f)=c(\alpha,f)$ implies that $K$ must have dimension at least $2k+1$, assuming again that the action is locally free. This setting is analyzed in detail in Section \ref{s:Fadell_Rabinowitz} where we discuss the Fadell--Rabinowitz index. 

Let us specialize these arguments to the case where $f$ is an action--type functional or the energy functional on a suitably defined loop space $X$ of a manifold $Y$ of dimension $2n+1$. If $c(D^n\alpha,f)=c(\alpha,f)$ for some class $\alpha$, we conclude that $Y$ is filled in by the critical points of $f$. The results of the previous sections are examples of this general principle.

The Lusternik--Schnirelmann type inequalities established in \cite{Ginzburg:2020aa} indicate that these results should also have analogues for equivariant Floer and symplectic  cohomology, readily leading to a systolic characterization of Reeb flows on more general contact manifolds. (However, yet a more general version of the results, mentioned above, with a homologically non-trivial critical set $K\subsetneq Y$ of closed Reeb orbits, identified with a subset of $Y$, of a smaller dimension encounters a technical difficulty. It stems from the fact that the Floer homology is not the homology of a loop space $X$, while $H^*_{S^1}(K;\R)$, unless $K$ is ``sufficiently nice", is defined in terms of the equivariant homology of a neighborhood of $K$ in the loop space $X$.)

Finally, note that on the conceptual level the results from \cite{Cristofaro-Gardiner:2019aa} also fit in this framework with the role of the equivariant cohomology taken by ECH and the $U$-operator in ECH playing the role of the shift operator $D$.

\subsection{Organization of the paper} In Section~\ref{s:Fadell_Rabinowitz} we provide the background on the Fadell--Rabinowitz index, and prove a technical result, Lemma~\ref{l:bound_FR}, which will be one of the key ingredients for the proofs of our theorems. In Section~\ref{s:convex} we prove the results concerning convex contact spheres. In Section~\ref{s:EH_capacities} we prove Theorem~\ref{t:EH_Besse} on restricted contact type hypersurfaces in $\R^{2n}$. Finally, in Section~\ref{s:geodesic_flows} we prove Theorem~\ref{t:geodesics_Zoll} concerning geodesic flows.

\subsection*{Acknowledgments} The authors are grateful to Barney Bramham for suggesting an improvement on the original statement of Theorem~\ref{t:Zoll_pinched} and to Jean Gutt and Andrzej Szulkin for useful discussions. Parts of this work were carried out while the second and third authors were in residence at MSRI, Berkeley, CA, during the Fall 2018 semester. The authors would like to thank the institute for its warm hospitality and support.

\section{The Fadell--Rabinowitz index}
\label{s:Fadell_Rabinowitz}
All equivariant spectral invariants used in this paper are based on a cohomological index introduced by Fadell--Rabinowitz  \cite{Fadell:1978aa}, which is a replacement for the cup-length in equivariant critical point theory. We denote by $e\in H^2(BS^1;\Q)$ the Euler class of the classifying vector bundle $ES^1\to BS^1$. Let $X$ be a topological space equipped with a continuous $S^1$ action. Denote by
$\pr_2:X\times_{S^1}ES^1\to BS^1$, $\pr_2([x,v])=[v]$ the natural projection map. The cohomology class 
\begin{align*}
e_X := \pr_2^* e\in H^2_{S^1}(X;\Q)
\end{align*} 
is the Euler class of the circle bundle 
\begin{align}
\label{e:pull_back_circle_bundle}
\pi:X\times ES^1\to X\times_{S^1} ES^1.
\end{align}
If $X\neq\varnothing$, the \textbf{Fadell--Rabinowitz index} $\fr(X)$ is the supremum of the integers $k\geq0$ such that 
\[e_X^k:=\underbrace{e_X\smile \dotsc \smile e_X}_{\times k}\neq0\mbox{ in }H^{2k}_{S^1}(X;\Q).\]
It is convenient to set $\fr(\varnothing):=-1$. Then the index becomes subadditive: if $A,B\subset X$ are two $S^1$-invariant open subsets, then 
\begin{align}
\label{e:subadditivity_FR}
\fr(A\cup B)\leq\fr(A)+\fr(B)+1. 
\end{align}
Moreover, it readily follows from its definition that the index is monotone with respect to the inclusion: $\fr(A)\leq\fr(B)$ if $A\subseteq B$.
If $X$ is the total space of a principal $S^1$-bundle $X\to B$ with Euler class $\tilde e$ (e.g., a prequantization bundle), then $\fr(X)$ is equal to the supremum of all $k\in \N$ such that $\tilde e^k\neq 0$ in $H^*(B;\Q)$.

The following technical lemma is an important ingredient in the proofs of our main theorems. We prove it for metric spaces, which is sufficient for our purposes. 

\begin{lem}
\label{l:bound_FR}
Let $X$ be a metric space equipped with a continuous $S^1$ action such that $\fr(X)<\infty$ and $K\subset X$ a compact $S^1$-invariant subset admitting arbitrarily small  neighborhoods $W\subseteq X$ with trivial cohomology $H^{2p+1}(W;\Q)$ for all $p>q$ for some integer $q>0$. Then  $K$ admits an $S^1$-invariant neighborhood $U\subseteq X$ with Fadell--Rabinowitz index $\fr(U)\leq q$.
\end{lem}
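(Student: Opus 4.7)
The strategy is to show that $e_K^{q+1}$ vanishes in the Čech equivariant cohomology $\check H^{2q+2}_{S^1}(K;\Q)$ and then to transfer this vanishing to a genuine $S^1$-invariant open neighborhood via a continuity argument. To set up the latter, I would replace the metric on $X$ by its $S^1$-average so that the action becomes isometric; then $\epsilon$-neighborhoods of $K$ are $S^1$-invariant and open, and a tube-lemma argument applied to the finite-dimensional approximations $K_n:=K\times_{S^1}S^{2n+1}$ and $X_n:=X\times_{S^1}S^{2n+1}$ of the Borel construction should yield, for every $k\geq 0$,
\[
\check H^{k}_{S^1}(K;\Q)\;=\;\varinjlim_{U}\,H^{k}_{S^1}(U;\Q),
\]
with the colimit running over $S^1$-invariant open neighborhoods $U\supseteq K$. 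Once $e_K^{q+1}=0$ is proved on the left, some representative must already vanish at a finite stage, yielding an $S^1$-invariant $U$ with $e_U^{q+1}=0$ and hence $\fr(U)\leq q$.

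To prove $e_K^{q+1}=0$, I would first use the standard continuity of Čech cohomology together with the hypothesis to obtain $\check H^{2p+1}(K;\Q)=0$ for every $p>q$, since the neighborhoods provided form a cofinal family. Fixing $n$ much larger than both $q$ and $N:=\fr(X)$, the Künneth formula gives $H^{2p+1}(K\times S^{2n+1};\Q)=0$ for all $q<p<n$. Feeding this into the Gysin sequence of the principal $S^1$-bundle $K\times S^{2n+1}\to K_n$,
\[
\cdots\longrightarrow H^{2p+1}(K\times S^{2n+1};\Q)\longrightarrow \check H^{2p}(K_n;\Q)\xrightarrow{\,\smile e_K\,}\check H^{2p+2}(K_n;\Q)\longrightarrow\cdots,
\]
shows that cupping with $e_K$ is injective on $\check H^{2p}(K_n;\Q)=\check H^{2p}_{S^1}(K;\Q)$ throughout the range $q<p\leq N$.

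The decisive step is then a downward cascade. Since $\fr(X)=N<\infty$, the class $e_X^{N+1}$ vanishes in $H^{2N+2}_{S^1}(X;\Q)$, so by restriction $e_K^{N+1}=0$ in $\check H^{2N+2}_{S^1}(K;\Q)$ (the case $N\leq q$ is trivial, since then $U=X$ works). Combined with the injectivity just established,
\[
e_K^{N+1}=0\;\Longrightarrow\;e_K^{N}=0\;\Longrightarrow\;\cdots\;\Longrightarrow\;e_K^{q+1}=0,
\]
where each implication strips one factor of $e_K$ via injectivity — the factor inferred zero at each step lies in $\check H^{2p}_{S^1}(K;\Q)$ with $q+1\leq p\leq N$, exactly the injectivity range. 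I expect the main technical obstacle to be the rigorous justification of the equivariant Čech continuity formula above: one has to verify cofinality of $S^1$-invariant open neighborhoods of $K$ (via the averaged metric and the tube lemma) and the fact that their associated Borel constructions form a cofinal system of open neighborhoods of the compact $K_n$ inside $X_n$, so that classical Čech continuity on $K_n$ can be applied.
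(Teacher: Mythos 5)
Your argument is correct, but it takes a genuinely different route from the paper's. The paper works entirely at the level of open neighborhoods and sidesteps any continuity argument: it sets $r$ to be the \emph{minimum} of $\fr(V)$ over $S^1$-invariant neighborhoods $V$ of $K$, observes that $e_V^{r+1}=0$ places $e_V^r$ in the image of the Gysin map $\pi_*\colon H^{2r+1}(V;\Q)\to H^{2r}_{S^1}(V;\Q)$, and then chases a commutative diagram $V\supset W\supset U$ (with $U$ a small $S^1$-invariant neighborhood, necessarily with $\fr(U)=r$ by minimality) to conclude that $H^{2r+1}(W;\Q)\neq 0$, hence $r\leq q$. You instead work directly at the level of the compactum $K$: you invoke \v{C}ech continuity to translate the hypothesis into $\check H^{2p+1}(K;\Q)=0$ for $p>q$, and an equivariant \v{C}ech continuity statement $\check H^k_{S^1}(K;\Q)\cong\varinjlim_U H^k_{S^1}(U;\Q)$ to transfer the vanishing of $e_K^{q+1}$ back to an open $U$; the Gysin sequence is then used in the dual role, to get \emph{injectivity} of $\smile e_K$ in the range $q<p<n$, which powers the downward cascade $e_K^{N+1}=0\Rightarrow e_K^N=0\Rightarrow\dotsb\Rightarrow e_K^{q+1}=0$. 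Both approaches rest on the same Gysin sequence; yours buys a cleaner conceptual picture (everything reduced to a vanishing statement on $K$), at the cost of having to justify the equivariant \v{C}ech continuity, including cofinality of $S^1$-invariant neighborhoods (the tube-lemma argument you sketch does work, and is essentially the same one the paper implicitly uses when asserting $U\subseteq W$ can be taken $S^1$-invariant) and the identification of \v{C}ech with singular cohomology on the open neighborhoods when transferring back to $\fr(U)$. The paper's minimality trick avoids all of that machinery and is considerably shorter, so it is the more economical proof; but your cascade argument is sound once the continuity lemma is in place.
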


\begin{proof}
Since $\fr(X)<\infty$, the monotonicity of the Fadell--Rabinowitz index implies that every $S^1$-invariant neighborhood $V\subseteq X$ of $K$ satisfies $\fr(V)<\infty$ as well. Therefore,
\begin{align*}
 I := \big\{ \fr(V)\ \big|\ V\subseteq X \mbox{ is an $S^1$-invariant neighborhood of }K\big\}
\end{align*}
is a subset of the non-negative integers. In order to prove the lemma, we must show that $r:=\min I\leq q$.

We consider an $S^1$-invariant neighborhood $V\subseteq X$ of $K$ such that 
$\fr(V)=r$. Since $e_{V}^{r}\neq0$ and $e_{V}^{r+1}=0$, the Gysin exact sequence of the circle bundle~\eqref{e:pull_back_circle_bundle}, which reads
\begin{align*}
\dotsc
\ttoup^{\pi^*}  H^{2r+1}(V;\Q)
\ttoup^{\pi_*} H^{2r}_{S^1}(V;\Q)
\ttoup^{\smile e_{V}} H^{2r+2}_{S^1}(V;\Q)
\ttoup^{\pi^*} 
\dotsc,
\end{align*}
implies that there exists a non-zero $\mu\in H^{2r+1}(V;\Q)$ such that 
$\pi_*(\mu)=e_{V}^{r}$.

Let $W\subset V$ be a neighborhood of $K$ such that $H^{2p+1}(W;\Q)$ is trivial for all $p>q$. The assumptions that $X$ is a metric space and that $K$ is compact guarantee that there exists an $S^1$-invariant neighborhood $U\subseteq W$ of $K$. Notice that $\fr(U)=r$, since $r=\inf I$ and $\fr(U)\leq\fr(V)=r$. Let us consider the commutative diagram
\begin{align*}
  \xymatrix{
      H^{2r+1}(V;\Q) \ar[r]^{\iota_1^*} \ar[d]_{\pi_*} 
      & 
      H^{2r+1}(W;\Q) \ar[r]^{\iota_2^*}
      & 
      H^{2r+1}(U;\Q) \ar[d]^{\pi_*} 
      \\ 
      H^{2r}_{S^1}(V;\Q) \ar[rr]^{\iota_3^*} 
      & 
      & 
      H^{2r}_{S^1}(U;\Q)      
 }   
\end{align*}
Here, the horizontal homomorphisms are induced by the inclusions $W\subseteq V$, $U\subseteq W$, and $U\subseteq V$. Notice that 
\begin{align*}
0\neq e_{U}^{r}
=
\iota_3^*(e_V^{r})
=
\iota_3^*\circ\pi_*(\mu)
=
\pi_*\circ\iota_2^*\circ\iota_1^*(\mu).
\end{align*}
In particular, $H^{2r+1}(W;\Q)\neq0$, and therefore $r\leq q$.
\end{proof}

\section{Convex contact spheres}
\label{s:convex}

\subsection{Closed Reeb orbits on convex contact spheres}
\label{ss:closed_Reeb_orbits_convex_spheres}
Let $\Sigma\subset\R^{2n}$ be a convex contact sphere, $\lambda$ its canonical contact form, and $R$ its Reeb vector field. We are interested in the closed Reeb orbits of $(\Sigma,\lambda)$, i.e., the curves $\gamma:\R\to\Sigma$ which are solutions of
\begin{align}
\label{e:closed_Reeb}
\left\{
  \begin{array}{@{}l}
    \dot\gamma(t)=R(\gamma(t)), \\ 
    \gamma(0)=\gamma(\tau)\mbox{ for some minimal }\tau=\tau_\gamma>0. 
  \end{array}
\right.
\end{align}
The action spectrum of $\Sigma$ is the set of positive numbers
\begin{align*}
\sigma(\Sigma) := \big\{k\tau_\gamma\ \big|\ k\in\N,\ \gamma\mbox{ solution of \eqref{e:closed_Reeb}}\big\},
\end{align*}
where $\N=\{1,2,3, \dotsc\}$ is the set of natural numbers.

The solutions of~\eqref{e:closed_Reeb} can be studied by means of Clarke's variational principle, which we will briefly recall in the next subsection following Ekeland's monograph \cite{Ekeland:1990lc}; we refer the reader to \cite{Ekeland:1984aa, Ekeland:1987aa} for alternative, although analogous, approaches. A preliminary step consists in converting the problem~\eqref{e:closed_Reeb} into an analogous Hamiltonian periodic orbit problem with prescribed period. To this end, we fix once and for all $\alpha\in(1,2)$, and consider the Hamiltonian $H:\R^{2n}\to[0,\infty)$ which is positively homogeneous of degree $\alpha$ and such that $\Sigma=H^{-1}(1)$. Since $\Sigma$ is strongly convex and encloses the origin, such a Hamiltonian exists, is unique and smooth with positive-definite Hessian outside the origin. The non-trivial 1-periodic Hamiltonian orbits $\zeta:\R\to\R^{2n}$ of $H$ are the solutions of
\begin{align}
\label{e:closed_Hamiltonian}
\left\{
  \begin{array}{@{}l}
    \dot\zeta(t)=J\nabla H(\zeta(t)), \\ 
    \zeta(0)=\zeta(1)\neq0.
  \end{array}
\right.
\end{align}
There is a one-to-one correspondence between solutions $\gamma$ of~\eqref{e:closed_Reeb} and countable sequences of solutions $\{\gamma_k\ |\ k\in\N\}$ of \eqref{e:closed_Hamiltonian}, given by 
\begin{align}
\label{e:Hamiltonian_Reeb}
\gamma_k(t)=\big(\tfrac{2k\tau_\gamma}{\alpha}\big)^{1/(\alpha-2)} \gamma(k\tau_\gamma t).
\end{align}
In other words, $\gamma_k$ is a rescaling of the $k$-th iterate of $\gamma$.

\subsection{Variational setting}
Consider the exponent $\beta=\alpha/(\alpha-1)\in(2,\infty)$ which is H\"older conjugate to $\alpha$. The Legendre dual of $H$ is the function
\begin{align*}
H^*:\R^{2n}\to\R,\qquad
H^*(w)=\max_{z\in\R^{2n}}\Big( \langle z,w\rangle - H(z) \Big),
\end{align*}
which is positively homogeneous of degree $\beta$, smooth with positive-definite Hessian outside the origin, and $C^2$ at the origin. Setting $S^1=\R/\Z$, denote by $L^\beta_0(S^1,\R^{2n})$ the space of functions in $L^\beta(S^1,\R^{2n})$ with zero average. Namely, every such function is of the form $\dot\zeta$ for some $\zeta\in W^{1,\beta}(S^1,\R^{2n})$. The Clarke action functional is defined by
\begin{align*}
\Psi: L^\beta_0(S^1,\R^{2n})\to\R,
\qquad
\Psi(\dot\zeta) = \int_0^1 \Big(-\tfrac12\langle J\zeta(t),\dot\zeta(t)\rangle + H^*(-J\dot\zeta(t)) \Big)\diff t,
\end{align*}
The circle $S^1$ acts on the Banach space $L^\beta_0(S^1,\R^{2n})$ by the time-shift
\begin{align*}
 t\cdot \dot\zeta = \dot\zeta(t+\cdot),\qquad\forall t\in S^1,\ \dot\zeta\in L^\beta_0(S^1,\R^{2n}),
\end{align*}
and $\Psi$ is invariant under this action.

The Clarke action functional $\Psi$ satisfies all standard conditions required to apply the classical variational methods: it is $C^{1,1}$, bounded from below, and satisfies the Palais--Smale condition. Beside the origin, its critical points are precisely $\dot\zeta\in C^\infty(S^1;\R^{2n})$ admitting a primitive $\zeta$ which is a 1-periodic solution of the Hamiltonian system~\eqref{e:closed_Hamiltonian}. Therefore, in the notation from~\eqref{e:Hamiltonian_Reeb}, there is a one-to-one correspondence between solutions $\gamma$ of~\eqref{e:closed_Reeb} and sequences of critical circles 
\[ \bigcup_{k\in\N} S^1\cdot\dot\gamma_k\subset\crit(\Psi)\]
with associated critical values
\begin{align*}
\Psi(\dot\gamma_k) 
= - \big(1- \tfrac\alpha2 \big) H(\gamma_k(t))
= - \big(1- \tfrac\alpha2 \big) (2k\alpha^{-1}\tau_\gamma)^{-\alpha/(2-\alpha)}<0.
\end{align*}
The origin is thus the only critical point of $\Psi$ with non-negative critical value, and indeed $\Psi(0)=0$.
It is notationally convenient to renormalize $\Psi$ by introducing
\begin{align*}
\A:\{\Psi<0\}\to(0,\infty),
\qquad
\A(\dot\zeta) = \tfrac\alpha2 \big( \big(\tfrac{2}{\alpha-2}\big) \Psi(\dot\zeta) \big)^{\frac{\alpha-2}{\alpha}},
\end{align*}
so that $\crit(\A)=\crit(\Psi)\setminus\{\bm0\}$, and the critical value of each $\gamma_k$ is precisely the action of the $k$-th iterate of the associated closed Reeb orbit $\gamma$, i.e.
\begin{align*}
\A(\dot\gamma_k) = k\tau_\gamma \in \sigma(\Sigma).
\end{align*}

\subsection{Equivariant spectral invariants}
\label{ss:convex_spectral_invariants}

Let us recall the construction of the equivariant spectral invariants $\{c_i(\Sigma)\ |\ i\geq0\}$ for a convex contact sphere $\Sigma$, which is originally due to Ekeland and Hofer \cite{Ekeland:1987aa}. In the setting of the Clarke action functional \cite[Section~V.3]{Ekeland:1990lc}, such a construction is based on the fact that $\fr(\{\Psi<0\})=\infty$. Since $\Psi$ satisfies the Palais--Smale condition, for each integer $i\geq0$ the real number 
\begin{align*}
\tilde c_i(\Sigma):= \inf\big\{a\in\R\ \big|\ \fr(\{\Psi<a\})\geq i\big\}
\end{align*}
is a negative critical value of the Clarke action functional $\Psi$, and $\tilde c_i(\Sigma)\to0$ as $i\to\infty$.  Namely, the value
\begin{align*}
c_i(\Sigma)
:=
\tfrac\alpha2 \big( \big(\tfrac{2}{\alpha-2}\big) \tilde c_i(\Sigma) \big)^{(\alpha-2)/\alpha}
=
\inf\big\{a\in\R\ \big|\ \fr(\{\A<a\})\geq i\big\} 
\end{align*}
belongs to the action spectrum $\sigma(\Sigma)$, and
\begin{align}
\label{e:growth_spectral_invariants}
\lim_{i\to\infty} c_i(\Sigma)=\infty.
\end{align}

\subsection{Cohomology of neighborhoods of critical sets}

We denote by $\phi_R^t:\Sigma\to\Sigma$ the Reeb flow on $\Sigma$, i.e.\ $\phi_R^0=\id$ and $\tfrac{\diff}{\diff t}\phi_R^t=R\circ\phi_R^t$. We fix $\tau>0$, and consider the (possibly empty) compact subset
\begin{align*}
K:=\fix(\phi_R^\tau)\subseteq\Sigma.
\end{align*}
Namely, for each $z\in K$, the curve $\gamma_z(t):=\phi_R^t(z)$ is a $\tau$-periodic Reeb orbit. As follows from Equation~\eqref{e:Hamiltonian_Reeb}, $\gamma_z$ has an associated critical point $\dot\zeta_z\in\crit(\A)$ given by
\begin{align}
\label{e:zeta_z}
\zeta_z(t):=
(2\tau \alpha^{-1})^{1/(\alpha-2)} \gamma_z(\tau t),
\end{align}
and with critical value $\A(\dot\zeta_z)=\tau$. We set
\begin{align*}
\KK:=\big\{ \dot\zeta_z\in L^\beta_0(S^1,\R^{2n})\ \big|\ z\in K \big\}.
\end{align*}

\begin{lem}
\label{l:neighborhoods}
We have $\fr(\UU)\geq n-1$ for all sufficiently small $S^1$-invariant neighborhoods $\UU\subset L^\beta_0(S^1,\R^{2n})$ of $\KK$ if and only if $K=\Sigma$.
\end{lem}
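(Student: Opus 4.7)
My plan is to identify $\KK$ with $K$ as $S^1$-spaces (via $z\mapsto\dot\zeta_z$) and then read off $\fr$ from cohomological information about $K$. The map $\iota:K\to L^\beta_0(S^1,\R^{2n})$, $\iota(z)=\dot\zeta_z$, is continuous and injective (two loops $\zeta_z,\zeta_{z'}$ with identical derivative would differ by a constant, but both lie on the bounded level set $H^{-1}(c^\alpha)$ which is not translation-invariant), and it intertwines the reparametrized Reeb action $s\cdot z:=\phi_R^{\tau s}(z)$ on $K$ with the time-shift action on $\KK:=\iota(K)$. Since $K$ is compact, $\iota$ is an $S^1$-equivariant topological embedding. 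The Reeb $S^1$-action on $K$ is locally free (finite cyclic isotropies), so $H^*_{S^1}(K;\Q)\cong H^*(K/S^1;\Q)$ and $\fr(\KK)=\fr(K)$ equals the maximal $k$ with $e^k\neq 0$ in $H^{2k}(K/S^1;\Q)$, where $e$ denotes the Euler class of the orbifold $S^1$-bundle $K\to K/S^1$.

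If $K=\Sigma$, then $\Sigma$ is Besse with common period $\tau$, and $\Sigma\to\Sigma/S^1$ is a Boothby--Wang type $S^1$-orbibundle over a closed symplectic orbifold of dimension $2n-2$; the form $d\lambda$ descends to a symplectic form $\omega$ whose cohomology class is a nonzero rational multiple of $e$. Since $[\omega]^{n-1}$ is a volume class on the $(2n-2)$-dimensional quotient, $e^{n-1}\neq 0$ in $H^{2n-2}(\Sigma/S^1;\Q)$, giving $\fr(\KK)\geq n-1$. Monotonicity of the Fadell--Rabinowitz index under inclusion then yields $\fr(\UU)\geq\fr(\KK)\geq n-1$ for every $S^1$-invariant open neighborhood $\UU\supseteq\KK$, and in particular for all sufficiently small such $\UU$.

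Conversely, suppose $K\neq\Sigma$. Because $\Sigma$ is a closed connected orientable $(2n-1)$-manifold and $K\subsetneq\Sigma$ is compact, Alexander duality gives
\[
\check H^{2n-1}(K;\Q)\cong H_0(\Sigma,\Sigma\setminus K;\Q)=0
\]
(the right-hand group vanishes since $\Sigma\setminus K\neq\varnothing$ and $\Sigma$ is connected), together with the trivial vanishing $\check H^j(K;\Q)=0$ for $j\geq 2n$. Feeding $\check H^{2n-1}(K;\Q)=0$ into the Gysin sequence of the locally free $S^1$-bundle $K\to K/S^1$ makes the cup product $\smile e\colon\check H^{2n-2}(K/S^1;\Q)\to\check H^{2n}(K/S^1;\Q)$ injective. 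Since $K/S^1$ has cohomological dimension at most $2n-2$, the target vanishes, forcing $\check H^{2n-2}(K/S^1;\Q)=0$ and hence $e^{n-1}=0$. Therefore $\fr(\KK)=\fr(K)\leq n-2$. Small $S^1$-invariant neighborhoods $W$ of $\KK$ in the Banach space deformation retract onto $\KK$, so $H^{2p+1}(W;\Q)=\check H^{2p+1}(K;\Q)=0$ for all $p>n-2$; applying Lemma~\ref{l:bound_FR} with $X$ equal to any sufficiently small $S^1$-invariant neighborhood of $\KK$ (which has finite $\fr$ since it retracts onto $\KK$) then produces $\UU\subseteq X$ with $\fr(\UU)\leq n-2$. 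Since $X$ can be chosen inside any prescribed neighborhood of $\KK$, this yields arbitrarily small such $\UU$, contradicting the hypothesis $\fr(\UU)\geq n-1$ for all sufficiently small $\UU$.

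The main technical obstacle I anticipate is justifying the Gysin sequence and the equivariant deformation retractions when $K=\fix(\phi_R^\tau)$ is a possibly singular subset of $\Sigma$ rather than a smooth submanifold: this should be handled by working throughout with \v{C}ech cohomology of $K$ and by exploiting the compact absolute-neighborhood-retract structure of $\KK$ in the Banach space, so that small neighborhoods of $\KK$ admit the required equivariant retractions and their singular cohomology agrees with the \v{C}ech cohomology of $K$.
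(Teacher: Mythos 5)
Your ``$K=\Sigma$'' direction is fine: the Boothby--Wang picture, with the Euler class represented (up to scale) by the symplectic form on the quotient orbifold, is a legitimate variant of the paper's Gysin-sequence argument on the $(2n-1)$-sphere $\Sigma$.

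The converse direction has a genuine gap at the sentence ``Small $S^1$-invariant neighborhoods $W$ of $\KK$ in the Banach space deformation retract onto $\KK$.'' This is not true in general, and the difficulty you flag at the end is precisely where the argument breaks. The set $K=\fix(\phi_R^\tau)$ is merely a compact subset of $\Sigma$: it need not be a submanifold or even an ANR, so its homeomorphic image $\KK\subset L^\beta_0(S^1,\R^{2n})$ has no reason to be a neighborhood retract. Without some retraction or homotopy-equivalence statement, the \v{C}ech cohomology $\check H^*(K;\Q)$ only computes the direct limit $\varinjlim H^*(\NN_i;\Q)$ over shrinking neighborhoods; it does not tell you that any individual small neighborhood has vanishing $H^{2n-1}$. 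This claim is invoked twice -- once to feed the cohomological vanishing into Lemma~\ref{l:bound_FR}, and once to assert finiteness of $\fr(X)$ -- and both uses are unjustified. In addition, the Alexander-duality and Gysin computations you carry out control $\fr(\KK)$ itself, which is weaker than (and not directly useful for) controlling $\fr$ of open neighborhoods, which is what the lemma requires.

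The paper circumvents this by a concrete construction that you are missing. Choosing a Riemannian metric $g$ on $\Sigma$ with $g(R,\cdot)=\lambda$ makes the Reeb orbits unit-speed geodesics. For a small open neighborhood $W\subset\Sigma$ of $K$, one sends $z\in W$ to the loop $\tilde\gamma_z$ that follows $\phi_R^t(z)$ for $t\in[0,\tau-\epsilon]$ and closes up with the unique short geodesic from $\phi_R^{\tau-\epsilon}(z)$ back to $z$; on $K$ this returns the honest closed Reeb orbit. Pushing forward via $\iota$ yields a smooth $(2n-1)$-dimensional embedded submanifold $\WW\subset L^\beta_0(S^1,\R^{2n})$ containing $\KK$, whose tubular neighborhood $\NN$ is homotopy equivalent to $W$ and can be made arbitrarily small. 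Since $W\subsetneq\Sigma$ is a proper open subset of a closed $(2n-1)$-manifold, $H^{*\geq 2n-1}(W;\Q)=0$ by elementary topology; transporting this to $\NN$ supplies exactly the hypothesis of Lemma~\ref{l:bound_FR}, and the \v{C}ech-cohomology of $K$ plays no role. Your proof would need some substitute for this interpolation trick (or an argument that $\fix(\phi_R^\tau)$ is always an ANR, which is far from clear) to go through.
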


\begin{proof}
We define the smooth map
\begin{align*}
\iota: W^{1,\beta}(\R/\tau\Z,\Sigma)\to L^\beta_0(S^1,\R^{2n})
\end{align*}
by $\iota(\gamma)= \dot\zeta$, where 
$\zeta(t)=(2\tau \alpha^{-1})^{1/(\alpha-2)} \gamma(\tau t)$. In particular, using the notation from Equation~\eqref{e:zeta_z}, $\iota(\gamma_z)=\dot\zeta_z$.

It is well known that the Reeb vector fields are geodesible. More specifically, if we define $g$ to be any Riemannian metric on $\Sigma$ such that $g(R,\cdot)=\lambda$, then the orbits of the Reeb flow $\phi_R^t:\Sigma\to\Sigma$ are unit-speed geodesics of the Riemannian manifolds $(\Sigma,g)$. We fix one such $g$ from now on.
Let $\epsilon>0$ be a small enough quantity such that, for each $z\in \Sigma$ and $t\in[0,\epsilon]$, we have $d_g(z,\phi_R^t(z))<\injrad(\Sigma,g)$. Here, $d_g$ and $\injrad(\Sigma,g)$ denote the Riemannian distance and the injectivity radius of $(\Sigma,g)$ respectively. We choose an arbitrarily small open neighborhood $W\subseteq\Sigma$ of the compact subset $K$, which is small enough so that 
\begin{align*}
d_g(\phi_R^{\tau-\epsilon}(z),z)<\injrad(\Sigma,g),
\qquad
\forall z\in W.
\end{align*}
For each $z\in W$, we define $\tilde\gamma_z\in W^{1,\beta}(\R/\tau\Z,\Sigma)$ to be the curve such that $\tilde\gamma_z(t)=\phi_R^t(z)$ for all $t\in[0,\tau-\epsilon]$, and $\gamma_z|_{[\tau-\epsilon,\tau]}$ is the unique shortest geodesic of $(\Sigma,g)$ parametrized with constant speed and joining $\phi_R^{\tau-\epsilon}(z)$ and $z$. Notice that, since the Reeb orbits of $\Sigma$ are unit-speed geodesics of $(\Sigma,g)$, we have 
\[\tilde\gamma_z(t)=\gamma_z(t)=\phi_R^t(z),\qquad \forall z\in K,\ t\in\R/\tau\Z.\] 
After shrinking $W$ if necessary, we see that the map 
\[ 
\tilde\gamma:W\hookrightarrow W^{1,\beta}(\R/\tau\Z,\Sigma),
\qquad
\tilde\gamma(z)=\tilde\gamma_z
\] 
is a smooth embedding. The space
$\WW := \iota\circ\tilde\gamma(W)$
is thus a $(2n-1)$-dimensional smooth submanifold of $L^\beta_0(S^1,\R^{2n})$ diffeomorphic to the  open subset $W\subseteq\Sigma$. We denote by 
\begin{align}
\label{e:tubular_nbhd}
\NN\subset L^\beta_0(S^1,\R^{2n})
\end{align}
a tubular neighborhood of $\WW$, which is thus homotopy equivalent to $W$. In particular,
\begin{align*}
H^{2n-1}(\NN;\Q)\cong
H^{2n-1}(\WW;\Q)\cong
H^{2n-1}(W;\Q).
\end{align*}
Both $\WW$ and $\NN$ can be chosen arbitrarily small, and thus this construction provides a fundamental system of open neighborhoods of the critical set $\KK$.

If $K\neq \Sigma$, then $K$ admits an arbitrarily small open neighborhood $W\subsetneq\Sigma$, and in particular $H^{*\geq 2n-1}(W;\Q)=0$. Therefore, by the previous paragraph, $\KK$ admits an arbitrarily small neighborhood $\NN\subset L^\beta_0(S^1,\R^{2n})$ with $H^{*\geq 2n-1}(\NN;\Q)=0$. We can thus apply the abstract Lemma~\ref{l:bound_FR}, and infer that $\KK$ admits an $S^1$-invariant neighborhood $\UU\subset L^\beta_0(S^1,\R^{2n})$ with $\fr(\UU)<n-1$.

Finally, if $K=\Sigma$, every point $z\in\Sigma$ lies on a closed Reeb orbit of period $\tau$. Therefore, for each $z\in\Sigma$, the curve 
$\dot\zeta_z\in L^\beta_0(S^1,\R^{2n})$ is a critical point of $\A$ with critical value $\A(\dot\zeta_z)=\tau$. The time-rescaled Reeb flow $t\mapsto\phi_R^{\tau t}$ defines a locally-free $S^1$ action on $\Sigma$, and the homeomorphism 
\[\Sigma\to \KK,\qquad z\mapsto \dot\zeta_z\] 
is $S^1$-equivariant. This implies that
$\fr(\KK) = \fr(\Sigma)$. The Gysin sequence
\begin{align*}
...\ttoup^{\pi^*} H^{*+1}(\Sigma;\Q) \ttoup^{\pi_*} H_{S^1}^*(\Sigma;\Q) \ttoup^{\smile e_\Sigma} H_{S^1}^{*+2}(\Sigma;\Q) \ttoup^{\pi^*} H^{*+2}(\Sigma;\Q)\ttoup^{\pi_*} ...
\end{align*}
of the circle bundle $\pi:\Sigma\times ES^1\to \Sigma\times_{S^1} ES^1$ readily implies that $e_\Sigma^{n-1}\neq0$ in $H^{2n-2}_{S^1}(\Sigma;\Q)$, and thus \[\fr(\KK) = \fr(\Sigma)\geq n-1.\] 
 We conclude that every $S^1$-invariant neighborhood $\UU\subset L^\beta_0(S^1,\R^{2n})$ has Fadell--Rabinowitz index $\fr(\UU)\geq \fr(\KK)\geq n-1$.
\end{proof}

\subsection{Spectral characterization of the Besse and Zoll conditions}

We can now prove one implication in Theorem~\ref{t:Besse_convex}.

\begin{lem}
\label{l:Besse_convex_1}
Let $\Sigma\subset\R^{2n}$ be a convex contact sphere. If $c_i(\Sigma)=c_{i+n-1}(\Sigma)$ for some $i\geq0$, then $\Sigma$ is Besse and every Reeb orbit has (not necessarily minimal) period $c_i(\Sigma)$.
\end{lem}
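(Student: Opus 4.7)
\medskip

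The plan is to argue by contradiction: set $\tau:=c_i(\Sigma)=c_{i+n-1}(\Sigma)$ and $K:=\fix(\phi_R^\tau)$, with associated critical set $\KK\subset L^\beta_0(S^1,\R^{2n})$ of the Clarke functional $\A$ at level $\tau$, and suppose that $K\neq\Sigma$. Then Lemma~\ref{l:neighborhoods} furnishes an $S^1$-invariant neighborhood $\UU$ of $\KK$ with $\fr(\UU)\leq n-2$. I will combine this with the min-max definition of $c_i$ and $c_{i+n-1}$ via the Fadell--Rabinowitz index, together with the Lusternik--Schnirelmann-type deformation, to force the inequality $i+n-1\leq i+n-2$, which is absurd.

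More precisely, since $\A$ is $C^{1,1}$, bounded below and satisfies the Palais--Smale condition on $\{\Psi<0\}$, the standard equivariant deformation lemma applies at the critical level $\tau$: for $\epsilon>0$ small enough, the open sublevel set $\{\A<\tau+\epsilon\}$ admits an $S^1$-equivariant deformation retract onto $\{\A<\tau-\epsilon\}\cup\UU$. By monotonicity and subadditivity~\eqref{e:subadditivity_FR} of the Fadell--Rabinowitz index, this yields
\begin{align*}
\fr(\{\A<\tau+\epsilon\})
\leq
\fr(\{\A<\tau-\epsilon\})+\fr(\UU)+1.
\end{align*}
From $c_i(\Sigma)=\tau$ we get $\fr(\{\A<\tau-\epsilon\})\leq i-1$ (any class witnessing $\fr\geq i$ would force $c_i(\Sigma)\leq\tau-\epsilon<\tau$), while from $c_{i+n-1}(\Sigma)=\tau$ and the definition as an infimum we get $\fr(\{\A<\tau+\epsilon\})\geq i+n-1$. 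Plugging these into the inequality above, together with $\fr(\UU)\leq n-2$, gives
\begin{align*}
i+n-1\leq (i-1)+(n-2)+1=i+n-2,
\end{align*}
the desired contradiction. Hence $K=\Sigma$, so every point of $\Sigma$ lies on a Reeb orbit of period $\tau$; in particular $\Sigma$ is Besse with common period $\tau=c_i(\Sigma)$.

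The main technical step I expect to need care with is the deformation at the critical level $\tau$. One must know that the critical set of $\A$ at level $\tau$ is precisely $\KK$ (so that an isolating neighborhood of $\KK$ really captures all critical behavior near level $\tau$), and that the deformation can be made $S^1$-equivariant and smooth enough to preserve the Fadell--Rabinowitz index computation; this is available in Ekeland's $C^{1,1}$ framework \cite{Ekeland:1990lc} and is the same kind of argument that underlies the very definition of the spectral invariants $c_i(\Sigma)$. The other minor point is the left-continuity of $\fr$ on open sublevel sets used to pass from the strict inequalities $a<\tau$ in the infimum to the bound $\fr(\{\A<\tau-\epsilon\})\leq i-1$ for small $\epsilon$, which follows directly from monotonicity.
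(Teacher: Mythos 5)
Your proof is correct and follows essentially the same route as the paper: the paper invokes the Lusternik--Schnirelmann theorem \cite[Theorem~1.1]{Viterbo:1997pi} as a black box to conclude that every $S^1$-invariant neighborhood of $\crit(\A)\cap\A^{-1}(c_i(\Sigma))$ has Fadell--Rabinowitz index at least $n-1$, and then applies Lemma~\ref{l:neighborhoods}; you argue in the contrapositive and unpack the Lusternik--Schnirelmann step via the equivariant deformation lemma and the subadditivity~\eqref{e:subadditivity_FR} of the Fadell--Rabinowitz index. The deformation step you flag is indeed legitimate: the Palais--Smale condition guarantees that, after $\UU$ is fixed, one can take $\epsilon$ small enough that all critical points of $\A$ with values in $(\tau-\epsilon,\tau+\epsilon)$ lie in $\UU$ (otherwise Palais--Smale would produce a critical point of $\KK$ outside the open set $\UU$), which is exactly what is needed for the anti-gradient flow to carry $\{\A<\tau+\epsilon\}$ into $\{\A<\tau-\epsilon\}\cup\UU$.
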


\begin{proof}
We recall that $c_i(\Sigma)$ is obtained as the minmax of $\A$ associated to the $i$-th power of the Euler class of the circle bundle \eqref{e:pull_back_circle_bundle} with $X=L^\beta_0(S^1,\R^{2n})$.
Therefore, if $c_i(\Sigma)=c_{i+n-1}(\Sigma)$,  the classical Lusternik--Schnirelmann theorem \cite[Theorem~1.1]{Viterbo:1997pi} implies that every $S^1$-invariant neighborhood $\UU\subset L^\beta_0(S^1,\R^{2n})$ of $\crit(\A)\cap\A^{-1}(c_i(\Sigma))$ has Fadell--Rabinowitz index $\fr(\UU)\geq n-1$. By Lemma~\ref{l:neighborhoods}, every point $z\in\Sigma$ must lie on a closed Reeb orbit of period $c_i(\Sigma)$.
\end{proof}

We postpone the proof of the other implication in Theorem~\ref{t:Besse_convex}, and we first deal with the remaining statements concerning convex contact spheres.

\begin{proof}[Proof of Corollary~\ref{c:Zoll_convex}]
If $c_0(\Sigma)=c_{n-1}(\Sigma)$, then Lemma~\ref{l:Besse_convex_1} implies that $\Sigma$ is Besse, and every Reeb orbit has period $c_0(\Sigma)$. Since $c_0(\Sigma)=\min\A=\sys(\Sigma)$, it must actually be the minimal period of each Reeb orbit. Hence $\Sigma$ is Zoll.

The opposite implication is well known and analogous to the last paragraph in the proof of Lemma~\ref{l:neighborhoods}. Indeed, if $\Sigma$ is Zoll, then for each $z\in\Sigma$ the curve 
\[
\dot\zeta_z\in L^\beta_0(S^1,\R^{2n}), 
\qquad
\zeta_z(t)=(2c_0(\Sigma) \alpha^{-1})^{1/(\alpha-2)} \phi_R^{c_0(\Sigma) t}(z)
\]
is a global minimizer of $\A$, i.e.,
\[\A(\dot\zeta_z)=\min\A=c_0(\Sigma).\] 
The time-rescaled Reeb flow $t\mapsto\phi_R^{c_0(\Sigma) t}$ defines a free $S^1$ action on $\Sigma$, and the homeomorphism 
\[\Sigma\to \A^{-1}(c_0(\Sigma)),\qquad z\mapsto\dot\zeta_z\] 
is $S^1$-equivariant. Therefore,
\begin{align*}
\fr(\A^{-1}(c_0(\Sigma))) = \fr(\Sigma) = n-1,
\end{align*}
which readily implies $c_{n-1}(\Sigma)=c_0(\Sigma)$.
\end{proof}

\begin{proof}[Proof of Theorem~\ref{t:Zoll_pinched}]
If $\Sigma$ is Zoll, then $\sigma(\Sigma)=\{k\,\sys(\Sigma)\ |\ k\in\N\}$ and thus the intersection $\sigma(\Sigma)\cap(\sys(\Sigma),2\,\sys(\Sigma))$ is empty. Conversely, for $\delta\in(1,\sqrt2]$, assume that the action spectrum of a convex $\delta$-pinched contact sphere $\Sigma\subset\R^{2n}$ satisfies $\sigma(\Sigma)\cap(\sys(\Sigma),\delta^2\sys(\Sigma))=\varnothing$. We now argue building on the work of Ekeland and Lasry \cite{Ekeland:1980aa}, and we will follow the exposition in Ekeland's monograph \cite[Section~V.2]{Ekeland:1990lc}. We choose $0<r<R$ such that $R/r<\delta$ and the filling $C\subset\R^{2n}$ of $\Sigma$ satisfies $B^{2n}(r)\subseteq C\subseteq B^{2n}(R)$. We consider the Hamiltonian $H:\R^{2n}\to[0,\infty)$ which is positively homogeneous of degree $\alpha\in(1,2)$ and satisfies $H^{-1}(1)=\Sigma$, its Legendre dual $H^*:\R^{2n}\to[0,\infty)$ which is positively homogeneous of degree $\beta=\alpha/(\alpha-1)$, and the associated renormalized Clarke action functional $\A$. The function $H^*$ is squeezed between the corresponding dual functions associated to the round spheres $\partial B^{2n}(r)$ and $\partial B^{2n}(R)$, i.e.,
\begin{align}
\label{e:sandwich_H*}
\beta^{-1} (r^{\alpha}/\alpha)^{\beta-1} \|u\|^\beta
\leq
H^*(u)
\leq
\beta^{-1} (R^{\alpha}/\alpha)^{\beta-1} \|u\|^\beta.
\end{align}
The left-most inequality readily implies 
\begin{align*}
\pi r^2\leq \sys(\Sigma).
\end{align*}
Consider now the unit sphere $S^{2n-1}\subset\R^{2n}$ equipped with the usual Hopf circle action
$t\cdot z= e^{2\pi Jt}z$ for all $t\in S^1$ and $z\in S^{2n-1}$.
The smooth embedding
\begin{align*}
\iota:S^{2n-1}\hookrightarrow L^\beta_0(S^1,\R^{2n}),
\qquad
\iota(z)(t)= (2\pi)^{\frac{1-\alpha}{2-\alpha}} (\alpha/R^\alpha)^{\frac{1}{2-\alpha}} e^{2\pi Jt} z
\end{align*}
is $S^1$-equivariant, and the right-most inequality in~\eqref{e:sandwich_H*} implies 
\begin{align*}
\max\A\circ\iota\leq\pi R^2.
\end{align*}
Since $\fr(S^{2n-1})=n-1$, we readily infer that 
\begin{align*}
\fr(\{\A\leq\pi R^2\})\geq \fr(\iota(S^{2n-1}))=\fr(S^{2n-1})=n-1,
\end{align*}
and therefore
\begin{align}
\label{e:bound_c_n-1}
c_{n-1}(\Sigma)
\leq
\pi R^2
<
\delta^2 \pi r^2
\leq
\delta^2\sys(\Sigma)
.
\end{align}
Since $c_0(\Sigma)\leq c_{n-1}(\Sigma)$ and $\sigma(\Sigma)\cap(\sys(\Sigma),\delta^2\sys(\Sigma))=\varnothing$, the inequality \eqref{e:bound_c_n-1} implies that $c_{n-1}(\Sigma)=\sys(\Sigma)=c_0(\Sigma)$, and Corollary~\ref{c:Zoll_convex} implies that $\Sigma$ is Zoll.
\end{proof}

\subsection{Morse indices of the Clarke action functional}\label{ss:Morse_indices_Clarke}
 
The fact that the Clarke action functional $\Psi$ (and its renormalized version $\A$) is not $C^2$ does not pose any problem for its Morse theory. Actually, Ekeland and Hofer, \cite{Ekeland:1987aa}, showed that, after applying a suitable saddle point reduction, $\Psi$ becomes $C^2$. Without entering into the technical details of this procedure, let us quickly recall the definition and properties of the Morse indices of $\Psi$. Let $\gamma:\R/\tau\Z\to\Sigma$ be a closed Reeb orbit and $\dot\zeta$ be the associated critical point of $\A$ given by
\[\zeta(t)=(2\tau \alpha^{-1})^{1/(\alpha-2)} \gamma(\tau t)\]
so that $\A(\dot\zeta)=\tau$.
Consider the quadratic function $Q:L^2_0(S^1;\R^{2n})\to\R,$
\begin{align*}
Q(\dot\eta)=\int_0^1 \Big( \langle J\eta(t),\dot\eta(t)\rangle + \nabla^2 H^*(-J\dot\zeta(t))[\dot\eta(t),\dot\eta(t)] \Big)\,\diff t.
\end{align*}
We define the Morse index $\ind(\dot\zeta)$ and the nullity $\nul(\dot\zeta)$, respectively, as the dimension of the negative eigenspace and of the kernel of the bounded self-adjoint operator $P$ on $L^2_0(S^1;\R^{2n})$ associated to the quadratic form $Q$, i.e.\ $Q(\dot\eta)=\langle P\dot\eta,\dot\eta\rangle_{L^2}$. Even though these are not exactly the standard definitions, it turns out that both indices are finite and the ordinary results from Morse theory apply to $\A$ with these two indices; see \cite[Section~IV.3]{Ekeland:1990lc}.

We denote by $\phi_H^t:\R^{2n}\to\R^{2n}$ the Hamiltonian flow of $H$, i.e.\ $\phi_H^0=\id$ and $\tfrac{\diff}{\diff t}\phi_H^t=J\nabla H\circ\phi_H^t$. Recall that $\zeta(t)=\phi_H^t(\zeta(0))$ is the 1-periodic Hamiltonian orbit associated to $\gamma$. On the energy hypersurface $\Sigma=H^{-1}(1)$, the $\tau$-periodic Reeb orbit $\gamma$ can be reparametrized into a $\tfrac2\alpha\tau$-periodic Hamiltonian orbit, for $\phi_H^t(\gamma(0))=\gamma(\tfrac\alpha2t)$.
We consider the continuous path of symplectic matrices 
\[
\Gamma_\alpha:[0,1]\to\Sp(2n),
\qquad
\Gamma_\alpha(t)=\diff\phi_H^{2\alpha^{-1}\tau t}(\gamma(0)).
\]
Here we write $\Gamma_\alpha$ with a subscript $\alpha$ in order to stress its dependence from $\alpha$, for the Hamiltonian $H$ is the positively homogeneous function of degree $\alpha$ such that $H^{-1}(1)=\Sigma$. The path $\Gamma_\alpha$ has a well defined \textbf{Conley--Zehnder index} $\cz(\Gamma_\alpha)$, which can be defined as follows. We denote by $K_n:=\big\{M\in\Sp(2n)\ \big|\ \det(M-I)=0\big\}$ the Maslov cycle. The symplectic group has fundamental group $\pi_1(\Sp(2n))\cong\Z$, and the complement $\Sp(2n)\setminus K_n$ has two connected components. If $\Gamma_\alpha$ is a ``non-degenerate'' path, i.e., $\Gamma_\alpha(1)\not\in K_n$ and $\Gamma_\alpha|_{(0,1)}$ intersects $K_n$ only outside its singular locus and transversely, then $\cz(\Gamma_\alpha)$ is the suitably oriented intersection number of $\Gamma_\alpha|_{(0,1)}$ and $K_n$. The index of a degenerate path is then defined in such a way that the function $\Psi\mapsto\cz(\Psi)$ is the point-wise largest lower semicontinuous extension of the index function on the space of non-degenerate paths. It was proved by Brousseau \cite{Brousseau:1990aa} (see also \cite[Lemmas~1.3-4]{Long:1998aa}) that the Morse indices of the Clarke's action functional are related to the indices of $\Gamma_\alpha$ as
\begin{align}
\label{e:Morse_CZ}
\ind(\dot\zeta)=\cz(\Gamma_\alpha)-n,
\qquad
\nul(\dot\zeta)=\dim\ker(\Gamma_\alpha(1)-I).
\end{align}
By \cite[Proposition~I.7.5]{Ekeland:1990lc}, the Conley--Zehnder index $\cz(\Gamma_\alpha)$ is actually independent of $\alpha\in(1,2]$, i.e.
\begin{align*}
 \cz(\Gamma_\alpha)=\cz(\Gamma_2),\qquad\forall\alpha\in(1,2).
\end{align*}
Notice that up to now we have only considered $\alpha\in(1,2)$, but the symplectic path $\Gamma_\alpha$ is perfectly defined for $\alpha=2$ as well. Moreover, according to a result due to Ekeland \cite[Theorem~I.4.6]{Ekeland:1990lc}, the Morse index can be computed by suitably counting the multiplicity of the conjugate points along the symplectic path $\Gamma_\alpha$ as
\begin{align}
\label{e:Morse_index_formula}
\ind(\dot\zeta)=\sum_{t\in(0,1)} \dim\ker(\Gamma_\alpha(t)-I).
\end{align}

Since the Hamiltonian $H$ is positively homogeneous of degree $\alpha\in(1,2)$, the map $\Gamma_\alpha(1)$ preserves the symplectic vector subspace $E:=\textrm{span}\{\dot\gamma(0),\gamma(0)\}$, and indeed
\begin{align*}
\Gamma_\alpha(1)\dot\gamma(0)=\dot\gamma(0),
\qquad
\Gamma_\alpha(1)\gamma(0)=(\alpha-2)\tau\dot\gamma(0)+\gamma(0).
\end{align*}
We denote as usual by 
\[E^\omega=\big\{v\in\R^{2n}\ \big|\ \langle v,Jw\rangle=0\quad \forall w\in E\big\}\] 
the symplectic orthogonal to $E$. Notice that $E^\omega\subset\Tan_{\gamma(0)}H^{-1}(1)$, and if we consider the decomposition $\R^{2n}=E\oplus E^\omega$, the symplectic matrix $\Gamma_\alpha(1)$ can be written in symplectic blocks as
\begin{align}
\label{e:Gamma_alpha_1}
\Gamma_\alpha(1)
=
\left(
  \begin{array}{@{}cc@{}}
    M_\alpha & 0 \\ 
    0 & N 
  \end{array}
\right).
\end{align}
The first block is given by
\begin{align}
\label{e:M_alpha}
M_\alpha=
\left(
  \begin{array}{@{}cc@{}}
    1 & (\alpha-2)\tau \\ 
    0 & 1
  \end{array}
\right),
\end{align}
whereas the second block $N=\diff\phi_R^{\tau}(\gamma(0))|_{E^\omega}$ is independent of $\alpha$; see \cite[pages~69--72]{Ekeland:1990lc}.

If $\Sigma$ is Besse and $\KK\subset\crit(\A)$ is a connected component, then the Morse index $\ind(\dot\zeta)$ and the nullity $\nul(\dot\zeta)$ are the same for all $\dot\zeta\in\KK$, and therefore we will simply write them as $\ind(\KK)$ and $\nul(\KK)$ respectively.
We will need the following remark concerning the Morse index. In the context of geodesic flows, an analogous statement was proved by Wilking \cite[Theorem~3]{Wilking:2001pi}.

\begin{lem}
\label{l:Besse_indices}
If $\Sigma$ is Besse, then every connected component $\KK\subset\crit(\A)$ is an odd dimensional closed manifold with nullity $\nul(\KK)=\dim(\KK)$ and even Morse index $\ind(\KK)$.
\end{lem}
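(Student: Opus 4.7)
The plan is to identify each connected component $\KK\subset\crit(\A)$ at action level $\tau\in\sigma(\Sigma)$ with a connected component of the fixed-point set $\fix(\phi_R^\tau)\subseteq\Sigma$ via the $S^1$-equivariant map $z\mapsto\dot\zeta_z$ from~\eqref{e:zeta_z}, and then read off the smooth structure, the nullity equality, and the two parity statements from this geometric model. By Wadsley's theorem, the Besse hypothesis provides a common Reeb period $\tau_{\max}>0$ with $\phi_R^{\tau_{\max}}=\id$, so the reparametrized Reeb flow defines a smooth $S^1$-action on $\Sigma$ under which $\fix(\phi_R^\tau)$ is the fixed set of a single group element, hence a closed smooth $S^1$-invariant submanifold. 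The map $z\mapsto\dot\zeta_z$ is an $S^1$-equivariant smooth embedding, so $\KK$ inherits the structure of a closed smooth manifold.

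For the nullity equality $\nul(\KK)=\dim\KK$, I would compare two descriptions of the same infinitesimal fixed-point space. From~\eqref{e:Morse_CZ} and the block decomposition~\eqref{e:Gamma_alpha_1}--\eqref{e:M_alpha} with $\alpha\in(1,2)$, one has $\nul(\dot\zeta_z)=\dim\ker(\Gamma_\alpha(1)-I)=1+\dim\ker(N-I)$, the ``$1$'' coming from the unipotent block $M_\alpha$ (spanned by $R(z)$). On the other hand, since $d\phi_R^\tau(z)$ preserves the splitting $T_z\Sigma=\R R(z)\oplus\xi_z$ and fixes $R(z)$, the tangent space $T_z\fix(\phi_R^\tau)=\R R(z)\oplus\ker(N-I)$ has the same dimension.

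For the odd parity of $\dim\KK$, I would use the finite-order property of $N$. Since $\phi_R^{\tau_{\max}}=\id$, the map $N\in\Sp(\xi_z)$ is of finite order, and a standard averaging argument shows that any finite-order element of $\Sp(2n-2,\R)$ is conjugate within this group to an element of the maximal compact subgroup $U(n-1)$. Since the eigenvalues of a unitary matrix come in complex conjugate pairs $e^{\pm i\theta}$, the eigenvalue $1$ occurs with even multiplicity; hence $\dim\ker(N-I)$ is even and $\dim\KK=1+\dim\ker(N-I)$ is odd.

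The hard part is the even parity of $\ind(\KK)$, equivalent via~\eqref{e:Morse_CZ} to $\cz(\Gamma_\alpha)\equiv n\pmod 2$. My plan is to adapt Wilking's argument \cite[Theorem~3]{Wilking:2001pi} from the Riemannian setting to the contact setting: the linearized Poincar\'e map $N_0=d\phi_R^{\tau_0}(z)|_{\xi_z}$ at the minimal period $\tau_0$ is of finite order and conjugate to an element of $U(n-1)$, and so determines an $N_0$-invariant symplectic splitting of $\xi_z$ into real 2-planes, which extends to an $S^1$-equivariant splitting of the symplectic bundle $\xi|_\gamma$ into rank-$2$ sub-bundles preserved by the linearized Reeb flow. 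In a trivialization adapted to this splitting, the Conley--Zehnder index can be computed via Ekeland's formula~\eqref{e:Morse_index_formula} as a sum of contributions over the 2-planes (plus a contribution from the $E$-block controlled by the Jordan form $M_\alpha$), and on each 2-plane the contribution is an even multiple of a suitable winding count determined by the rotation angle of the corresponding eigenvalue of $N_0$. The delicate point, and the source of the main difficulty, is that for $\alpha\in(1,2)$ the path $\Gamma_\alpha(t)$ does not preserve the decomposition $E\oplus\xi_z$ at intermediate times $t\in(0,1)$, so reducing the calculation to the endpoint block form requires either a direct analysis of the index formula or an appeal to the homotopy invariance of $\cz$ modulo $2$.
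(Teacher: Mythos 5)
Your treatment of the manifold structure and the nullity equality is sound and essentially matches what the paper does (the paper outsources the odd-dimensionality and the formula $\dim\KK = 1+\dim\ker(N-I)$ to \cite[Section~4.1]{Cristofaro-Gardiner:2019aa}, while you give a self-contained argument). Two small remarks on your odd-dimensionality step: the relevant endomorphism is $N = \diff\phi_R^\tau(z)|_{E^\omega}$ at the period $\tau$ under consideration, not the return map at the minimal period $\tau_0$ (this is harmless since $N = N_0^m$ is still finite order), and your parenthetical reason -- ``eigenvalues of a unitary matrix come in complex conjugate pairs $e^{\pm i\theta}$'' -- is not the mechanism. A unitary matrix need not have conjugate-symmetric spectrum. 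What makes $\dim\ker(N-I)$ even is that after conjugating $N$ into $U(n-1)$ the $1$-eigenspace is a complex subspace; equivalently, $N$ finite order is semisimple and for a semisimple symplectic matrix the $1$-eigenspace is a symplectic subspace.

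The genuine gap is in the even-parity claim for $\ind(\KK)$. You correctly flag the obstacle: the path $\Gamma_\alpha(t)$ does not respect the splitting $E\oplus E^\omega$ for $t\in(0,1)$, so Ekeland's formula~\eqref{e:Morse_index_formula} cannot simply be distributed over 2-plane sub-bundles, and the Wilking-style trivialization of $\xi|_\gamma$ into invariant rank-2 sub-bundles does not give you a block-diagonal path to feed into that formula. You then end by saying the fix is ``either a direct analysis of the index formula or an appeal to the homotopy invariance of $\cz$ modulo~$2$,'' but you do not carry either out, so the argument as written is not a proof. The second alternative is in fact exactly what the paper does, and once you use it the 2-plane machinery becomes superfluous. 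The paper observes that for paths $\Psi$ with $\Psi(0)=I$ the class $\parity(\Psi(1)) := \cz(\Psi)\bmod 2$ depends only on the endpoint (any two such paths differ by loops, each contributing $\pm 2$), and that $\parity$ is invariant under symplectic conjugation and additive over symplectic blocks. It then invokes $\cz(\Gamma_\alpha)=\cz(\Gamma_2)$ from \cite[Prop.~I.7.5]{Ekeland:1990lc} to replace $M_\alpha$ by $M_2=I_2$, conjugates $N$ into $N_1\oplus N_2$ with $N_1 = I_{2n_1}$ and $\sigma(N_2)\subset S^1\setminus\{1\}$ (using $N^k=I$ from Wadsley), and reads off $\parity(I_{2n_1})=n_1$, $\parity(N_2)=\parity(-I_{2n_2})=n_2$, $\parity(I_2)=1$ from the model paths $e^{2\pi Jt}$ and $e^{\pi Jt}$. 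This evaluates the parity of $\Gamma_\alpha(1)$ directly at the endpoint, never needing to track the path at intermediate times, and gives $\ind(\dot\zeta)\equiv (1+n_1+n_2-n)\equiv 0 \pmod 2$. To complete your proposal you would need to replace the 2-plane index bookkeeping by this endpoint-only parity argument (or supply the missing ``direct analysis'').
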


\begin{proof}
Assume that $\Sigma$ is Besse, and let $\KK\subset\crit(\A)$ be a connected component of the critical set. We employ the notation of the previous paragraphs with respect to an arbitrary critical point $\zeta\in\KK$. It is well known that $\KK$ is an odd dimensional closed manifold, and if we consider the symplectic matrix $N$ in~\eqref{e:Gamma_alpha_1} we have $\dim(\KK)=1+\dim\ker(N-I)$; see \cite[Section~4.1]{Cristofaro-Gardiner:2019aa}. Since $\alpha\in(1,2)$, we conclude that 
\[\nul(\dot\zeta)=\dim\ker(\Gamma_\alpha(1)-1)=1+\dim\ker(N-I)=\dim(\KK).\]

As for the index, let us first remark that the parity of the Conley--Zehnder index of a symplectic path $\Psi:[0,1]\to\Sp(2n)$ with $\Psi(0)=I$ only depends on the final point $\Psi(1)$. Indeed, if $\Upsilon:[0,1]\to\Sp(2n)$ is another continuous path such that $\Upsilon(0)=I$ and $\Upsilon(1)=\Psi(1)$, then $\Upsilon$ is homotopic (with fixed endpoints) to the concatenation of $\Psi$ with a certain number of positive and negative full turns in $\Sp(2n)$, and every such full turn contributes to the index with a summand $\pm2$. Therefore, if $M:=\Psi(1)\in\Sp(2n)$, we can define
\begin{align*}
\parity(M):=\cz(\Psi)\bmod 2 \in\{0,1\}.
\end{align*}

We already mentioned that $\cz(\Gamma_\alpha)=\cz(\Gamma_2)$. We readily see from~\eqref{e:M_alpha} that $M_2=I_2\in\Sp(2)$.
By Wadsley's theorem \cite{Wadsley:1975sp}, there exists an integer $k\geq1$ such that $k\tau$ is a common period for the closed Reeb orbits. Therefore, in~\eqref{e:Gamma_alpha_1} the symplectic matrix $N$ must satisfy $N^k=I_{2n-2}\in\Sp(2n-2)$. In particular, we can find a symplectic matrix $P\in\Sp(2n)$ such that  $PNP^{-1}$ decomposes in symplectic blocks
\begin{align*}
PNP^{-1}
=
\left(
  \begin{array}{cc}
    N_1 & 0 \\ 
    0 & N_2 \\ 
  \end{array}
\right)
\end{align*}
where $N_1=I_{2n_1}\in\Sp(2n_1)$, and $N_2\in\Sp(2n_2)$ is a symplectic matrix whose eigenvalues are all contained in $S^1\setminus\{1\}$. Here, $S^1$ denotes the unit circle in the complex plane. Since the Conley--Zehnder index is invariant under symplectic conjugation and is additive on symplectic blocks, we have
\begin{align*}
\parity(N) = (\parity(N_1) + \parity(N_2))\bmod 2.
\end{align*}
Since $\sigma(N_2)\subset S^1\setminus\{1\}$, the symplectic matrix $N_2$ is contained in the same connected components of $\Sp(2n_2)\setminus K_{n_2}$ as $-I_{2n_2}$, and therefore
\begin{align*}
\parity(N_2)=\parity(-I_{2n_2}).
\end{align*}
It is well known that the symplectic path $\Psi_1:[0,1]\to\Sp(2n_1)$, $\Psi_1(t)=e^{2\pi Jt}$ has Conley--Zehnder index $\cz(\Psi_1)=n_1$, and analogously $\Psi_2:[0,1]\to\Sp(2n_2)$, $\Psi_2(t)=e^{\pi Jt}$ has Conley--Zehnder index $\cz(\Psi_2)=n_2$. Therefore, 
\begin{align*}
\parity(N_i)=\cz(\Psi_i)\bmod 2 = n_i \bmod 2,\qquad i=1,2,
\end{align*}
and analogously $\parity(M_2)=1$. By~\eqref{e:Morse_CZ}, we conclude that
\begin{align*}
\ind(\dot\zeta)\bmod 2
& =(\cz(\Gamma_\alpha)-n)\bmod 2\\
& =(\cz(\Gamma_2)-n) \bmod 2 \\
& =(\parity(M_2) + \parity(N_1) + \parity(N_2) -n)\bmod 2\\
& =(1+n_1+n_2-n) \bmod 2 \\
& =0.
\qedhere 
\end{align*}
\end{proof}

We can finally prove the remaining implication in Theorem~\ref{t:Besse_convex}. We recall that a convex contact sphere $\Sigma$ is called $(\tau,\mu)$-Besse when every Reeb orbit is $\tau$-periodic and has Conley-Zehnder index $\mu$ at period $\tau$. For notational convenience, we set $c_{-1}(\Sigma):=0$.

\begin{lem}\label{l:Besse_convex_2}
If $\Sigma\subset\R^{2n}$ is a $(\tau,\mu)$-Besse convex contact sphere, then $i:=(\mu-n)/2$ is a non-negative integer and
\[c_{i-1}(\Sigma)<\tau= c_i(\Sigma)=c_{i+n-1}(\Sigma)<c_{i+n}(\Sigma).\]
\end{lem}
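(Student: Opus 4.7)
The plan is to first check the integrality of $i$, then to perform a local Morse--Bott analysis at the critical level $\tau$ in order to compute $\fr(\{\A\leq\tau\pm\epsilon\})$, and finally to translate these two numbers into the four (in)equalities by invoking the definition of the spectral invariants and the discreteness of $\sigma(\Sigma)$.

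First, $i=(\mu-n)/2$ is a non-negative integer: by Lemma~\ref{l:Besse_indices} together with formula~\eqref{e:Morse_CZ}, the Morse index $\ind(\dot\zeta_z)=\mu-n$ is even for every $z\in\Sigma$, and convexity gives $\mu\geq n$ (recalled in Subsection~\ref{ss:convex}). Under the $(\tau,\mu)$-Besse hypothesis, $\fix(\phi_R^\tau)=\Sigma$, so, exactly as in the last paragraph of the proof of Lemma~\ref{l:neighborhoods}, the critical set $\KK:=\crit(\A)\cap\A^{-1}(\tau)$ is $S^1$-equivariantly homeomorphic to $\Sigma$ via $z\mapsto\dot\zeta_z$, where $\Sigma$ carries the reparametrized Reeb $S^1$-action of period $\tau$. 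This yields $\fr(\KK)=n-1$ and, rationally, $H^*_{S^1}(\KK;\Q)\cong\Q[e_\KK]/(e_\KK^n)$; by Lemma~\ref{l:Besse_indices}, $\KK$ is a non-degenerate Morse--Bott critical submanifold of Morse index $2i$ and nullity $\dim\KK=2n-1$. Since $\Sigma$ is Besse, $\sigma(\Sigma)$ is discrete, so one may fix $\epsilon>0$ such that $\tau$ is the only critical value of $\A$ in $[\tau-\epsilon,\tau+\epsilon]$.

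The central step is to show $\fr(\{\A\leq\tau-\epsilon\})=i-1$ and $\fr(\{\A\leq\tau+\epsilon\})=i+n-1$. The upper bound
\[\fr(\{\A\leq\tau+\epsilon\})\leq\fr(\{\A\leq\tau-\epsilon\})+n\]
follows from subadditivity~\eqref{e:subadditivity_FR} applied to the $S^1$-invariant open cover of $\{\A\leq\tau+\epsilon\}$ by a small neighborhood $\UU$ of $\KK$ (with $\fr(\UU)=n-1$, since $\UU$ deformation retracts onto $\KK$) and $\{\A\leq\tau+\epsilon\}\setminus\KK$ (which, via the negative gradient flow of $\A$, deformation retracts onto $\{\A\leq\tau-\epsilon\}$). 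For the matching lower bound, I would invoke the Morse--Bott Thom isomorphism
\[H^j_{S^1}(\{\A\leq\tau+\epsilon\},\{\A\leq\tau-\epsilon\};\Q)\cong H^{j-2i}_{S^1}(\KK;\Q)\]
(applicable because the negative normal bundle $N^-$ of $\KK$ has even rank $2i$ and is therefore $S^1$-equivariantly orientable, by Lemma~\ref{l:Besse_indices}) together with the long exact sequence of the pair. Using the module structure over $\Q[e]$ and naturality of cup products, one shows that for each $k\in\{0,\dotsc,n-1\}$, the class $e^{i+k}|_{\{\A\leq\tau+\epsilon\}}$ coincides, modulo classes pulled back from $H^*_{S^1}(\{\A\leq\tau-\epsilon\};\Q)$, with the image under the connecting map of $e_\KK^k\neq 0$ through the Thom isomorphism. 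Hence $e^{i+k}|_{\{\A\leq\tau+\epsilon\}}\neq 0$, giving $\fr(\{\A\leq\tau+\epsilon\})\geq i+n-1$; and by exactness $e^{i+k}|_{\{\A\leq\tau-\epsilon\}}=0$ for the same range of $k$, giving $\fr(\{\A\leq\tau-\epsilon\})\leq i-1$. Combined with the upper bound above, these force both equalities.

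By the definition of the Fadell--Rabinowitz spectral invariants, the two equalities immediately yield $c_{i-1}(\Sigma)\leq\tau-\epsilon$ and $c_{i+n}(\Sigma)\geq\tau+\epsilon$, while $c_i(\Sigma),\dotsc,c_{i+n-1}(\Sigma)\in[\tau-\epsilon,\tau+\epsilon]\cap\sigma(\Sigma)=\{\tau\}$. Hence
\[c_{i-1}(\Sigma)<\tau=c_i(\Sigma)=\dotsb=c_{i+n-1}(\Sigma)<c_{i+n}(\Sigma),\]
as claimed. I expect the main technical obstacle to lie in the Thom-pushforward identification underlying the lower bound of the previous paragraph, namely that the forgetful image of the Morse--Bott Thom class $U\in H^{2i}_{S^1}(\{\A\leq\tau+\epsilon\},\{\A\leq\tau-\epsilon\};\Q)$ in $H^{2i}_{S^1}(\{\A\leq\tau+\epsilon\};\Q)$ coincides with $e^i|_{\{\A\leq\tau+\epsilon\}}$ modulo pull-backs from $\{\A\leq\tau-\epsilon\}$. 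This ultimately amounts to identifying the $S^1$-equivariant Euler class of $N^-$ over $\KK$ with a non-zero scalar multiple of $e_\KK^i$, and the evenness of the Morse index guaranteed by Lemma~\ref{l:Besse_indices} is essential both for orientability of $N^-$ and to ensure that the Thom class sits in an even degree.
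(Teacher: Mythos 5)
Your high-level strategy — a Morse--Bott/Thom analysis at the critical level $\tau$ — is genuinely different from the paper's, which instead combines the parity statement of Lemma~\ref{l:Besse_indices} with a \emph{global} lacunarity argument running over all critical levels. The difficulty you flag at the end is not a removable technicality: it is a real gap, and it is precisely where your local approach breaks down.

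Concretely, you want the image of the Thom class in $H^{2i}_{S^1}(\{\A\leq\tau+\epsilon\};\Q)$ to agree with $e^i$ up to terms that die on $\{\A\leq\tau-\epsilon\}$, which you correctly observe amounts to identifying the equivariant Euler class $e_{S^1}(N^-)\in H^{2i}_{S^1}(\KK;\Q)$ with a \emph{nonzero} multiple of $e_\KK^i$. But $H^*_{S^1}(\KK;\Q)\cong H^*(\KK/S^1;\Q)$ with $\KK/S^1$ a $(2n-2)$-dimensional orbifold, so $H^{2i}_{S^1}(\KK;\Q)=0$ whenever $i\geq n$. This case is not marginal: for iterated Besse orbits the Conley--Zehnder index grows linearly and $i=(\mu-n)/2\geq n$ already at the second iterate of a Zoll flow. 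For such $i$ your ``nonzero multiple of $e_\KK^i$'' cannot exist, and the lower bound $\fr(\{\A\leq\tau+\epsilon\})\geq i+n-1$, together with everything downstream, collapses. Even when $i<n$, the statement you want (that $e^i|_{\{\A\leq\tau+\epsilon\}}$ lies in the image of $H^{2i}_{S^1}(U,V)\to H^{2i}_{S^1}(U)$) is, by exactness, logically \emph{equivalent} to $e^i|_{\{\A\leq\tau-\epsilon\}}=0$; as stated, the argument is circular, because the Thom pair $(U,V)$ near $\KK$ is determined by the bundle $N^-\to\KK$ alone and carries no information about the topology of the sublevel set $V$.

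The paper avoids all of this by importing two pieces of information that your proposal does not use. First, the parity statement of Lemma~\ref{l:Besse_indices} together with the index-monotonicity formula~\eqref{e:Morse_index_formula} yields the \emph{strict} gaps $\ind(\KK')+\nul(\KK')<\ind(\KK)=2i$ for every critical manifold $\KK'$ below level $\tau$ and $\ind(\KK'')>2i+2n-1$ for every $\KK''$ above; feeding this into the Morse--Bott long exact sequences over \emph{all} critical levels gives the vanishing $H^{d}_{S^1}(\{\A<\tau\};\Q)=0$ for $d\geq 2i-1$ (hence $\fr(\{\A<\tau\})<i$) and the injectivity of the restriction $H^{2(i+n-1)}_{S^1}(\{\A<\infty\};\Q)\to H^{2(i+n-1)}_{S^1}(\{\A<\tau+\epsilon\};\Q)$. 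Second, the global non-triviality $\fr(\{\A<\infty\})=\infty$, i.e.\ the nonvanishing of $e^{i+n-1}$ in the full space, is what then forces $e^{i+n-1}|_{\{\A<\tau+\epsilon\}}\neq0$. In other words, the nonvanishing you need at level $\tau$ is propagated down from infinity via lacunarity, rather than produced locally by a Thom class computation. If you want to salvage your approach, the missing ingredient is exactly this degree-counting over all critical levels, which is what replaces your unavailable Euler-class identification; without it, the two equalities $\fr(\{\A\leq\tau\mp\epsilon\})=i-1,\ i+n-1$ cannot be obtained from data at level $\tau$ alone.
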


\begin{proof}
The statement follows from Lemma~\ref{l:Besse_indices} and the lacunarity principle in Morse theory. Indeed, consider the critical manifold $\KK:=\crit(\A)\cap\A^{-1}(\tau)$. Formula~\eqref{e:Morse_index_formula} for the Morse index implies that for all connected components $\KK'\subset\crit(\A)$ with $\A(\KK')<\tau$ (if they exist at all) we have 
$\ind(\KK')+\nul(\KK')\leq\ind(\KK)$.
By Lemma~\ref{l:Besse_indices}, $\ind(\KK')+\nul(\KK')$ is odd, whereas $\ind(\KK)$ is even. Therefore
\begin{align*}
\ind(\KK')+\nul(\KK')<\ind(\KK).
\end{align*}
We set $\tau':=\A(\KK')$ and $\epsilon>0$ small enough so that $(\tau',\tau'+\epsilon]$ does not contain critical values of $\A$. If the negative bundle of $\KK'$ is not orientable, we have
\begin{align*}
H^*_{S^1}(\{\A<\tau'+\epsilon\},\{\A<\tau'\};\Q)=0.
\end{align*}
If, instead, the negative bundle of $\KK'$ is orientable, we have
\begin{align*}
H^*_{S^1}(\{\A<\tau'+\epsilon\},\{\A<\tau'\};\Q)
\cong
H^{*-\ind(\KK')}_{S^1}(\KK';\Q)
\cong
H^{*-\ind(\KK')}(\KK'/S^1;\Q);
\end{align*}
Since $\KK'$ is a closed manifold, its quotient $\KK'/S^1$ is a closed orbifold, and in particular $H^{d}(\KK'/S^1;\Q)$ vanishes for all degrees $d\geq\dim(\KK')=\nul(\KK')$. Therefore, in both cases, we conclude that 
\begin{align*}
H^{d}_{S^1}(\{\A<\tau'+\epsilon\},\{\A<\tau'\};\Q)=0,\qquad \forall d\geq \ind(\KK)-1,
\end{align*}
and thus 
\begin{align}
\label{e:homology_less_tau}
H^{d}_{S^1}(\{\A<\tau\};\Q)=0,\qquad \forall d\geq \ind(\KK)-1.
\end{align}
If we set $i:=(\mu-n)/2=\ind(\KK)/2$, which is a non-negative integer according to Lemma~\ref{l:Besse_indices}, this readily implies that $\fr(\{\A<\tau\})<i$, and therefore
\begin{align}
\label{e:c_i}
c_i(\Sigma)\geq\tau>c_{i-1}(\Sigma).
\end{align}

Analogously, for all connected components $\KK''\subset\crit(\A)$ with $\tau'':=\A(\KK'')>\tau$ we have $\ind(\KK'')>\ind(\KK)+\nul(\KK)=\ind(\KK)+2n-1$ and, if $\epsilon>0$ is small enough so that $(\tau'',\tau''+\epsilon]$ does not contain critical values of $\A$,
\begin{align*}
H^{d}_{S^1}(\{\A<\tau''+\epsilon\},\{\A<\tau''\};\Q)=0,\qquad \forall d\leq \ind(\KK)+2n-1.
\end{align*}
Therefore, for all sufficiently small $\epsilon>0$,
\begin{align*}
H^{d}_{S^1}(\{\A<\infty\},\{\A<\tau+\epsilon\};\Q)=0,\qquad \forall d\leq \ind(\KK)+2n-1,
\end{align*}
which implies that the homomorphism induced by the inclusion
\begin{align*}
H^{\ind(\KK)+2n-2}_{S^1}(\{\A<\infty\};\Q)\to H^{\ind(\KK)+2n-2}_{S^1}(\{\A<\tau+\epsilon\};\Q)
\end{align*}
is injective. This implies that $\fr(\{\A<\tau+\epsilon\})\geq i+n-1$ and thus
\begin{align}\label{e:c_i+n-1}
c_{i+n-1}(\Sigma)\leq\tau. 
\end{align}
Since $c_i(\Sigma)\leq c_{i+n-1}(\Sigma)$, Equations~\eqref{e:c_i} and~\eqref{e:c_i+n-1} imply that 
\[
c_{i-1}(\Sigma)< c_i(\Sigma)=c_{i+n-1}(\Sigma)=\tau.\]
Finally, since $\KK$ is simply connected (being homeomorphic to $\Sigma$), its negative bundle is orientable, and we have
\begin{align*}
H^*_{S^1}(\{\A<\tau+\epsilon\},\{\A<\tau\};\Q)
\cong
H^{*-2i}_{S^1}(\KK;\Q)
\cong
H^{*-2i}(\KK/S^1;\Q);
\end{align*}
since $H^{d}(\KK/S^1;\Q)$ vanishes for all degrees $d\geq\dim(\KK)=\nul(\KK)=2n-1$, we have
\begin{align*}
H^{2(i+n)}_{S^1}(\{\A<\tau+\epsilon\},\{\A<\tau\};\Q)=0.
\end{align*}
This, together with~\eqref{e:homology_less_tau}, implies that $c_{i+n}(\Sigma)>\tau$.
\end{proof}

\subsection{The spectral invariants of ellipsoids}
\label{ss:ellipsoids}
In order to justify the claims made in Example~\ref{ex:ellipsoids}, we compute the spectral invariants of general ellipsoids
\begin{align*}
 E(a)=\left\{ z=(z_1,...,z_n)\in\R^{2n}\ \left|\ \sum_{h=1}^n \frac{|z_h|^2}{a_h}=\frac1\pi \right.\right\},
\end{align*}
where $a=(a_1,...,a_n)$ and $0<a_1\leq...\leq a_n<\infty$. We recall that the associated Reeb flow is given by
\begin{align*}
\phi_R^t:E(a)\to E(a),
\qquad
\phi_R^t(z)=(e^{J2\pi t/a_1}z_1,...,e^{J2\pi t/a_n}z_n ).
\end{align*}
Let $\tau_1<\tau_2<\tau_3<...$ be the elements of the action spectrum $\sigma(E(a))$ enumerated in increasing order. Notice that each $\tau_j$ is a positive multiple of some parameter~$a_h$.

We fix $\alpha\in(1,2)$, and consider the spectral invariants $c_i(E(a))$ defined by means of the Clarke's action functional $\Psi$ and its renormalized version $\A$ associated to the positively $\alpha$-homogeneous Hamiltonian $H:\R^{2n}\to[0,\infty)$ such that $H^{-1}(1)=E(a)$. The following proposition is certainly well-known to the experts, and its analogue for the Ekeland-Hofer capacities was proved in \cite[Section~III]{Ekeland:1990aa}.

\begin{prop}
For all $i\geq1$, the spectral invariant $c_{i-1}(E(a))$ is the $i$-th element in the sequence
\begin{align}
\label{e:spec_ellipsoid}
\underbrace{\tau_1,...,\tau_1}_{\times m_1},\underbrace{\tau_2,...,\tau_2}_{\times m_2},\underbrace{\tau_3,...,\tau_3}_{\times m_3},...
\end{align}
where $m_j$ is the number of parameters $a_h$ having $\tau_j$ as a positive multiple. 
\end{prop}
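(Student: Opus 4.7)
The plan is to perform a direct Morse-Bott analysis of the Clarke functional $\A$ on $E(a)$, using the fact that the Reeb flow on an ellipsoid is the explicit linear flow from Example~\ref{ex:ellipsoids}. For each $\tau>0$, a point $z\in E(a)$ lies in $K(\tau):=\fix(\phi_R^\tau)$ if and only if $z_h=0$ for every $h$ with $a_h\nmid\tau$; so, writing $m(\tau):=\#\{h\ |\ a_h\mid\tau\}$, the set $K(\tau)$ is the unit sphere $S^{2m(\tau)-1}$ of the coordinate subspace $\C^{m(\tau)}\subset\C^n$. Via the embedding in~\eqref{e:zeta_z}, this produces a critical submanifold $\KK(\tau)\subset\crit(\A)$ on which $\A\equiv\tau$, $S^1$-equivariantly homeomorphic to $S^{2m(\tau)-1}$ with the reparametrized Reeb action. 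For any linear $S^1$-action on $S^{2m-1}$ with positive integer weights, $H^*_{S^1}(S^{2m-1};\Q)\cong H^*(\CP^{m-1};\Q)$, with Poincar\'e polynomial $1+t^2+\dots+t^{2(m-1)}$.

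I would then compute the Morse-Bott data at $\KK(\tau)$. The linearized Reeb return map restricted to $E^\omega$ is the direct sum of the rotations $e^{2\pi J\tau/a_j}$ over the transverse coordinate planes, so its kernel has dimension $2(m(\tau)-1)$, and by~\eqref{e:Morse_CZ} the nullity $2m(\tau)-1$ matches $\dim\KK(\tau)$; hence $\A$ is Morse-Bott along $\KK(\tau)$. Since $\cz(\Gamma_\alpha)$ is independent of $\alpha\in(1,2]$ I may set $\alpha=2$, for which $\Gamma_2(t)=\bigoplus_{j=1}^n e^{2\pi Jt\tau/a_j}$; applying the standard Conley-Zehnder formula for a planar rotation path---namely $2\lfloor\tau/a_j\rfloor+1$ if $a_j\nmid\tau$ and $2\tau/a_j-1$ if $a_j\mid\tau$---summing over $j$ and using~\eqref{e:Morse_CZ} gives
\[
\ind(\KK(\tau))=2\big(M(\tau)-m(\tau)\big),
\qquad
M(\tau):=\sum_{h=1}^n\lfloor\tau/a_h\rfloor.
\]

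Finally, I would translate the Morse data into spectral invariants. Enumerating $\sigma(E(a))$ as $\tau_1<\tau_2<\dots$ and setting $m_j:=m(\tau_j)$, $M_j:=M(\tau_j)$ and $M_0:=0$, a direct lattice-point count gives $M_j=m_1+\dots+m_j$. The negative bundle over $\KK(\tau_j)$ has even rank $2M_{j-1}$ and is $S^1$-equivariantly orientable, so the Bott-Morse isomorphism identifies
\[
H^*_{S^1}\big(\{\A<\tau_j+\epsilon\},\{\A<\tau_j-\epsilon\};\Q\big)
\cong
H^{*-2M_{j-1}}(\CP^{m_j-1};\Q),
\]
which is of rank one in each of the even degrees $2M_{j-1},2M_{j-1}+2,\dots,2M_j-2$. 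As $j$ varies these $m_j$-term windows of degrees tile $\{0,2,4,\dots\}$ without overlap, so the Morse inequalities are equalities; by naturality with the inclusion into the ambient $\{\Psi<0\}$, whose $S^1$-equivariant cohomology is $\Q[e]$, the classes produced at level $\tau_j$ are precisely the powers $e^{M_{j-1}},e^{M_{j-1}+1},\dots,e^{M_j-1}$ of the Fadell-Rabinowitz Euler class. Hence $\fr(\{\A<\tau_j+\epsilon\})=M_j-1$, and the minmax definition of the spectral invariants yields $c_{i-1}(E(a))=\tau_j$ precisely when $M_{j-1}<i\leq M_j$, which is exactly the $i$-th entry of~\eqref{e:spec_ellipsoid}. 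The only delicate ingredient is the orientability and lacunarity bookkeeping in this last step; everything else is a direct computation from the linear form of the Reeb flow on $E(a)$.
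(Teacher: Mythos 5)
Your proof is correct and follows essentially the same route as the paper's: identify $\KK_j=\crit(\A)\cap\A^{-1}(\tau_j)$ as an ellipsoid $S^{2m_j-1}$ with the reparametrized Reeb action, compute $\nul(\KK_j)=2m_j-1$ and $\ind(\KK_j)=2(m_1+\dots+m_{j-1})$ via the Conley--Zehnder indices of the linear flow, and conclude by perfectness/lacunarity that $\fr(\{\A<\tau_j+\epsilon\})=M_j-1$. The only cosmetic differences are that you compute the Morse index with $\lfloor\cdot\rfloor$ instead of the paper's $\lceil\cdot\rceil-1$ (they agree), you establish $M_j=m_1+\dots+m_j$ by a lattice count rather than the paper's index recursion, and you spell out the naturality step showing the surviving classes are the powers of the Euler class, which the paper leaves implicit in ``readily implies.''
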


\begin{proof}
The expression of the Reeb flow $\phi_R^t$ readily implies that, for every $j\geq0$, the stratum of $\tau_j$-periodic orbits 
\[\Sigma_j:=\fix(\phi_R^{\tau_j})\subset E(a)\] 
is  an ellipsoid of dimension
$\dim(\Sigma_j)= 2m_j-1$.
The critical set \[\KK_j:=\crit(\A)\cap\A^{-1}(\tau_j)\] is $S^1$-equivariantly homeomorphic to the ellipsoid $\Sigma_j$, where the $S^1$ action on $\Sigma_j$ is given by the renormalized Reeb flow $t\mapsto\phi_R^{\tau_j t}$. In particular, 
\begin{align*}
H^d_{S^1}(\KK_j;\Q)  \cong H^d_{S^1}(\Sigma_k;\Q) \cong
\left\{
  \begin{array}{@{}ll}
    \Q, & \mbox{if $d\in\{0,2,...,2m_j-2\}$,}\\ 
    0, & \mbox{otherwise}. 
  \end{array}
\right.
\end{align*}
The Morse indices of each $\KK_j$ are given by
\begin{align*}
\ind(\KK_j) & = 2\sum_{h=1}^n \left( \left\lceil\frac{\tau_j}{a_h}\right\rceil -1\right),\\
\nul(\KK_j) & = \dim(\KK_j) = 2m_j-1.
\end{align*}
These values can be obtained by means of the relations \eqref{e:Morse_CZ} and the well-known computation of the Conley-Zehnder indices of the periodic Reeb orbits of $\phi_R^t$ (see, e.g., \cite[Section~I.7]{Ekeland:1990lc}. 
Notice in particular that
\begin{align*}
\ind(\KK_{j+1}) 
&=
\ind(\KK_j) + 2\sum_{h=1}^n \left( 
\left\lceil\frac{\tau_{j
+1}}{a_h}\right\rceil - \left\lceil\frac{\tau_{j
}}{a_h}\right\rceil
\right)\\
&= 
\ind(\KK_j) + 2 m_j = \ind(\KK_j)+\nul(\KK_j)+1, 
\end{align*}
and therefore 
\begin{align*}
\ind(\KK_j)=2\sum_{0<h<j}m_h.
\end{align*}
This readily implies that the functional $\A$ is perfect for the $S^1$-equivariant cohomology, that is, for all $\tau>0$ we have
\begin{equation}
\label{e:sublevel_cohomology}
\begin{split}
H_{S^1}^{*}(\{\A<\tau\};\Q)
& \cong
\bigoplus_{\tau_j< \tau} H_{S^1}^{*-\ind(\KK_j)}(\KK_j;\Q)\\
& \cong
\bigoplus_{\tau_j< \tau} H_{S^1}^{*-2(m_1+...+m_{j-1})}(\KK_j;\Q). 
\end{split}
\end{equation}
Notice that the summands in the right-hand side are supported in complementary even degrees. This  readily implies that $c_i(E(a))=\tau_j$ where $j$ is such that 
\[
2(m_1+...+m_{j-1})\leq 2i\leq 2(m_1+...+m_{j}).
\qedhere
\]
\end{proof}

\section{Restricted contact type hypersurfaces of symplectic vector spaces}

\label{s:EH_capacities}

\subsection{Equivariant spectral invariants}

Let us recall the construction of equivariant spectral values of asymptotically quadratically-convex Hamiltonians, which is due to Ekeland and Hofer, \cite{Ekeland:1990aa}. We denote by $\HH_m$ the space of smooth Hamiltonians $H:\R^{2n}\to[0,\infty)$ such that $\R^{2n}\setminus\supp(H)\neq\varnothing$ and $H(z)=(m+\tfrac12)\pi\|z\|^2$ for some integer $m\geq1$ outside a compact set. 

Recall that the Hilbert space $H^{1/2}(S^1,\R^{2n})=W^{1/2,2}(S^1,\R^{2n})$, where $S^1=\R/\Z$, splits as a direct sum 
\[H^{1/2}(S^1,\R^{2n})=\bigoplus_{k\in\Z} E_k,\] where each $E_k$ is the $2n$-dimensional vector space containing those $\gamma\in C^\infty(S^1,\R^{2n})$ of the form $\gamma(t)=\exp^{J2\pi k t}\gamma(0)$. We consider the orthogonal projections
\begin{align*}
P^\pm:H^{1/2}(S^1,\R^{2n})\to E^\pm:=\bigoplus_{\pm k>0}E_k.
\end{align*}
For each $\gamma\in H^{1/2}(S^1,\R^{2n})$, we will write 
$\gamma^\pm=P^\pm\gamma$ and $\gamma^0=\gamma-\gamma^+-\gamma^-$. 
The 1-periodic orbits of the Hamiltonian flow of any $H\in\HH_m$ are precisely the critical points of the action functional $\Phi_H: H^{1/2}(S^1,\R^{2n})\to\R$ given by
\begin{align*}
\Phi_H(\gamma)
=
\underbrace{\tfrac12\Big( \|\gamma^+\|_{H^{1/2}}^2 - \|\gamma^-\|_{H^{1/2}}^2 \Big)}_{=:A(\gamma)}
-
\underbrace{\int_{S^1} H(\gamma(t))\,\diff t}_{=:B_H(\gamma)}.
\end{align*}
The quadratic form $A:H^{1/2}(S^1,\R^{2n})\to\R$ is sometimes called the symplectic action, since on the subspace of smooth $\gamma\in C^\infty(S^1,\R^{2n})$ it is given by
\begin{align*}
A(\gamma) = \int_\gamma \Lambda,
\end{align*}
where $\Lambda$ is any primitive of the standard symplectic form $\omega$ of $\R^{2n}$.
The functional $\Phi_H$ is smooth (see \cite[Appendix~4]{Hofer:1994bq}) and satisfies the Palais--Smale condition.
Since $H$ is autonomous, $\Phi_H$ is invariant under the $S^1$ action on $H^{1/2}(S^1,\R^{2n})$ given by
\begin{align*}
t\cdot\gamma=\gamma(t+\cdot),\qquad\forall t\in S^1,\ \gamma\in H^{1/2}(S^1,\R^{2n}).
\end{align*}

Ekeland--Hofer's spectral values for $H$ are min-max values of $\Phi_H$ over a suitable family of $S^1$-invariant subspaces. In order to define such a family, they needed to introduce a group $\Gamma$ of $S^1$-equivariant homeomorphisms of $H^{1/2}(S^1,\R^{2n})$ which is large enough to contain the time-$t$ maps of the anti-gradient flow of $\Phi_H$; the group $\Gamma$ consists of those homeomorphisms $\psi:H^{1/2}(S^1,\R^{2n})\to H^{1/2}(S^1,\R^{2n})$ of the form
\begin{align*}
\psi(\gamma) = e^{F^+(\gamma)}\gamma^+ + \gamma^0 + e^{F^-(\gamma)}\gamma^- + K(\gamma),
\end{align*}
where $F^\pm:H^{1/2}(S^1,\R^{2n})\to\R$ are $S^1$-invariant continuous functions mapping bounded sets to bounded sets, and $K:H^{1/2}(S^1,\R^{2n})\to H^{1/2}(S^1,\R^{2n})$ is an $S^1$-equivariant, possibly non-linear, compact, continuous map (here, by a compact map we mean a map such that the image of any bounded set is pre-compact); moreover, $F^+(\gamma)=F^-(\gamma)=0$ and $K(\gamma)=0$ if $A(\gamma)<0$ or if $\|\gamma\|_{H^{1/2}}$ is larger than some positive constant depending on $\psi$.

Let $X\subset H^{1/2}(S^1,\R^{2n})$ be an $S^1$-invariant subset. The \textbf{Ekeland--Hofer index}  is defined by
\begin{align*}
\eh(X):=\inf_{\psi\in\Gamma} \fr(\psi(X)\cap S^+),
\end{align*}
where $S^+$ is the unit sphere of the Hilbert subspace $E^+\subset H^{1/2}(S^1,\R^{2n})$, and $\fr(\cdot)$ denotes the Fadell--Rabinowitz index (see Section~\ref{s:Fadell_Rabinowitz}). Indices of this kind, in an abstract setting, were first investigated by Benci \cite{Benci:1982aa}.
The $i$-th \textbf{Ekeland--Hofer spectral invariant} is the min-max of $\Phi_H$ over the family of $S^1$-invariant subsets $X\subseteq H^{1/2}(S^1,\R^{2n})$  with $\eh(X)\geq i$, i.e.
\begin{align*}
c_i(H) := \inf \left.\left\{ \sup_X \Phi_H\ \right|\ \eh(X)\geq i \right\}.
\end{align*}
Due to the special form of the Hamiltonians in $\HH_m$, it turns out that
\begin{align*}
0<c_0(H)\leq c_1(H)\leq ...\leq c_{n(m-1)-1}(H) \leq c_{n(m-1)}(H)<\infty,
\qquad\forall H\in\HH_m,
\end{align*}
and every such spectral invariant is a critical value of $\Phi_H$.

The following statement is a version of the Lusternik-Schnirelmann theorem for the Ekeland--Hofer spectral invariants.

\begin{lem}
\label{l:LS_EH}
For each $H\in\HH_m$ and integers $0\leq i_1\leq i_2\leq n(m-1)$ such that $c_{i_1}(H)=c_{i_2}(H)=:c$, every $S^1$-invariant  neighborhood $\UU\subseteq H^{1/2}(S^1,\R^{2n})$ of the critical set $\crit(\Phi_H)\cap\Phi_H^{-1}(c)$ has Fadell--Rabinowitz index $\fr(\UU)\geq i_2-i_1$.
\end{lem}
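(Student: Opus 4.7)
The plan is to run a standard Lusternik--Schnirelmann-type contradiction argument in the Ekeland--Hofer framework. Assume by contradiction that there exists an $S^1$-invariant open neighborhood $\UU$ of $K := \crit(\Phi_H) \cap \Phi_H^{-1}(c)$ with $\fr(\UU) \leq i_2 - i_1 - 1$; I want to contradict $c_{i_1}(H) = c_{i_2}(H) = c$.

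First, after choosing a slightly smaller $S^1$-invariant neighborhood $\UU' \subseteq \UU$ of $K$ with $\overline{\UU'} \subseteq \UU$, I would invoke the Ekeland--Hofer deformation lemma from \cite{Ekeland:1990aa}: since $\Phi_H$ satisfies the Palais--Smale condition, $K$ is compact, and the time-$t$ map of the negative pseudo-gradient flow of $\Phi_H$ belongs to $\Gamma$, there exist $\epsilon > 0$ and $\psi \in \Gamma$ such that $\psi\bigl(\{\Phi_H \leq c + \epsilon\} \setminus \UU'\bigr) \subseteq \{\Phi_H \leq c - \epsilon\}$. By the min-max definition of $c_{i_2}(H) = c$, I can then choose an $S^1$-invariant subset $X \subseteq \{\Phi_H \leq c + \epsilon\}$ with $\eh(X) \geq i_2$. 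Decomposing $X \subseteq (X \setminus \UU') \cup \UU'$ and applying $\psi$ yields
\begin{align*}
\psi(X) \subseteq \{\Phi_H \leq c - \epsilon\} \cup \psi(\UU').
\end{align*}

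Next I would combine three standard properties of the Ekeland--Hofer index: (i) $\Gamma$-invariance, $\eh(\psi(Y)) = \eh(Y)$, which holds because $\Gamma$ is a group so the substitution $\phi \mapsto \phi \circ \psi$ is a bijection of $\Gamma$; (ii) the elementary bound $\eh(Y) \leq \fr(Y)$, obtained by taking $\phi = \id \in \Gamma$ in the defining infimum together with the monotonicity of $\fr$ with respect to the inclusion $Y \cap S^+ \subseteq Y$; and (iii) subadditivity $\eh(A \cup B) \leq \eh(A) + \eh(B) + 1$, which is the $\eh$-analogue of the corresponding property \eqref{e:subadditivity_FR} for $\fr$. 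Combined with the min-max definition of $c_{i_1}(H)$ (which forces $\eh(\{\Phi_H \leq c - \epsilon\}) \leq i_1 - 1$), these give
\begin{align*}
i_2 \leq \eh(X) = \eh(\psi(X))
& \leq \eh(\{\Phi_H \leq c - \epsilon\}) + \eh(\psi(\UU')) + 1 \\
& \leq (i_1 - 1) + \fr(\UU') + 1 \\
& \leq (i_1 - 1) + (i_2 - i_1 - 1) + 1 = i_2 - 1,
\end{align*}
which is the desired contradiction.

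The main obstacle is the subadditivity property of $\eh$: because $\eh$ is defined as an infimum over $\Gamma$, one cannot simply invoke the subadditivity of $\fr$ applied to a single deformation, since the near-optimal $\phi$ for $A$ need not be near-optimal for $B$. This is nevertheless a standard property of the Benci-type cohomological pseudo-indices underlying Ekeland--Hofer theory \cite{Benci:1982aa, Ekeland:1990aa}, and can be extracted by combining the subadditivity of $\fr$ with a simultaneous approximation of the infima via the continuity of $\eh$ on small neighborhoods of compact $S^1$-invariant sets. Once these background facts about $\eh$ are in hand, the argument above is a direct application of the Lusternik--Schnirelmann principle in the same spirit as Lemma~\ref{l:Besse_convex_1}.
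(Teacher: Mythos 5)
Your overall strategy is the same as the paper's: deform the sublevel set, use $\Gamma$-invariance of $\eh$, and appeal to a subadditivity-type inequality to conclude. However, the key step you flag as "the main obstacle" — the inequality $\eh(A \cup B) \leq \eh(A) + \eh(B) + 1$ — is exactly where the gap is, and your proposed fix does not close it. Unpacking the definition, $\eh(A\cup B)=\inf_{\psi\in\Gamma}\fr(\psi(A\cup B)\cap S^+) \leq \inf_{\psi}\bigl(\fr(\psi(A)\cap S^+)+\fr(\psi(B)\cap S^+)+1\bigr)$, and an infimum of a sum is not a sum of infima; a single $\psi$ that is near-optimal for $A$ may be very far from optimal for $B$. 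There is no "continuity of $\eh$ on small neighborhoods" that repairs this, and in fact this naive two-$\eh$ subadditivity is \emph{not} what Benci-type pseudo-index theories provide: the standard axiom in \cite{Benci:1982aa} replaces one of the two terms by the underlying index $\fr$, precisely because $\fr(\psi(B))=\fr(B)$ is $\Gamma$-invariant and hence drops out of the infimum.

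The paper exploits exactly that observation. It proves (their Equation~\eqref{e:subadditivity_EH}) that for $S^1$-invariant open sets with $\overline{\mathcal{U}_1}\subset\mathcal{U}_2$,
\begin{equation*}
\eh(\mathcal{Y}\cup\mathcal{U}_1)\leq \eh(\mathcal{Y})+\fr(\mathcal{U}_2)+1,
\end{equation*}
and the derivation hinges on the fact that $\fr(\psi(\mathcal{U}_2))=\fr(\mathcal{U}_2)$ for all $\psi\in\Gamma$, so the infimum only acts on the $\mathcal{Y}$-term. Note that your chain actually already replaces $\eh(\psi(\UU'))$ by $\fr(\UU')$ in the last step, so you never needed the two-$\eh$ bound in the first place; if you substitute the $\fr$-version of subadditivity directly (taking $\mathcal{U}_1=\UU'$ and $\mathcal{U}_2=\UU$ with $\overline{\UU'}\subset\UU$), your contradiction argument goes through verbatim and is in fact the paper's argument rewritten in the contrapositive. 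As it stands, though, the proposal asserts an unproved and likely false intermediate inequality and attributes it to the literature where it does not appear in that form.
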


\begin{proof}
Let $X$ be a topological space equipped with an $S^1$ action, and $A,B,C\subset X$ three $S^1$-invariant open subsets such that $\overline B\subset C$. The subadditivity property~\eqref{e:subadditivity_FR} of the Fadell--Rabinowitz index implies
\begin{align*}
\fr(A)
\leq
\fr((A\setminus\overline B)\cup C)
\leq
\fr(A\setminus\overline B)+\fr(C)+1.
\end{align*}
Following Benci \cite{Benci:1982aa}, this can be applied to the Ekeland--Hofer index as follows: if $\mathcal{U}_1,\mathcal{U}_2,\mathcal{Y}\subset H^{1/2}(S^1,\R^{2n})$ are three $S^1$-invariant open subsets such that $\overline{\mathcal{U}_1}\subset \mathcal{U}_2$, $\fr(\mathcal{U}_2)<\infty$, and $\eh(\mathcal{Y})<\infty$, then
\begin{equation}
\label{e:subadditivity_EH}
\begin{split}
\eh(\mathcal{Y}) 
&\geq
\eh(\mathcal{Y}\setminus \overline{\mathcal{U}_1}) \\
&=
\inf_{\psi\in\Gamma}
\fr\big((\psi(\mathcal{Y}\cup \mathcal{U}_1)\cap S^+)\setminus\psi(\overline{\mathcal{U}_1})\big)\\
&\geq
\inf_{\psi\in\Gamma}
\Big(
\fr\big(\psi(\mathcal{Y}\cup \mathcal{U}_1)\cap S^+\big) - \fr(\psi(\mathcal{U}_2)) -1
\Big)\\
&=
\inf_{\psi\in\Gamma}
\Big(
\fr\big(\psi(\mathcal{Y}\cup \mathcal{U}_1)\cap S^+) - \fr(\mathcal{U}_2) -1
\Big)\\
&=
\eh(\mathcal{Y}\cup \mathcal{U}_1)-\fr(\mathcal{U}_2)-1.
\end{split}
\end{equation}

Let us now consider the setting of the statement, with $c_{i_1}(H)=c_{i_2}(H)=:c$. We can assume that some $S^1$-invariant open neighborhood $\mathcal{V}\subset H^{1/2}(S^1,\R^{2n})$ of the critical set $\KK:=\crit(\Phi_H)\cap\Phi_H^{-1}(c)$ has finite Fadell--Rabinowitz index $\fr(\mathcal{V})<\infty$ (for otherwise the lemma already follows).
We fix an arbitrary $S^1$-invariant open neighborhood $\mathcal{U}_2\subseteq \mathcal{V}$ of $\KK$, and a smaller $S^1$-invariant open neighborhood $\mathcal{U}_1\subset\overline{\mathcal{U}_1}\subset \mathcal{U}_2$ of $\KK$. Since $\Phi_H$ satisfies the Palais--Smale condition, there exists $\epsilon>0$ small enough so that the anti-gradient flow of $\Phi_H$ deforms the sublevel set $\{\Phi_H<c+\epsilon\}$ into $\{\Phi_H<c\}\cup \mathcal{U}_1$. This, together with the equality $c=c_{i_2}(H)$, implies 
\begin{align*}
\eh(\{\Phi_H<c\}\cup \mathcal{U}_1)
\geq
\eh(\{\Phi_H<c+\epsilon\})
\geq
i_2.
\end{align*}
Moreover, since $c=c_{i_1}(H)$, we have
\begin{align*}
\eh(\{\Phi_H<c\})
\leq
i_1-1.
\end{align*}
We can now apply Equation~\eqref{e:subadditivity_EH} with $\mathcal{Y}=\{\Phi_H<c\}$, and obtain
\begin{align*}
&\fr(\mathcal{U}_2)
 \geq
 \eh(\{\Phi_H<c\}\cup \mathcal{U}_1) - \eh(\{\Phi_H<c\}) -1 \geq i_2-i_1.
 \qedhere
\end{align*}
\end{proof}

\subsection{The Ekeland--Hofer capacities}

For any bounded subset $B\subset \R^{2n}$ and integer $m\geq 1$, we consider the family of Hamiltonians
\begin{align*}
 \HH_m(B):=\big\{ H\in\HH_m\ \big|\ \supp(H)\cap \overline B=\varnothing \big\},
\end{align*}
and we set
\begin{align*}
\HH(B):=\bigcup_{m\geq1} \HH_m(B).
\end{align*}
The $i$-th \textbf{Ekeland--Hofer capacity}\footnote{Our index $i$ for the Ekeland--Hofer capacity is shifted by one with respect to the index in the original reference \cite{Ekeland:1990aa}: namely, our  $c_0(\cdot)$ corresponds to $c_1(\cdot)$ in \cite{Ekeland:1990aa}.} of the bounded set $B$ is defined as
\begin{align*}
c_i(B) = \inf_{H\in\HH(B)} c_i(H).
\end{align*}
This is indeed a symplectic capacity, i.e., it satisfies the monotonicity property ($c_i(A)\leq c_i(B)$ if there exists a symplectic embedding $\psi:A\hookrightarrow B$), the conformality property ($c_i(r B)=r^2 c_i(B)$ for all $r\in\R\setminus\{0\}$), and is non-trivial ($c_i(B^{2n}(1))=c_i(B^2(1)\times B^{2n-2}(R))=(i+1)\pi$ for all $R\geq1$). 

We consider a compact subset $B\subset\R^{2n}$ whose boundary $\Sigma=\partial B$ is smooth and of restricted contact type. A remarkable feature of the Ekeland--Hofer capacities is that they are action selectors, i.e.
\begin{align}\label{e:EH_representation}
c_i(B)=c_i(\Sigma)\in \sigma(\Sigma);
\end{align}
see \cite[Prop.~2]{Ekeland:1990aa}.

\subsection{A sufficient condition for the Besse property}
Let $\Sigma\subset\R^{2n}$ be a compact hypersurface of restricted contact type, and $B$ the closure of the bounded connected component of $\R^{2n}\setminus\Sigma$. In the proof of Theorem~\ref{t:EH_Besse}, we utilize the 
sequence of Hamiltonians $H_m\in\HH_m(B)$ from \cite{Ekeland:1989aa} used in the proof of~\eqref{e:EH_representation}. Since we need to make minor modifications in the construction, we include its full details for the reader's convenience.

We fix a primitive $\Lambda$ of the standard symplectic form $\omega$ of $\R^{2n}$ which restricts to a contact form $\lambda=\Lambda|_\Sigma$, and we denote by $R$ the Reeb vector field of $(\Sigma,\lambda)$, and by $\phi_R^t$ the associated Reeb flow. We now consider the associated Liouville vector field $V$ on $\R^{2n}$, which is uniquely defined by $\Lambda=\omega(V,\cdot)$, and denote by $\phi_V^s$ its flow. Since $\Lie_V\Lambda=\Lambda$, we have $(\phi_V^s)^*\Lambda=e^s\Lambda$. In order to simplify the notation, let us assume without loss of generality that $\phi_V^s|_\Sigma$ is defined for all $s\in[0,2]$. The hypersurface $\Sigma_s:=\phi_V^s(\Sigma)$ is again of restricted contact type, and its Reeb vector field is \[R_s(\phi_V^s(z))=e^{-s}\diff\phi_V^s(z)R(z).\] In particular, 
\[\sigma(\Sigma_s)=e^s\sigma(\Sigma),
\qquad\forall s\in[0,2].\]

Let $r>0$ be large enough so that 
\begin{align*}
\Sigma_2=\phi_V^2(\Sigma) \subset B^{2n}(r).
\end{align*}
We fix $b>(m+\tfrac12)\pi r^2$ and $k\geq\max\{4,b^{-1}\}$. Since the action spectrum $\sigma(\Sigma)$ is nowhere dense, we can find
\begin{align*}
\tau\in\big[(b-\tfrac1k)(e^{3/k}-e^{2/k})^{-1},(b-\tfrac1{2k})(e^{3/k}-e^{2/k})^{-1}\big]\setminus\sigma(\Sigma).
\end{align*}
We consider a smooth monotone increasing function $\phi=\phi_{b,k}:[0,\infty)\to[0,b]$ such that 
\begin{align*}
&\phi|_{[0,1/k]}\equiv0,\\
&\tfrac{\diff^3}{\diff s^3}\phi(s)>0,\quad \forall s\in\big(\tfrac 1k,\tfrac2k\big],\\
&\phi(\tfrac2k)<\tfrac1{2k},\\
&\phi|_{[4/k,\infty)}\equiv b,\\
&\phi(s)= \tau e^{s} - \tau e^{2/k} + \phi(\tfrac2k),
\quad
\forall s\in\big[\tfrac2k,\tfrac3k\big].
\end{align*}
The condition on the third derivative of $\phi$ guarantees that
\begin{align}
\label{e:monotonicity_phi}
 \tfrac{\diff}{\diff s}\big( \dot\phi(s)-\phi(s) \big)>0,
 \qquad
 \forall s\in \big(\tfrac1k,\tfrac2k\big],
\end{align}
while the bounds on $\tau$ imply that
\begin{align*}
b - \tfrac1k < \phi(s) \leq b,
\qquad
\forall s\in[\tfrac3k,\infty).
\end{align*}
Next, we consider a smooth convex function $g=g_{b,m} \colon [0,\infty)\to[b,\infty)$ such that $g|_{[0,r]}\equiv b$, $g(s)\geq(m+\tfrac12)\pi s^2$ for all $s>r$, and $g(s)=(m+\tfrac12)\pi s^2$ for all $s$ large enough. We set 
\[B_s:= \phi_V^s(B) \subset \R^{2n},\] 
which is the compact subset with boundary $\partial B_s=\Sigma_s$.
We define the Hamiltonian
\begin{align}
\label{e:Hamiltonian}
H=H_{b,k,m}\in\HH_m(B),
\qquad
H(z)
=
\left\{
  \begin{array}{@{}lll}
    0, &  & \mbox{if }z\in B, \\ 
    \phi(s), &  & \mbox{if }z\in \Sigma_s,\ s\in[0,1] \\ 
    g(|z|), &  & \mbox{if }z\not\in B_1. \\ 
  \end{array}
\right.
\end{align}

\begin{lem}
\label{l:periodic_orbits_H_R}
For each positive critical value $c>0$ of the action functional $\Phi_H$ there exists
\begin{align*}
s=s(c)\in (\tfrac1k,\tfrac2k\big]\cup [\tfrac3k,\tfrac4k\big)
\end{align*}
such that every critical point $\gamma\in\crit(\Phi_H)\cap\Phi_H^{-1}(c)$ lies on the energy hypersurface $\Sigma_{s}$. In particular, 
\begin{align}
\label{e:c_phi}
c=\dot\phi(s)-\phi(s),
\end{align}
and the curve
$\zeta(t) 
:=
\phi_V^{-s}(\gamma(te^{s}/\dot\phi(s)))$
is a closed Reeb orbit of $(\Sigma,\lambda)$ with period $A(\zeta)=\dot\phi(s)e^{-s}$.
\end{lem}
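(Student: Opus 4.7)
The plan is to analyze the $1$-periodic orbits $\gamma$ of the Hamiltonian flow of $H$ (which are exactly the critical points of $\Phi_H$) in each of the three zones of the piecewise construction~\eqref{e:Hamiltonian}: the flat interior $B$, the Liouville shell where $H = \phi(s)$, and the outer region where $H = g(|z|)$.

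The main step is the Liouville-shell computation. I would use the parametrization $\Psi(s,x) := \phi_V^s(x)$ of a tubular neighborhood of $\Sigma$ and the identity $(\phi_V^s)^*\Lambda = e^s\Lambda$ to write $\Psi^*\Lambda = e^s\lambda$ and $\Psi^*\omega = e^s\,ds\wedge\lambda + e^s\,d\lambda$ on $\R\times\Sigma$. Contracting the Hamiltonian equation $\iota_{X_H}\omega = -dH = -\dot\phi(s)\,ds$ and using that $\ker(d\lambda|_{T\Sigma}) = \R\cdot R$, one obtains that $X_H$ is tangent to $\Sigma_s$ and equal to $\dot\phi(s)R_s$, where $R_s$ is the Reeb field of $(\Sigma_s,\Lambda|_{\Sigma_s})$ described in the excerpt. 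A $1$-periodic orbit $\gamma$ on $\Sigma_s$ is therefore a reparametrization of a closed Reeb orbit of $R_s$ with period $\dot\phi(s)$, which forces $\dot\phi(s)\in\sigma(\Sigma_s) = e^s\sigma(\Sigma)$. Using $\Lambda(R_s)\equiv 1$ on $\Sigma_s$ and $H|_{\Sigma_s}\equiv\phi(s)$, one then computes
\[\Phi_H(\gamma) = \int_0^1 \Lambda(\dot\gamma)\,dt - \int_0^1 H\circ\gamma\,dt = \dot\phi(s) - \phi(s),\]
which is~\eqref{e:c_phi}. Setting $\zeta(t) := \phi_V^{-s}(\gamma(te^s/\dot\phi(s)))$ and using the flow conjugation $\phi_{R_s}^u\circ\phi_V^s = \phi_V^s\circ\phi_R^{e^{-s}u}$ (the integrated form of the given formula for $R_s$) identifies $\zeta$ with a closed Reeb orbit of $(\Sigma,\lambda)$ of period $e^{-s}\dot\phi(s)$; since $\lambda(R)\equiv 1$, its action equals that period.

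It then remains to locate $s$. In $B_{1/k}$ and in the flat annular regions where $H\equiv b$, one has $X_H = 0$, so only constants occur, with $\Phi_H\in\{0,-b\}$, none positive. In the outer region $\{|z|>r\}$, any $1$-periodic non-constant orbit is a uniform rotation $z(t) = e^{2\pi Jkt}z_0$ with $g'(|z_0|)/|z_0| = 2\pi k$ for some $k\in\N$, and a direct computation using $\Lambda = \tfrac12\sum(x_i\,dy_i - y_i\,dx_i)$ gives $\Phi_H(z) = \pi k|z_0|^2 - g(|z_0|)$. The convexity of $g$ together with $g(s)\geq(m+\tfrac12)\pi s^2$ for $s>r$ and $b>(m+\tfrac12)\pi r^2$ forces $k\leq m$, after which $\Phi_H(z)\leq(k-m-\tfrac12)\pi|z_0|^2 < 0$. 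Positive critical values therefore come only from orbits on $\Sigma_s$ with $s\in(1/k,4/k)$. On $[2/k,3/k]$, the explicit formula $\dot\phi(s) = \tau e^s$ reduces the orbit condition $\dot\phi(s)\in e^s\sigma(\Sigma)$ to $\tau\in\sigma(\Sigma)$, contradicting the choice of $\tau$. Hence $s\in(1/k,2/k]\cup[3/k,4/k)$; the strict monotonicity~\eqref{e:monotonicity_phi} on $(1/k,2/k]$ and an analogous monotonicity of $\dot\phi-\phi$ on $[3/k,4/k)$ (where $\dot\phi$ decreases to $0$ against $\phi$ increasing to $b$) ensure that $s$ is determined by $c$.

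The step I expect to be the main obstacle is the outer-region estimate: rigorously deriving $\Phi_H(z)<0$ for non-constant rotations requires exploiting the convexity of $g$ together with both the asymptotic bound $g(s)\geq(m+\tfrac12)\pi s^2$ and the threshold $b>(m+\tfrac12)\pi r^2$ to rule out winding numbers $k>m$. Once this is secured, the remainder follows routinely from the Liouville-coordinate calculation and the explicit form of $\phi$ on $[2/k,3/k]$.
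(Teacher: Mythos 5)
Your proof is correct and follows essentially the same route as the paper's: rule out critical points in the flat regions ($B_{1/k}$ and $B^{2n}(r)\setminus B_{4/k}$) and in the exterior as having non-positive action, identify $J\nabla H|_{\Sigma_s}=\dot\phi(s)R_s$ on the Liouville shell to get $\Phi_H(\gamma)=\dot\phi(s)-\phi(s)$ and the Reeb reparametrization $\zeta$, exclude $s\in[\tfrac2k,\tfrac3k]$ via $\tau\notin\sigma(\Sigma)$, and invoke~\eqref{e:monotonicity_phi}. The outer-region estimate that you flag as the main obstacle is in fact dispatched by the paper in a single line directly from $\Phi_H(\gamma)=\tfrac12\dot g(|\gamma|)|\gamma|-g(|\gamma|)\leq (m+\tfrac12)\pi|\gamma|^2-(m+\tfrac12)\pi|\gamma|^2=0$, without the explicit winding-number case analysis you set up.
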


\begin{proof}
Let us fix a critical point $\gamma\in\crit(\Phi_H)\cap\Phi_H^{-1}(0,\infty)$.
Since the Hamiltonian $H$ is autonomous, $\gamma$ is contained in a level set of $H$. Clearly, $\sigma$ cannot intersect $B_{1/k}$ nor $B^{2n}(r)\setminus B_{4/k}$, for otherwise it would be a constant curve with $H(\gamma)\geq0$ and associated critical value $\Phi_H(\gamma)\leq0$. Analogously, $\gamma$ cannot intersect the complement of $B^{2n}(r)$, for otherwise $H(\gamma)=g(|\gamma|)$, and we would still have
\begin{align*}
\Phi_H(\gamma)
&=
\tfrac12 \dot g(|\gamma|)|\gamma|-g(|\gamma|)
\leq 
\tfrac12(m+\tfrac12)\pi2|\gamma|\,|\gamma|-(m+\tfrac12)\pi|\gamma|^2=0.
\end{align*}
Therefore, $\gamma$ must be contained in $B_{4/k}\setminus B_{1/k}$, which is foliated by the restricted contact type energy hypersurfaces $\Sigma_s$, $s\in[\tfrac1k,\tfrac4k]$. Notice that on $\Sigma_s$ the Hamiltonian vector field $J\nabla H$ is given by
\begin{align*}
J\nabla H|_{\Sigma_s} = \dot\phi(s)R_s
\end{align*}
In particular, if $\gamma$ lies on $\Sigma_{s_\gamma}$, the curve 
\[\zeta(t)=\phi_V^{-s_\gamma}(\gamma(t e^{s_\gamma}/\dot\phi(s_\gamma))\] 
is a closed Reeb orbit of $(\Sigma,\lambda)$ with period $A(\zeta):=\dot\phi(s_\gamma)e^{-s_\gamma}$. This readily implies that $s_\gamma\not\in[\tfrac2k,\tfrac3k]$, for otherwise we would have $A(\zeta)=\tau e^{s_\gamma}e^{-s_\gamma}=\tau$, contradicting the fact that $\tau\not\in\sigma(\Sigma)$. Finally, notice that the critical value of $\gamma$ is
\begin{align*}
 \Phi_H(\gamma) = \dot\phi(s_\gamma) - \phi(s_\gamma).
\end{align*}
By~\eqref{e:monotonicity_phi}, the function $s\mapsto\dot\phi(s)-\phi(s)$ is strictly monotone increasing on $\big(\tfrac1k,\tfrac2k\big]$, and therefore $s_\gamma$ only depends on the critical value $\Phi_H(\gamma)$.
\end{proof}

From now on, we will assume that the action spectrum $\sigma(\Sigma)$ is discrete.

\begin{lem}
\label{l:filtering_orbits}
For each integer $m\geq4$ there exist 
\[b>(m+\tfrac12)\pi r^2,
\qquad
k\geq\max\{m,b_{m}^{-1}\}\] 
such that the Hamiltonian 
\begin{align}
\label{e:H_m}
 H_m:=H_{b,k,m} \in\HH_m(B)
\end{align}
has the following property: there exist
\begin{align*}
 s_i(H_m)\in (\tfrac1{k},\tfrac2{k}\big],
 \qquad
 \forall i\in\{0, \dotsc,n(m-1)\}
\end{align*}
such that every critical point $\gamma\in\crit(\Phi_{H_m})\cap\Phi_{H_m}^{-1}(c_i(H_m))$ lies on the energy hypersurface $\Sigma_{s_i(H_m)}$.
\end{lem}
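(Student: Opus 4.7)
The plan is to exploit two facts in combination with the discreteness hypothesis on $\sigma(\Sigma)$. First, the Ekeland--Hofer capacities $c_i(H_{b,k,m})$ converge to $c_i(\Sigma)$ as $b$ and $k$ grow in the allowed regime---a quantitative strengthening of the selector property~\eqref{e:EH_representation} implicit in \cite{Ekeland:1990aa}. Second, the critical values produced by Lemma~\ref{l:periodic_orbits_H_R} from the two pieces correspond to Reeb periods of $(\Sigma,\lambda)$ shifted by approximately $0$ and by $-b$, respectively. Together, these will let us choose $b$ and $k$ so that each capacity $c_i(H_m)$, for $i\leq n(m-1)$, is strictly closer to a first-piece critical value than to any second-piece one, forcing $s_i(H_m)\in(\tfrac{1}{k},\tfrac{2}{k}]$.

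Concretely, the formulas in the proof of Lemma~\ref{l:periodic_orbits_H_R} show that a critical point at $s\in(\tfrac{1}{k},\tfrac{2}{k}]$ with Reeb period $T=\dot\phi(s)e^{-s}\in\sigma(\Sigma)$ has critical value $Te^{s}-\phi(s)\in[T-\tfrac{1}{2k},\,Te^{2/k}]$, whereas a critical point at $s\in[\tfrac{3}{k},\tfrac{4}{k})$ with Reeb period $T'\in\sigma(\Sigma)$ has critical value within $O(\tfrac{1}{k})$ of $T'-b$. Given $\varepsilon>0$ small, I would first use the discreteness of $\sigma(\Sigma)$ to pick $b>(m+\tfrac{1}{2})\pi r^2$ so that, for each $i\in\{0,\dotsc,n(m-1)\}$, the interval $(c_i(\Sigma)+b-4\varepsilon,\,c_i(\Sigma)+b+4\varepsilon)$ is disjoint from $\sigma(\Sigma)$: the bad $b$'s form a finite union of $4\varepsilon$-thickenings of translates of $\sigma(\Sigma)$, and for $\varepsilon$ smaller than half the local gap of $\sigma(\Sigma)$ around the relevant spectral values, the complement meets every sufficiently large half-line. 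Then I would take $k$ large enough that $|c_i(H_{b,k,m})-c_i(\Sigma)|<\varepsilon$ and the $O(\tfrac{1}{k})$ corrections above are below $\varepsilon$.

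With these choices, each $c_i(H_m)$ is within $2\varepsilon$ of the first-piece critical value associated to $T=c_i(\Sigma)$, but at distance at least $2\varepsilon$ from every second-piece critical value; by the uniqueness assertion in Lemma~\ref{l:periodic_orbits_H_R}, this forces $s_i(H_m)\in(\tfrac{1}{k},\tfrac{2}{k}]$, as required.

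The main obstacle is the quantitative convergence $c_i(H_{b,k,m})\to c_i(\Sigma)$ uniform in $i\leq n(m-1)$. The lower bound $c_i(H_m)\geq c_i(B)=c_i(\Sigma)$ is immediate from the definition of the capacity as an infimum over $\HH(B)$; the upper bound instead requires constructing explicit Ekeland--Hofer min-max families of index $\geq i$, built from closed Reeb orbits of $(\Sigma,\lambda)$ suitably embedded into $H^{1/2}(S^1,\R^{2n})$, in the spirit of \cite{Ekeland:1990aa}. The discreteness of $\sigma(\Sigma)$ enters both in the above genericity argument for $b$ and in localizing this min-max analysis near each isolated spectral value $c_i(\Sigma)$.
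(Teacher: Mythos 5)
Your overall strategy is on the right track---pick $b$ so that the ``second-piece'' critical values (which sit near $T'-b$ with $T'\in\sigma(\Sigma)$) cannot coincide with $c_i(H_m)$, and conclude that critical points lie on the inner shells. That is exactly the role played in the paper by the final choice of $b$ with $\overline c_i+b\notin\sigma(\Sigma)$. But there is a genuine gap in the way you reach that stage.

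You lean on the claimed convergence $c_i(H_{b,k,m})\to c_i(\Sigma)$ as $b,k\to\infty$ with $m$ \emph{fixed}, and you yourself flag the upper bound as the ``main obstacle.'' The paper deliberately avoids ever needing this claim, and for good reason: for fixed $m$ the infimum $\overline c_i:=\lim_{b\to\infty}\lim_{k\to\infty}c_i(H_{b,k,m})$ ranges over $\HH_m(B)$ only, and it is neither obvious nor used anywhere in the proof of this lemma that $\overline c_i$ equals the full capacity $c_i(\Sigma)$ (the equality $\lim_m c_i(H_m)=c_i(\Sigma)$ is only invoked later, in the proof of Theorem~\ref{t:EH_Besse}, and there it is the $m\to\infty$ limit that matters). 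The paper's route instead exploits the monotonicity of $H_{b,k,m}$ in $k$ to show $c_i(b):=\lim_{k\to\infty}c_i(H_{b,k,m})$ exists, that $b\mapsto c_i(b)$ is continuous and non-increasing, and then uses discreteness of $\sigma(\Sigma)$ in a subtler way: since either $c_i(b)\in\sigma(\Sigma)$ or $c_i(b)+b\in\sigma(\Sigma)$, and the continuous, unbounded function $b\mapsto c_i(b)+b$ cannot remain in a discrete set, one concludes that $c_i(b)$ is eventually constant and equal to some $\overline c_i\in\sigma(\Sigma)$. Only then is $b$ chosen so that $\overline c_i+b\notin\sigma(\Sigma)$. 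That stabilization argument is the missing engine your sketch would need; replacing it by the unproven convergence to $c_i(\Sigma)$ is not a shortcut but an open claim.

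A secondary issue is your genericity argument for $b$. Discreteness of $\sigma(\Sigma)$ guarantees no accumulation points, but not a uniform lower bound on the gaps, so ``for $\varepsilon$ smaller than half the local gap'' is not available uniformly across the finitely many translates $c_i(\Sigma)+b$: you would need $\varepsilon$ to shrink as $b$ grows, which undercuts the fixed-$\varepsilon$ bookkeeping in your estimates. The paper sidesteps this by first identifying the fixed numbers $\overline c_i$ and only then choosing one $b$ with $\overline c_i+b\notin\sigma(\Sigma)$ for all $i$, which is possible because a finite union of translates of a nowhere dense set is nowhere dense.
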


\begin{proof}
We fix an integer $m\geq4$, and we remove it from the notation as much as possible.
Let $\phi_{b,k}$ be the function entering the definition of $H_{b,k,m}$ as in~\eqref{e:Hamiltonian}.
By Lemma~\ref{l:periodic_orbits_H_R}, there exists 
\[
s=s_{i,b,k}\in(\tfrac1k,\tfrac2k\big]\cup [\tfrac3k,\tfrac4k\big)\] 
such that every critical point
\begin{align*}
\gamma=\gamma_{i,b,k}\in\crit(\Phi_{H_{b,k,m}})\cap\Phi_{H_{b,k,m}}^{-1}(c_i(H_{b,k,m}))
\end{align*}
lies on the energy hypersurface $\Sigma_{s}$, and the curve 
\[\zeta=\zeta_{i,b,k}\] given by
$\zeta(t) = 
\phi_V^{-s}(\gamma(te^s/\dot\phi_{b,k}(s)))$
is a closed Reeb orbit of $(\Sigma,\lambda)$ with period $A(\zeta)=\dot\phi_{b,k}(s)e^{-s}$. Equation~\eqref{e:c_phi}, together with $A(\gamma)=e^s A(\zeta)$, implies that
\begin{align}
\label{e:symplectic_action_bounds}
A(\zeta) - e^{-s} c_i(H_{b,k,m}) = e^{-s} \phi_{b,k}(s) 
\in
\big[0,e^{-s}\tfrac1{2k}\big] \cup \big[ e^{-s}(b-\tfrac1{k}),e^{-s}b\big].
\end{align}

Notice that, if we fix an integer $k_1\geq\max\{4,b^{-1}\}$, for all integers $k_2>k_1$ sufficiently large we have $H_{b,k_1,m}<H_{b,k_2,m}$ pointwise. This implies that the limit 
\begin{align*}
c_i(b):= \lim_{k\to\infty} c_i(H_{b,k,m})
\end{align*}
exists and is finite and bounded from below by the Ekeland--Hofer capacity $c_i(B)$. Moreover, for each $s\in(0,1]$, we have that $H_{b,k,m}|_{\Sigma_s}\to b$ as $k\to\infty$, and therefore $c_i(b)$ can also be characterized by
\begin{align*}
c_i(b) = \inf\left\{ c_i(H)\  \left|\ H\in\HH_m(B),\ \max_{B^{2n}(r)}H\leq b \right.\right\}.
\end{align*}
In particular, $b\mapsto c_i(b)$ is continuous and non increasing. Set
\begin{align*}
 \overline c_i:=\lim_{b\to\infty} c_i(b).
\end{align*}
The uniform bound on $c_i(b)$, together with~\eqref{e:symplectic_action_bounds}, implies that the limits
\begin{align*}
a_{i,b}':=\liminf_{k\to\infty} A(\zeta_{i,b,k})\leq
\limsup_{k\to\infty} A(\zeta_{i,b,k})=:a_{i,b}''
\end{align*}
are both finite, and 
\begin{align}
\label{e:lim_inf_sup}
 a_{i,b}',a_{i,b}''\in\{c_i(b),c_i(b)+b\}\cap\sigma(\Sigma).
\end{align}

We claim that $c_i(b)\in\sigma(\Sigma)$ for all $b$ large enough. Indeed, arguing by contradiction assume that $c_i(b)\not\in\sigma(\Sigma)$ for arbitrarily large $b$. Since $\sigma(\Sigma)$ is assumed to be discrete and since $c_i(b)$ converges monotonically to $c_i$ as $b\to\infty$, we indeed have that $c_i(b)\not\in\sigma(\Sigma)$ for all $b$ large enough, say for $b\geq\overline b$. This, together with~\eqref{e:lim_inf_sup}, implies that $a_{i,b}''=c_i(b)+b$ for all $b\geq\overline b$. In particular $c_i(b)+b\in\sigma(\Sigma)$ for all $b\geq\overline b$. Since the action spectrum $\sigma(\Sigma)$  is discrete, the continuous function $b\mapsto c_i(b)+b$ should be constant for $b\geq\overline b$. However, we already know that $c_i(b)+b\to\infty$ as $b\to\infty$. This contradiction proves the desired claim. 

Since $b\mapsto c_i(b)$ is continuous and takes values inside the discrete action spectrum $\sigma(\Sigma)$ for all $b$ large enough, say for $b\geq\overline b$, we have that 
\[c_i(b)=\overline c_i,\qquad\forall b\geq\overline b.\] 
Once again, thanks to the discreteness of the action spectrum $\sigma(\Sigma)$, we can fix an arbitrarily large $b>(m+\tfrac12)$ such that
\begin{align*}
\overline c_i+b\not\in\sigma(\Sigma),\qquad\forall i=0, \dotsc,n(m-1),
\end{align*}
which implies 
$a_{i,b}'=a_{i,b}''=c_i(b)$. Therefore, for this value of $b$ and for all integers $k$ large enough, Equation~\eqref{e:symplectic_action_bounds} implies that
\[e^{-s_{i,b,k}} \phi_{b,k}(s_{i,b,k}) 
\in
\big[0,e^{-s_{i,b,k}}\tfrac1{2k}\big],\] 
and thus that $s_{i,b,k}\in(\tfrac1k,\tfrac2k\big]$.
\end{proof}

\begin{proof}[Proof of Theorem~\ref{t:EH_Besse}]
We consider the sequence of Hamiltonians $H_m\in\HH_m(B)$ provided in~\eqref{e:H_m}, which satisfies
\begin{align*}
c_i(B)=c_i(\Sigma)=\lim_{m\to\infty} c_i(H_m).
\end{align*}
We denote by $\phi_m$ the function that enters the definition of $H_m$ as in~\eqref{e:Hamiltonian}, so that $H_m|_{\Sigma_s}=\phi_m(s)$ for all $s\in[0,1]$. We set $\tilde m:=1+(i+n-1)n^{-1}$, so that for every integer $m\geq\tilde m$ we have $i+n-1\leq n(m-1)$ and $c_{i+n-1}(H_m)<\infty$.

By Lemmas~\ref{l:periodic_orbits_H_R} and~\ref{l:filtering_orbits}, for each $m\geq \tilde m$ and $j\in\{0, \dotsc,i+n-1\}$ there exists 
\[s_{m,j}:=s_j(H_m)\in(0,\tfrac2m)\] 
such that every critical point $\gamma\in\crit(\Phi_H)\cap\Phi_H^{-1}(c_j(H_m))$ lies on the energy hypersurface $\Sigma_{s_{m,j}}$ and 
\[
\tau_{m,j}
:=
e^{-s_{m,j}} \dot\phi_m(s_{m,j})
=
e^{-s_{m,j}}(c_j(H_m)+\phi_m(s_{m,j}))
\in\sigma(\Sigma).
\]
Since $0<\phi_m(s_{m,j})\leq\phi_m(\tfrac2m)<\tfrac1{2m}$, we have the limit
\begin{align*}
 \lim_{m\to\infty} \tau_{m,j}=c_j(\Sigma).
\end{align*}
Due to our assumption that the action spectrum $\sigma(\Sigma)$ is discrete, this limit implies that the sequence $\tau_{m,j}$ must stabilize for large $m$ and therefore
\begin{align*}
 \tau_{m,j}=c_j(\Sigma),\qquad\forall m\geq m_j.
\end{align*}
By Equation~\eqref{e:monotonicity_phi}, $\tau_{m,j}$ uniquely determines $s_{m,j}$. Therefore, if we fix an integer $m\geq\max\{m_i,m_{i+n-1}\}$, since 
\[
\tau:=\tau_{m,i}=c_i(\Sigma)=c_{i+n-1}(\Sigma)=\tau_{m,i+n-1},
\] 
we have $s:=s_{m,i}=s_{m,i+n-1}$ and
\begin{align*}
c_i(H_m) 
=
e^{-s} (\dot\phi_m(s)-\phi_m(s))
=
c_{i+n-1}(H_m).
\end{align*}

We now proceed as in the proof of Lemma~\ref{l:neighborhoods}, by choosing a Riemannian metric $g$ on $\Sigma$ such that the Reeb orbits on $\Sigma$ are unit speed geodesics. Consider the compact subset $K:=\fix(\phi_R^\tau)\subset\Sigma$. For each $z\in K$, we denote the corresponding $\tau$-periodic Reeb orbit by $\zeta_z(t)=\phi_R^t(z)$ and set
\begin{align*}
\KK:=\{\zeta_z\ |\ z\in K \}\subset C^{\infty}(\R/\tau\Z,\Sigma).
\end{align*}
As in the proof of Lemma~\ref{l:neighborhoods}, for each open neighborhood $W\subseteq\Sigma$ of $K$, we can find an open neighborhood $\NN\subset H^{1/2}(\R/\tau\Z,\R^{2n})$ of $\KK$ which is homotopy equivalent to $W$. Picking $W$ small, we can make $\NN$ an arbitrarily small neighborhood of $\KK$.

To finish the proof, assume now to the contrary that $\Sigma$ is not Besse, so that $K\neq\Sigma$. Choosing $W\neq\Sigma$, we then obtain 
\[H^{*\geq 2n-1}(\NN;\Q)\cong H^{*\geq 2n-1}(W;\Q)=0.\]
The open subset 
\begin{align*}
\NN_m:= \big\{ \phi_V^{s}(\zeta(\tau \cdot))\ \big|\ \zeta\in\NN \big\}\subset H^{1/2}(S^1,\R^{2n})
\end{align*}
is an arbitrarily small open neighborhood of  $\KK_m:=\crit(\Phi_{H_m})\cap\Phi_{H_m}^{-1}(c_i(H_m))$ with 
\[H^{*\geq 2n-1}(\NN_m;\Q)\cong H^{*\geq 2n-1}(\NN;\Q)=0.\]
By Lemma~\ref{l:bound_FR}, $\KK_m$ admits an $S^1$-invariant neighborhood $\UU\subset H^{1/2}(S^1,\R^{2n})$ with $\fr(\UU)<n-1$. This, together with Lemma~\ref{l:LS_EH}, contradicts the equality $c_i(H_m)=c_{i+n-1}(H_m)$.
\end{proof}

\section{Geodesic flows}
\label{s:geodesic_flows}

\subsection{Equivariant spectral invariants}
\label{ss:geodesic_flows_equivariant_spectral_invariants}

Let $(M,g)$ be a closed Riemannian manifold of dimension $n\geq 2$. Its closed geodesics, parametrized in order to have constant speed and period $1$, are the critical points with positive critical value of the energy functional
\begin{align*}
E:\Lambda M\to[0,\infty),
\qquad
E(\gamma)=\int_{S^1} \|\dot\gamma(t)\|_g^2\,\diff t,
\end{align*}
where $\Lambda M=W^{1,2}(S^1,M)$ and $S^1=\R/\Z$. Once again, we are in an equivariant setting: the circle $S^1$ acts on $\Lambda M$ by time-shift
\begin{align*}
 t\cdot\gamma=\gamma(t+\cdot),
 \qquad
 \forall t\in S^1,\ \gamma\in\Lambda M,
\end{align*}
and the energy $E$ is invariant under this action. The subspace of constant curves $E^{-1}(0)$, which we identify with $M$ with a common abuse of notation, is the set of fix points of this action. Every other critical point $\gamma\in\crit(E)\cap E^{-1}(0,\infty)$ thus belongs to an embedded critical circle $S^1\cdot\gamma\subset\Lambda M$. Moreover, for every integer $m\geq1$, the $m$-th iterate $\gamma^m\in\Lambda M$, which is defined by $\gamma^m(t)=\gamma(mt)$, is also a critical point of $E$ with critical value $E(\gamma^m)=m^2 E(\gamma)$. Therefore, every oriented closed geodesic of $(M,g)$ gives rise to the countable sequence of critical circles $S^1\cdot\gamma^m$ of $E$.

The energy functional $E$ satisfies all the commonly desired assumptions from critical point theory: it is non-negative, smooth, and satisfies the Palais--Smale condition; see e.g.~\cite{Klingenberg:1978so}. As usual, we denote the energy sublevel sets by $\Lambda M^{<b}:=\{E<b\}$, and by $\iota_b:(\Lambda M^{<b},M)\hookrightarrow(\Lambda M,M)$ the inclusion. Every non-zero cohomology class $\mu\in H^*_{S^1}(\Lambda M,\Lambda M;\Q)$ defines an equivariant spectral invariant  
\begin{align*}
c_g(\mu):=\inf\big\{ \sqrt b>0\ \big|\ \iota_b^*\mu\neq0 \big\}\in(0,\infty)
\end{align*}
which is the square root of a positive critical value of $E$, that is, the period of a unit-speed closed geodesic of $(M,g)$.

\subsection{A sufficient condition for the Besse property}

Once again, one of the ingredients for the proof of Theorem~\ref{t:geodesics_Zoll} is the Fadell--Rabinowitz index (see Section~\ref{s:Fadell_Rabinowitz}), which in the context of closed geodesics was first investigated by Rademacher \cite{Rademacher:1994aa}.
We will need the following finiteness property of the index, which is certainly well known to the experts, and holds in a rather general setting.
\begin{lem}
\label{l:finite_FR}
Let $(X,g)$ be a Hilbert manifold equipped with a continuous $S^1$ action such that, for each $t\in S^1$, the action map $\rho_t:X\to X$, $\rho_t(x)=t\cdot x$ is a smooth isometry. Let $K\subset X$ a compact $S^1$-invariant subset such that, for each $x\in K$, the curve $\gamma_x:S^1\to K$, $\gamma_x(t)=\rho_t(x)$ is a smooth immersion. Then, $K$ has an $S^1$-invariant neighborhood $U\subseteq X$ with finite Fadell--Rabinowitz index $\fr(U)<\infty$.
\end{lem}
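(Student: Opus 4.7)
The approach relies on the slice theorem for proper isometric $S^1$-actions on Hilbert manifolds. The immersion hypothesis forces the stabilizer $\Gamma_x \subset S^1$ of each $x \in K$ to be a finite cyclic subgroup---otherwise $\gamma_x$ would be constant---so the $S^1$-action is locally free on $K$. Since the action is smooth and isometric, it is proper, and the exponential map of $g$ yields, for each $x \in K$, a $\Gamma_x$-invariant open disc $V_x$ in the normal space $N_x$ to the orbit, together with an $S^1$-equivariant open embedding
\[
\phi_x \colon S^1 \times_{\Gamma_x} V_x \hookrightarrow X,
\]
whose image $T_x$ is an $S^1$-invariant open neighborhood of $S^1 \cdot x$.

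By compactness of $K$, finitely many such sets $T_1, \dots, T_N$ cover $K$; set $U := T_1 \cup \cdots \cup T_N$, which is an $S^1$-invariant open neighborhood of $K$. For each $T_i$, a direct manipulation of quotients shows that the Borel construction satisfies
\[
T_i \times_{S^1} ES^1 \;\cong\; V_i \times_{\Gamma_i} ES^1,
\]
and this space is homotopy equivalent to the classifying space $B\Gamma_i$ because $V_i$ is contractible. Since $\Gamma_i$ is finite and we use rational coefficients, $H^*(B\Gamma_i;\Q)$ is concentrated in degree zero; in particular $H^{2}_{S^1}(T_i;\Q)=0$, so $e_{T_i}=0$ and $\fr(T_i)=0$. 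Applying the subadditivity property~\eqref{e:subadditivity_FR} inductively then yields
\[
\fr(U) \leq N - 1 < \infty.
\]

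The main technical point is the existence of the equivariant tubular neighborhoods $T_x$: one needs the slice theorem for proper actions of compact Lie groups on Hilbert manifolds endowed with an invariant Riemannian metric, which is standard but deserves care in the infinite-dimensional setting (one uses the orthogonal splitting of $T_x X$ into the tangent direction to the orbit and its normal complement, together with the $g$-exponential map). Once this is in place, the remaining steps---the cohomological computation using the contractibility of the fibers and the rational acyclicity of $B\Gamma_i$ for finite $\Gamma_i$, followed by the finite-cover argument---are routine.
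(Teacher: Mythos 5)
Your proof is correct and follows essentially the same strategy as the paper's: build $S^1$-invariant tubular neighborhoods of single orbits via the equivariant exponential map, observe each has Fadell--Rabinowitz index zero (you identify the Borel construction as $B\Gamma_i$, the paper notes the neighborhood retracts onto the orbit whose quotient is a point; these are the same computation), then cover $K$ by finitely many such neighborhoods and conclude by subadditivity. The only cosmetic difference is that you phrase the tubular-neighborhood step as an invocation of the slice theorem for the twisted product $S^1\times_{\Gamma_x}V_x$, whereas the paper constructs it directly from the normal bundle of the embedded circle $C_x$.
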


\begin{proof}
Since every $\rho_t$ is an isometry, the exponential map $\exp$ of the Hilbert manifold $(X,g)$ is $S^1$-equivariant, i.e.,
\[\exp_{t\cdot x}(t\cdot v)=t\cdot\exp_x(v).\] 
Since the $S^1$ action on $K$ is locally free, for each $x\in K$ there exists $\tau=\tau_x\in(0,1]$ such that $\tau\cdot x=x$ and $t\cdot x\neq x$ for all $t\in(0,\tau)$. The $S^1$-orbit 
\[C_x:=S^1\cdot x=\gamma_x([0,\tau])\subseteq K\] 
is a smooth embedded circle in $X$. Consider the normal bundle $N C_x$ and, for each $t\in\R/\tau\Z$, the open ball $B_t\subset N_{t\cdot x}C_{x}$ of radius $\epsilon>0$ around the origin. Notice that $s\cdot B_t=B_{s+t}$ for all $s\in S^1$. The union 
\[B:=\bigcup_{t\in\R/\tau\Z}B_t\]  
forms the open neighborhood of radius $\epsilon$ of the zero-section of the normal bundle $NC_x$. We require $\epsilon>0$ to be small enough so that the exponential map provides a diffeomorphism onto its image
\begin{align*}
\psi:B\to X,
\qquad
\psi(v)=\exp_{t\cdot x}(v),
\quad\forall v\in B_{t}.
\end{align*}
Namely, $U_x:=\psi(B)$ is an $S^1$-invariant tubular neighborhood of $C_x$, which admits an $S^1$-invariant deformation retraction
\begin{align*}
r_s: U_x\to U_x,
\qquad
r_s(\exp_{t\cdot x}(v))=\exp_{t\cdot x}((1-s)v)
\end{align*}
such that $r_0=\id$ and $r_1$ is a retraction onto $C_x$. This implies that 
\begin{align*}
H^{*}_{S^1}(U_x)\cong H^*_{S^1}(C_x)\cong H^*(C_x/S^1)\cong H^*(\mathrm{pt}),
\end{align*}
and in particular $\fr(U_x)=0$. Since $K$ is compact, there exists a finite collection $x_1,\dotsc, x_r\in K$ such that $U:=U_{x_1}\cup \dotsc \cup U_{x_r}$ is an open neighborhood of $K$. By the subadditivity of the Fadell Rabinowitz index (Equation~\eqref{e:subadditivity_FR}), we conclude
\[
\fr(U)\leq\fr(U_1)+1+\fr(U_2)+1+ \dotsc +\fr(U_r)+1=r.
\qedhere
\]
\end{proof}

The following lemma provides a sufficient condition for a Riemannian metric to be Besse and is an equivariant analogue of \cite[Lemma~5.2]{Mazzucchelli:2018pb}, although its proof is somewhat different. We will employ the notation of Section~\ref{s:Fadell_Rabinowitz} and denote by $e_{\Lambda M}\in H^2_{S^1}(\Lambda M;\Q)$ the Euler class of the circle bundle $\Lambda M\times ES^1\to\Lambda M\times_{S^1} ES^1$.

\begin{lem}
\label{l:Besse}
Let $\mu\in H^*_{S^1}(\Lambda M,M;\Q)$ be a cohomology class such that 
$\mu\smile e_{\Lambda M}^{n-1}\neq0$ in $H^{*}_{S^1}(\Lambda M,M;\Q)$. 
If $\ell:=c_g(\mu)=c_g(\mu\smile e_{\Lambda M}^{n-1})$, then $g$ is Besse and $\ell$ is a common period for its unit-speed closed geodesics.
\end{lem}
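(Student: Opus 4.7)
The plan is to mirror the strategy of Lemma~\ref{l:Besse_convex_1} adapted to the loop space setting, combining the equivariant Lusternik--Schnirelmann principle for the energy functional with a cohomological analysis of small neighborhoods of the critical set at level $\ell^2$. Set $K:=\crit(E)\cap E^{-1}(\ell^2)\subset\Lambda M$, and argue by contradiction, assuming that $g$ is not Besse with $\ell$ as a common period, i.e.\ $F:=\fix(\phi_R^\ell)\subsetneq SM$, where $\phi_R^t$ is the geodesic flow on $SM$.

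First, I would extract the key Lusternik--Schnirelmann inequality from the hypothesis $c_g(\mu)=c_g(\mu\smile e_{\Lambda M}^{n-1})=\ell$: every $S^1$-invariant open neighborhood $\UU\subseteq\Lambda M$ of $K$ has $\fr(\UU)\geq n-1$. The standard deformation argument applies: if $e_{\Lambda M}^{n-1}|_\UU=0$, then the anti-gradient flow of $E$ (which satisfies the Palais--Smale condition) yields an $S^1$-equivariant deformation of $\Lambda M^{<\ell^2+\varepsilon}$ into $\Lambda M^{<\ell^2-\varepsilon}\cup\UU$; combined with $\mu|_{\Lambda M^{<\ell^2-\varepsilon}}=0$ and $e_{\Lambda M}^{n-1}|_\UU=0$, a Mayer--Vietoris argument gives $(\mu\smile e_{\Lambda M}^{n-1})|_{\Lambda M^{<\ell^2+\varepsilon}}=0$, contradicting $c_g(\mu\smile e_{\Lambda M}^{n-1})=\ell$.

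Next, I would identify $K$ with $F$ via the $S^1$-equivariant homeomorphism $F\to K$, $z\mapsto\gamma_z$ with $\gamma_z(t):=\pi\circ\phi_R^{\ell t}(z)$, where $\pi:SM\to M$ is the projection and $S^1$ acts on $F$ by the time-rescaled geodesic flow. Mimicking the construction in the proof of Lemma~\ref{l:neighborhoods}, for every sufficiently small $S^1$-invariant open neighborhood $W\subseteq SM$ of $F$, the assignment $z\mapsto\tilde\gamma_z$, where $\tilde\gamma_z$ follows the geodesic from $z$ for time $\ell-\varepsilon$ and closes by the unique short minimizing geodesic, is a smooth $S^1$-equivariant embedding $W\hookrightarrow\Lambda M$; a tubular neighborhood $\NN\subset\Lambda M$ of its image is $S^1$-invariant, homotopy equivalent to $W$, and can be shrunk to an arbitrarily small neighborhood of $K$. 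Since $F\subsetneq SM$ and $SM$ is a connected oriented manifold of dimension $2n-1$, $W$ can be chosen so that every connected component is noncompact; Poincar\'e duality then gives $H^{2n-1}(W;\Q)=0$, hence $H^{2p+1}(\NN;\Q)\cong H^{2p+1}(W;\Q)=0$ for every $p\geq n-1$ (the higher-degree vanishing being automatic by dimension).

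Finally, Lemma~\ref{l:finite_FR} provides an $S^1$-invariant open neighborhood $X\subseteq\Lambda M$ of $K$ with $\fr(X)<\infty$; applying Lemma~\ref{l:bound_FR} to $X$ with $q=n-2$, I obtain an $S^1$-invariant neighborhood $U\subseteq X$ of $K$ with $\fr(U)\leq n-2$. Since $U$ is open in $\Lambda M$, this contradicts the Lusternik--Schnirelmann lower bound $\fr(U)\geq n-1$, proving $F=SM$ and completing the argument. The main technical point, analogous to Lemma~\ref{l:neighborhoods}, is the $S^1$-equivariant tubular neighborhood construction, since the points of $F$ may lie on iterates of different underlying primitive geodesics; however, this is handled exactly as in the convex case by choosing $\varepsilon$ smaller than the injectivity radius scale and using that the Reeb orbits are unit-speed geodesics of a compatible metric on $SM$ (cf.\ the proof of Lemma~\ref{l:neighborhoods}).
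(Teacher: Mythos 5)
Your proposal is correct and follows essentially the same strategy as the paper's own proof: extract the lower bound $\fr(\UU)\geq n-1$ from the equality of spectral values via Lusternik--Schnirelmann, build arbitrarily small tubular neighborhoods of the critical set mimicking the construction of Lemma~\ref{l:neighborhoods}, use $\fix(\phi^\ell)\subsetneq SM$ (hence vanishing of top cohomology of $W$) together with Lemmas~\ref{l:finite_FR} and~\ref{l:bound_FR} to produce an $S^1$-invariant neighborhood with $\fr<n-1$, and conclude by contradiction. The only cosmetic difference is that you spell out the deformation/Mayer--Vietoris argument behind the Lusternik--Schnirelmann inequality and the Poincar\'e-duality reason for $H^{2n-1}(W;\Q)=0$, both of which the paper simply cites or leaves implicit.
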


\begin{proof}
The proof is analogous to the one of Lemma~\ref{l:neighborhoods}, but since the current setting is different we provide the argument in full details. 
We denote by $\phi_t:SM\to SM$ the geodesic flow of $(M,g)$, and set 
\[
\KK:=\crit(E)\cap E^{-1}(\ell^2),
\qquad 
K:= \big\{ (\gamma(0),\dot\gamma(0))\in SM\ \big|\ \gamma\in\KK \big\}.
\] 
Notice that $K$ is invariant under the geodesic flow and 
$\phi_{\ell}|_{K}=\id$. 
The free loop space $\Lambda M$ and its $S^1$ action satisfy the assumptions of Lemma~\ref{l:finite_FR}, which implies that $\KK$ admits an $S^1$-invariant open neighborhood $\XX\subset \Lambda M$ with finite Fadell--Rabinowitz index $\fr(\XX)<\infty$.

Let us assume that $(M,g)$ has at least one geodesic that is not closed, or is closed but not with period $\ell$. Therefore, $SM\setminus K\neq\varnothing$. 
We fix a constant $\epsilon\in(0,\injrad(M,g))$, and an open neighborhood $W\subsetneq SM$ of $K$ which is small enough so that $\dist(\exp_x((\ell-\epsilon)v,x)<\injrad(M,g)$. For each $z=(x,v)\in W$, we set 
$x':=\exp_{x}((\ell-\epsilon)v)$, $v':=\epsilon^{-1}\exp_{x'}^{-1}(x)$, and define the smooth embedding
\begin{align*}
\iota:W\hookrightarrow\Lambda M,
\qquad
\iota(z)(t)
:=
\gamma_{z}(t)
=
\left\{
  \begin{array}{@{}ll}
    \exp_x(tv), & \mbox{if }t\in[0,\ell-\epsilon], \\ 
    \exp_{x'}((t-\ell)v') & \mbox{if }t\in[\ell-\epsilon,\ell]. 
  \end{array}
\right.
\end{align*}
We consider a tubular neighborhood $\WW\subset \Lambda M$ of $\iota(W)$, so that $\iota:W\hookrightarrow\WW$ is a homotopy equivalence, and in particular 
\begin{align*}
H^{*\geq2n-1}(\WW;\Q) \cong H^{*\geq2n-1}(W;\Q) = 0.
\end{align*}
By shrinking $W$ and $\WW$, this latter tubular neighborhood can be made smaller than any given  neighborhood of $\KK$, and in particular contained in the above $S^1$-invariant neighborhood $\XX$ with finite Fadell--Rabinowitz index. We can now apply the abstract Lemma~\ref{l:neighborhoods}, which provides an $S^1$-invariant neighborhood $\UU$ of $\KK$ with Fadell--Rabinowitz index 
\begin{align}
\label{e:bound_fr_U_geodesics}
\ind(\UU)<n-1. 
\end{align}
If we had $\ell:=c_g(\mu)=c_g(\mu\smile e_{\Lambda M}^{n-1})$, the classical Lusternik-Schnirelmann theorem would imply that every $S^1$-invariant neighborhood $\UU\subset \Lambda M$ of $\KK$ has Fadell--Rabinowitz index $\fr(\UU)\geq n-1$, contradicting~\eqref{e:bound_fr_U_geodesics}.
\end{proof}

\subsection{Equivariant Morse theory of the energy functional}
\label{ss:Morse_theory}

Let $M$ be a simply connected Besse manifold of dimension $n\geq 2$ and let $g$ be a Besse Riemannian metric on $M$. The equivariant Morse theory of the associated energy function $E:\Lambda M\to[0,\infty)$ has been thoroughly investigated by Hingston \cite{Hingston:1984hj} and, specifically in the Besse case, by Radeschi and Wilking \cite{Radeschi:2017dz}. In this subsection, we recap those results from \cite{Radeschi:2017dz} which will be needed later on. 

The Besse condition on the Riemannian metric implies that $E$ is Morse-Bott, that is, the set of critical points $\crit(E)$ is a disjoint union of closed manifolds which are transversally non-degenerate. Moreover, $E$ is perfect for the $S^1$-equivariant rational cohomology relative to the constant loops $H^*_{S^1}(-,M;\Q)$, which means that, for all $0<a<b<c\leq\infty$, the inclusion induces the injective and, respectively, surjective homomorphisms
\begin{align*}
H^*_{S^1}(\Lambda M^{<c},\Lambda M^{<b};\Q)
\hookrightarrow
H^*_{S^1}(\Lambda M^{<c},\Lambda M^{<a};\Q)  
\twoheadrightarrow 
H^*_{S^1}(\Lambda M^{<b},\Lambda M^{<a};\Q).
\end{align*}

If $\KK$ is a connected component of $\crit(E)$, we denote by $\ind(\KK)$ its Morse index, and by
$\nul(\KK):=\dim(\ker(\diff^2E|_\KK))$ its nullity\footnote{In the closed geodesics literature, the nullity is often defined as $\dim(\ker(\diff^2E|_\KK))-1$, so that a non-degenerate closed geodesic has an associated critical circle with nullity 0.}. Both indices are well known to be finite; see e.g. \cite{Klingenberg:1978so}.
We assume that $M$ is spin, which implies that every critical manifold of $E$ has an orientable negative bundle, and therefore is homologically visible. This, together with the perfectness of $E$, implies
\begin{align}
\label{e:direct_sum}
H^*_{S^1}(\Lambda M,M;\Q)
\cong
\bigoplus_{\KK} H^{*-\ind(\KK)}(\KK/S^1;\Q),
\end{align}
where the direct sum runs over the space of all connected components $\KK\subset\crit(E)$. The integer
\begin{align*}
i(M):=\min\big\{\ind(\KK)\ \big|\ \KK\in\crit(E)\cap E^{-1}(0,\infty)\big\}
\end{align*}
turns out to be positive and independent of the choice of a Besse Riemannian metric on $M$; indeed, if $M_0$ is the model of $M$ (see Section~\ref{ss:geodesic_flows}), we have 
\begin{align}
\label{e:i(M)}
i(M)
=
i(M_0)
=
\left\{
  \begin{array}{@{}ll}
    n-1, & \mbox{if }M_0=S^n, \\ 
    1, & \mbox{if }M_0=\CP^{n/2}, \\ 
    3, & \mbox{if }M_0=\HP^{n/4}, \\
    7, & \mbox{if }M_0=\CaP^2.
  \end{array}
\right.
\end{align}

Since $g$ is a Besse Riemannian metric, Wadsley's theorem \cite{Wadsley:1975sp} implies that all unit speed closed geodesics have a minimal common period $\ell>0$. Therefore, the critical manifold $\KK:=\crit(E)\cap E^{-1}(\ell^2)$ is diffeomorphic to the unit tangent bundle $SM$ via the evaluation map $\gamma\mapsto(\gamma(0),\dot\gamma(0))$. We denote by $\KK^m:=\{\gamma^m\ |\ \gamma\in\KK\}$ the critical manifold containing the $m$-th iterates of the loops in $\KK$. Bott's index formula \cite{Bott:1956sp} for the critical manifolds $\KK^m$ becomes particularly simple:
\begin{align}
\label{e:Bott}
\ind(\KK^m) = m\,\ind(\KK) + (m-1)(n-1),
\qquad
\nul(\KK^m) = 2n-1.
\end{align}

\subsection{Spectral characterization of Zoll Riemannian metrics}\label{ss:spectral_characterization_Riemannian}
We now assume that the Riemannian metric $g$ is Zoll and that its unit-speed closed geodesics have minimal period $\ell>0$. The critical manifold of non-iterated closed geodesics  
\[\KK:=\crit(E)\cap E^{-1}(\ell^2)\]
is $S^1$-equivariantly homeomorphic to the unit tangent bundle $SM$ equipped with the $S^1$ action provided by the geodesic flow. Every other critical manifold of $E$ with positive critical value is of the form $\KK^m$, for $m\geq2$. Bott's formulas~\eqref{e:Bott} for the Morse indices reduce to
\[
\ind(\KK^m)=m\,i(M)+(m-1)(n-1),
\qquad
\nul(\KK^m)=2n-1
.
\] 
This, together with~\eqref{e:direct_sum}, implies that $H^{*}_{S^1}(\Lambda M,M;\Q)$ has the form~\eqref{e:rel_equiv_hom_loop_space}, and in particular
\begin{align}
\label{e:rank_one}
H^{i(M)}_{S^1}(\Lambda M,M;\Q)\cong H^0(\KK/S^1)\cong \Q.
\end{align}

Let $\NN_m\subset N^-\KK^m$ be the open neighborhood of radius $r>0$ of the zero-section in the negative normal bundle of $\KK^m$. Namely, for every $\gamma\in\KK$, the fiber $(\NN_m)_\gamma$ is the open ball of radius $r>0$ in the negative eigenspace of the Hessian $\diff^2E(\gamma)$. We assume that $r>0$ is small enough so that the exponential map of $\Lambda M$ (with respect to its canonical $S^1$-invariant Riemannian metric induced by $g$) is a diffeomorphism onto its image, and $\Exp
(\NN_m)\setminus\KK_m$ is contained in the sublevel set $\Lambda M^{<m^2\ell^2}$. From now on, with a slight abuse of notation we identify $\NN_m\equiv\Exp(\NN_m)$, and thus consider $\NN_m$ as a finite dimensional smooth submanifold of $\Lambda M$ containing $\KK^m$. If $0<\epsilon<\ell$, the inclusion induces the ring isomorphism 
\begin{align*}
H^*_{S^1}(\Lambda M^{< m^2(\ell+\epsilon)^2},\Lambda M^{< m^2\ell^2};\Q) 
& \cong
H^{*}_{S^1}(\NN_m,\NN_m\setminus\KK^m;\Q).
\end{align*}
Since the negative bundle $N^-\KK^m\to\KK^m$ is orientable, it has a non-zero Thom class with rational coefficients, which we can view as a relative cohomology class
\[\tau_m\in H^{\ind(\KK^m)}_{S^1}(\NN_m,\NN_m\setminus\KK^m;\Q),\] 
and we have a Thom isomorphism 
\begin{align*}
H^{*}_{S^1}(\NN_m;\Q) & \toup^{\cong} H^{*+\ind(\KK^m)}_{S^1}(\NN_m,\NN_m\setminus\KK^m;\Q),\\
\eta & \longmapsto \tau_m\smile\eta.
\end{align*}
The exponential map allows us to construct an $S^1$-equivariant deformation retraction of $\NN_m$ onto $\KK^m$, and, in particular, the inclusion induces the ring isomorphism 
\[H^*_{S^1}(\NN_m;\Q)\cong H^*_{S^1}(\KK^m;\Q).\] 
Since the $S^1$ action on $\KK^m$ is locally free, the projection 
\[\pr_1:\KK^m\times_{S^1} ES^1\to\KK^m/S^1,\qquad \pr_1(\gamma,e)=\gamma\] 
induces the ring isomorphism
\begin{align*}
\pr_1^*: H^*(\KK^m/S^1;\Q)\toup^{\cong} H^*_{S^1}(\KK^m;\Q).
\end{align*}
With a suitable orientation, this isomorphism maps the Euler class $e'$ of the circle bundle $\KK^m\to\KK^m/S^1$ to the Euler class $e_{\KK^m}$ of the circle bundle $\KK^m\times ES^1\to\KK^m\times_{S^1} ES^1$, i.e.,
\[e_{\KK^m}=\pr_1^*(e')\in H^2_{S^1}(\KK^m;\Q).\] It is well known that, in De Rham cohomology, $e'$ is represented by a symplectic form on $\KK^m/S^1$; see \cite{Boothby:1958ss}. In particular, $e_{\KK^m}^r\neq0$ if and only if $r\in\{0,...,n-1\}$, that is, $\NN_m$ has Fadell--Rabinowitz index 
\[\fr(\NN_m)=\fr(\KK^m)=n-1.\]

Since $E$ is perfect, the inclusion induces the surjective and, respectively, injective homomorphisms
\begin{align*}
 H^*_{S^1}(\NN_m,\NN_m\setminus\KK^m;\Q)
  \bssurjup^{i_m^*}
H^*_{S^1}(\Lambda M,\Lambda M^{< m^2\ell^2};\Q)
\eembup^{j_m^*}
 H^*_{S^1}(\Lambda M,M;\Q).
\end{align*}
In particular, there exists \[\nu_m\in H^{\ind(\KK^m)}_{S^1}(\Lambda M,\Lambda M^{< m^2\ell^2};\Q)\] 
such that $i_m^*(\nu_m)=\tau_m$ and $i_m^*(\nu_m\smile e_{\Lambda M}^{n-1})=\tau_m\smile e_{\NN^m}^{n-1}\neq0$. Therefore, we have
\begin{align*}
\alpha_m:=j_m^*(\nu_m)\neq0,\qquad 
\beta_m:=\alpha_m\smile e_{\Lambda M}^{n-1}=j_m^*(\nu_m\smile e_{\Lambda M}^{n-1})\neq0.
\end{align*}
Since $E$ is perfect, we readily see that $c_g(\alpha_m)= c_g(\beta_m)=m\ell$ for all integers $m\geq1$.

The following statement summarizes the discussion and proves that (iii) implies (ii) in Theorem~\ref{t:geodesics_Zoll}.

\begin{lem}
\label{l:Zoll_spectrum}
Let $M$ be a closed manifold of dimension at least $2$ admitting a Zoll Riemannian metric 
$g$. Then, $c_g(\alpha_m)=c_g(\beta_m)=m\ell$ for all $m\in \N$, where $\ell>0$ is the minimal period of the unit-speed closed geodesics of $(M,g)$.
\hfill\qed
\end{lem}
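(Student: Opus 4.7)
The plan is to derive the two equalities directly from the construction of $\alpha_m,\beta_m$ carried out just before the lemma together with the perfectness of the energy functional $E$ recorded in Section~\ref{ss:Morse_theory}; in effect, the present lemma is a formal verification of the assertion ``since $E$ is perfect, we readily see that $c_g(\alpha_m)=c_g(\beta_m)=m\ell$'' made at the end of that section. No new geometric input should be required.

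I would first dispatch the lower bounds $c_g(\alpha_m)\geq m\ell$ and $c_g(\beta_m)\geq m\ell$. By construction, both $\alpha_m=j_m^*(\nu_m)$ and $\beta_m=j_m^*(\nu_m\smile e_{\Lambda M}^{n-1})$ lie in the image of $j_m^*\colon H^*_{S^1}(\Lambda M,\Lambda M^{<m^2\ell^2};\Q)\to H^*_{S^1}(\Lambda M,M;\Q)$. For any $b\leq m^2\ell^2$, the inclusion of pairs $(\Lambda M^{<b},M)\hookrightarrow(\Lambda M,M)$ factors through $(\Lambda M^{<m^2\ell^2},M)$, so $\iota_b^*$ annihilates any such class by exactness of the triple $M\subset\Lambda M^{<m^2\ell^2}\subset\Lambda M$ (the composition of $j_m^*$ with the restriction to $(\Lambda M^{<m^2\ell^2},M)$ is the zero map). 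By the very definition of the spectral invariant, this immediately yields both lower bounds.

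For the upper bounds I would fix a value $b\in(m^2\ell^2,(m+1)^2\ell^2)$ and show $\iota_b^*\alpha_m\neq 0$ and $\iota_b^*\beta_m\neq 0$. Perfectness of $E$ splits the long exact sequences of the triples $M\subset\Lambda M^{<b}\subset\Lambda M$ and $\Lambda M^{<m^2\ell^2}\subset\Lambda M^{<b}\subset\Lambda M$ into short exact sequences; in particular $j_m^*$ is injective, and the natural commutative diagram relating the various restriction maps gives that $\iota_b^*\alpha_m=0$ if and only if $\nu_m$ lies in the kernel of the surjective restriction
\begin{align*}
\sigma\colon H^*_{S^1}(\Lambda M,\Lambda M^{<m^2\ell^2};\Q)\twoheadrightarrow H^*_{S^1}(\Lambda M^{<b},\Lambda M^{<m^2\ell^2};\Q).
\end{align*}
Since no critical level of $E$ lies in the interval $(m^2\ell^2,b]$, excision and the Morse--Bott neighborhood construction identify the target of $\sigma$ with $H^*_{S^1}(\NN_m,\NN_m\setminus\KK^m;\Q)$, under which $\sigma(\nu_m)$ corresponds to the Thom class $\tau_m$; this class is nonzero, so $\iota_b^*\alpha_m\neq 0$ and thus $c_g(\alpha_m)\leq\sqrt b$ for every admissible $b$, giving $c_g(\alpha_m)\leq m\ell$. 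The same argument applied to $\nu_m\smile e_{\Lambda M}^{n-1}$ produces the image $\tau_m\smile e_{\NN_m}^{n-1}$, which is nonzero because $\fr(\KK^m)=\fr(SM)=n-1$; this yields the upper bound for $\beta_m$.

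The only mildly delicate step will be the clean bookkeeping of the restriction and excision isomorphisms that identify $\sigma(\nu_m)$ (respectively $\sigma(\nu_m\smile e_{\Lambda M}^{n-1})$) with the Thom class $\tau_m$ (respectively with $\tau_m\smile e_{\NN_m}^{n-1}$). This is entirely standard for a perfect Morse--Bott functional whose critical manifolds have orientable negative bundles, orientability being guaranteed here by the spin assumption on $M$ together with the fact that $M$ admits a Zoll metric.
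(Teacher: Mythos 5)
Your proof is correct and coincides with the paper's intent: the paper gives no separate proof of this lemma (it is stated with an empty \verb|\qed|), merely referring to the discussion immediately preceding it, which ends with the one-line assertion that perfectness of $E$ yields $c_g(\alpha_m)=c_g(\beta_m)=m\ell$. Your argument is precisely the standard Morse--Bott bookkeeping that substantiates this sentence: the lower bound $c_g(\alpha_m),c_g(\beta_m)\geq m\ell$ from exactness of the triple $M\subset\Lambda M^{<m^2\ell^2}\subset\Lambda M$ together with the fact that both classes lie in $\operatorname{im} j_m^*$, and the upper bound from the excision identification of $H^*_{S^1}(\Lambda M^{<b},\Lambda M^{<m^2\ell^2};\Q)$ with $H^*_{S^1}(\NN_m,\NN_m\setminus\KK^m;\Q)$ for $b\in(m^2\ell^2,(m+1)^2\ell^2)$, under which $\nu_m$ and $\nu_m\smile e_{\Lambda M}^{n-1}$ restrict to the nonzero classes $\tau_m$ and $\tau_m\smile e_{\NN_m}^{n-1}$. (In particular, your choice $b<(m+1)^2\ell^2$ quietly corrects the paper's slightly too-generous range ``$0<\epsilon<\ell$'' for the excision step.)
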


\begin{proof}[Proof of Theorem~\ref{t:geodesics_Zoll}]
We are only left to prove that (i) implies (iii), and that (i') implies (iii) when $M=S^n$ with $n\neq3$. The argument is analogous to the one of \cite[pages~22-23]{Mazzucchelli:2018pb}. The equality $m\ell=c_g(\alpha_m)=c_g(\beta_m)$, together with $\beta_m=\alpha_m\smile e_{\Lambda M}^{n-1}$ and Lemma~\ref{l:Besse}, implies that $g$ is Besse and $m\ell$ is a common period for its unit-speed closed geodesics. If $M=S^n$ with $n\neq3$, then the Berger conjecture \cite{Gromoll:1981kl, Radeschi:2017dz} implies that $g$ is Zoll, and by Lemma~\ref{l:Zoll_spectrum} the common period of the closed geodesics must be $\ell=c_g(\alpha_1)^{1/2}$. 

Assume now that $M$ is a general spin Zoll manifold, and $\ell=c_g(\alpha_1)=c_g(\beta_1)$. We have to prove that $\ell^2$ is the minimal positive critical value of $E$. The critical set $\KK:=\crit(E)\cap E^{-1}(\ell^2)$ is diffeomorphic to the unit tangent bundle $SM$ via the diffeomorphism $\gamma\mapsto(\gamma(0),\dot\gamma(0))$. Since the degree of the cohomology class $\alpha_1$ is $i(M)$, we have $\ind(\KK)\leq i(M)$.  
Hence $\ind(\KK)=i(M)$ because $i(M)$ is the minimal Morse index of the closed geodesics of $(M,g)$. Now, assume by contradiction that $g$ is not Zoll and so there exists $m>1$ such that the compact set
\begin{align*}
\WW:=\crit(E)\cap E^{-1}(m^{-2}\ell^2)
\end{align*}
is non-empty.
Since $i(M)\leq\ind(\WW)\leq\ind(\KK)$, we must have $\ind(\WW)=i(M)$. Since $E$ is perfect, we have
\begin{align*}
\dim H^{i(M)}_{S^1}(\Lambda M,M;\Q)
\geq
\dim (H^0(\WW/S^1)) + \dim(H^0(\KK/S^1)) \geq 2,
\end{align*}
which contradicts~\eqref{e:rank_one}.
\end{proof}

\bibliography{_biblio}
\bibliographystyle{amsalpha}

\end{document}